\tikzset{node distance=2em, ch/.style={circle,draw,on chain,inner sep=2pt},chj/.style={ch,join},every path/.style={shorten >=4pt,shorten <=4pt},line width=1pt,baseline=-1ex}
\newtheorem{thm}{Theorem}
\newtheorem{lem}[thm]{Lemma}
\newtheorem{prop}[thm]{Proposition}
\newtheorem{conj}[thm]{Conjecture}
\newtheorem{cor}[thm]{Corollary}
\newtheorem{defe}[thm]{Definition}
\theoremstyle{remark}
\newtheorem{rem}[thm]{Remark}
\theoremstyle{definition}
\newcommand\myurl[1]{\url{#1}}
\newcommand{\nc}{\newcommand}
\nc{\ssec}{\subsection}
\nc{\on}{\operatorname}
\nc{\sE}{\mathscr{E}}
\nc{\sF}{\mathscr{F}}
\nc{\sL}{\mathscr{L}}
\nc{\sD}{\mathscr{D}}
\nc{\sA}{\mathscr{A}}
\nc{\cC}{\mathcal{C}}
\nc{\cG}{\mathcal{G}}
\nc{\cV}{\mathcal{V}}
\nc{\CB}{\mathcal{B}}
\nc{\cK}{{k(\!(s)\!)}}
\nc {\K}{\mathcal{K}}
\nc{\cE} {\mathcal{E}}
\nc{\Kl}{\mathrm{Kl}}
\nc{\ord}{\mathrm{ord}}
\nc{\cO}{\mathcal{O}}
\nc{\cF}{\mathcal{F}}
\nc{\cZ}{\mathcal{Z}}
\nc{\bcZ}{\overline{\mathcal{Z}}}
\nc{\bcB}{\overline{\mathcal{B}}}
\nc{\cD}{\mathcal{D}}
\nc{\cDt}{\mathcal{D}^\times}
\nc{\cH}{\mathcal{H}}
\nc{\bZ}{\mathbb{Z}}
\nc{\bQ}{\mathbb{Q}}
\nc{\bR}{\mathbb{R}}
\nc{\bC}{\mathbb{C}}
\nc{\bQl}{\overline{\mathbb{Q}}_\ell}
\nc{\bQlt}{\bQl^\times} 
\nc{\FG}{\mathrm{FG}}
\nc{\dR}{\mathrm{dR}}
\nc{\rs}{\mathrm{rs}}
\nc{\der}{\mathrm{der}}
\nc{\bj}{\bar{\mathfrak{j}}}
\nc{\uG}{\underline{G}}
\nc{\uc}{\underline{c}}
\nc{\uu}{\underline{u}}
\nc{\cU}{\mathcal{U}}
\nc{\cI}{\mathcal{I}}
\nc{\rat}{\mathrm{rat}}
\nc{\Hyp}{\mathrm{Hyp}}
\nc{\Lie}{\mathrm{Lie}}
\nc{\ctheta}{\check{\theta}}
\nc{\nil}{\mathrm{nil}}
\nc{\diff}{\mathrm{diff}}
\nc{\fF}{\mathfrak{F}}
\nc{\fB}{\mathfrak{B}}
\nc{\fZ}{\mathfrak{Z}}
\nc{\fx}{\mathfrak{x}}
\nc{\fy}{\mathfrak{y}}
\nc{\fb}{\mathfrak{b}}
\nc{\fk}{\mathfrak{k}}
\nc{\fI}{\mathfrak{i}}
\nc{\fj}{\mathfrak{j}}
\nc{\fg}{\mathfrak{g}}
\nc{\fu}{\mathfrak{u}}
\nc{\fl}{\mathfrak{l}}
\nc{\fn}{\mathfrak{n}}
\nc{\cP}{\mathcal{P}}
\nc{\ft}{\mathfrak{t}}
\nc{\fz}{\mathfrak{z}}
\nc{\fc}{\mathfrak{c}}
\nc{\cfc}{\check{\mathfrak{c}}}
\nc{\fh}{\mathfrak{h}}
\nc{\fp}{\mathfrak{p}}
\nc{\bone}{\mathbf{1}}
\nc{\tg}{\mathtt{g}}
\nc{\hfg}{\widehat{\fg}}
\nc{\ch}{\check{\fh}}
\nc{\hP}{\hat{P}}
\nc{\hg}{\widehat{\mathfrak{g}}}
\nc{\gO}{\mathfrak{g}[\![t]\!]}
\nc{\Ug}{\widehat{U}(\mathfrak{g})}
\nc{\dl}{/\!\!/}
\nc{\bGm}{\mathbb{G}_m}
\nc{\bGa}{\mathbb{G}_a}
\nc{\bL}{\mathbf{L}}
\nc{\bK}{\mathbf{K}}
\nc{\bJ}{\mathbf{J}}
\nc{\bI}{\mathbf{I}}
\nc{\bV}{\mathbb{V}}
\nc{\bM}{\mathbb{M}}
\nc{\bP}{\mathbb{P}}
\nc{\bA}{\mathbb{A}}
\nc{\bN}{\mathbb{N}}
\nc {\Q}{\mathrm{Q}}
\nc{\diag}{\mathrm{diag}}
\nc{\ev}{\mathrm{ev}}
\nc{\Res}{\mathrm{Res}}
\nc{\Fl}{\mathcal{F}\ell}
\nc{\Ad}{\mathrm{Ad}}
\nc{\ad}{\mathrm{ad}}
\nc{\pr}{\mathrm{pr}}
\nc{\Sl}{\mathfrak{sl}}
\nc{\gl}{\mathfrak{gl}}
\nc{\ra}{\rightarrow}
\nc{\tra}{\twoheadrightarrow}
\nc{\hra}{\hookrightarrow}
\nc{\quo}{\mathopen{ /\!/}}
\nc{\GL}{\mathrm{GL}}
\nc{\SL}{\mathrm{SL}}
\nc{\Sp}{\mathrm{Sp}}
\nc{\SO}{\mathrm{SO}}
\nc{\so}{\mathfrak{so}}
\nc{\PGL}{\mathrm{PGL}}
\nc{\Bun}{\mathrm{Bun}}
\nc{\supp}{\mathrm{supp}}
\nc{\bgamma}{\bar{\gamma}}
\nc{\I}{\mathrm{I}}
\nc{\II}{\mathrm{II}}
\nc{\III}{\mathrm{III}}
\nc{\ab}{\mathrm{ab}}
\nc{\td}{\mathrm{d}}
\nc{\Ht}{\mathrm{ht}}
\nc{\red}{\mathrm{red}}
\nc         {\rar}[1]       {\stackrel{#1}{\longrightarrow}}
\nc{\fa}{\mathfrak{a}}
\nc{\Hit}{\mathrm{Hit}}
\nc{\RS}{\mathrm{RS}}
\nc{\Loc}{\mathrm{Loc}}
\nc{\tLoc}{\widetilde{\mathrm{Loc}}}
\nc{\tphi}{\tilde{\phi}}
\nc{\reg}{\mathrm{reg}}
\nc{\im}{\mathrm{Im}}
\nc{\tp}{\mathfrak{p}}
\nc{\cA}{\mathcal{A}}
\nc{\cY}{\mathcal{Y}}
\nc{\opp}{\mathrm{opp}}
\nc{\Ind}{\mathrm{Ind}}
\nc{\sAn}{\mathrm{can}}
\nc{\Vac}{\mathrm{Vac}}
\nc{\Op}{\mathrm{Op}}
\nc{\Lg}{\check{\fg}}
\nc{\cDelta}{\check{\Delta}}
\nc{\cPhi}{\check{\Phi}}
\nc{\LV}{\check{V}}
\nc{\Lh}{\check{h}}
\nc{\LM}{\check{M}}
\nc{\Lm}{\check{\mathfrak{m}}}
\nc{\Lz}{\check{\mathfrak{z}}}
\nc{\La}{\check{\mathfrak{a}}}
\nc{\LG}{\check{G}}
\nc{\cT}{\check{T}}
\nc{\ct}{\check{\ft}}
\nc{\cB}{\check{B}}
\nc{\cb}{\check{\fb}}
\nc{\cN}{\check{N}}
\nc{\cn}{\check{\fn}}
\nc{\Spec}{\mathrm{Spec}}
\nc{\End}{\mathrm{End}}
\nc{\crho}{\check{\rho}}
\nc{\clambda}{\check{\lambda}}
\nc{\rX}{\mathring{X}}
\nc{\ru}{\mathring{u}}
\nc{\sW}{\mathscr{W}}
\nc{\sH}{\mathscr{H}}
\nc{\sV}{\mathscr{V}}
\nc{\geom}{\mathrm{geom}}
\nc{\Irr}{\mathrm{Irr}}
\nc{\fm}{\mathfrak{m}}
\nc{\aff}{\mathrm{aff}}
\nc{\Aut}{\mathrm{Aut}}
\nc{\cJ}{\mathcal{J}}
\nc{\fs}{\mathfrak{s}}
\nc{\Stab}{\mathrm{Stab}}
\nc{\st}{\mathrm{st}}
\nc{\tw}{{\widetilde{w}}}
\nc{\gen}{\mathrm{gen}}
\nc{\genn}{\mathrm{genn}}
\nc{\sss}{\mathrm{ss}}
\nc{\fsp}{\mathfrak{sp}}
\nc{\Hom}{\mathrm{Hom}}
\nc{\bm}{\mathbf{m}}
\nc{\HG}{\mathcal{HG}}
\nc{\Gal}{\mathrm{Gal}}
\nc{\Sym}{\mathrm{Sym}}
\nc{\rank}{\mathrm{rank}}
\nc{\tP}{\mathtt{P}}
\nc{\tL}{\mathtt{L}}
\nc{\tU}{\mathtt{U}}
\nc{\tW}{\widetilde{W}}
\nc{\Hk}{\on{Hk}}
\nc{\cL}{\mathcal{L}}
\nc{\talpha}{\widetilde{\alpha}}
\nc{\tQ}{{\widetilde{Q}}}
\nc{\ochi}{\overline{\chi}}
\nc{\tdelta}{\widetilde{\Delta}}
\nc{\wt}{\mathrm{wt}}
\nc{\fQ}{\mathfrak{Q}}
\nc{\Rep}{\mathrm{Rep}}
\nc{\Conn}{\mathrm{Conn}}
\nc{\Hecke}{\mathrm{Hecke}}
\nc{\Gr}{\mathrm{Gr}}
\nc{\GR}{\mathrm{GR}}
\nc{\IC}{\mathrm{IC}}
\nc{\Std}{\mathrm{Std}} 
\nc{\Db}{\mathrm{D}^{\mathrm{b}}}
\nc{\tr}{\mathrm{tr}}
\nc{\gr}{\mathrm{gr}}
\nc{\tmin}{\mathrm{min}}
\nc{\Fun}{\mathrm{Fun}~}
\newcommand{\quash}[1]{}
\begin{document} 
	\renewcommand{\thepart}{\Roman{part}}
	
	\renewcommand{\partname}{\hspace*{20mm} Part}
	
	\begin{abstract}
		We formulate a conjecture on local geometric Langlands for 
		supercuspidal representations
		using Yu's data and Feigin-Frenkel isomorphism.
		We refine our conjecture for a large family of 
		regular supercuspidal representations defined by Kaletha,
		and then confirm the conjecture for 
		toral supercuspidal representations of Adler
		whose Langlands parameters turn out to be exactly 
		all the irreducible isoclinic connections.
		As an application, 
		we establish the conjectural correspondence between
		global Airy connections for reductive groups
		and the family of Hecke eigensheaves constructed by Jakob-Kamgarpour-Yi.
	\end{abstract}

	\title{An explicit local geometric Langlands for supercuspidal representations: the toral case}
	\author{Lingfei Yi} 
	\date{\today} 
	\maketitle
	
	\tableofcontents
	
	\section{Introduction}
	\subsection{Supercuspidal representations and Yu data}
	In the seminal work of Jiu-Kang Yu \cite{Yu},
	he proposed a construction of supercuspidal representations
	of tame $p$-adic groups,
	which turns out to be exhaustive \cite{Kim, FintzenType}
	under some assumptions on the characteristic of the residue field.
	Let $G$ be a connected reductive group over a non-archimedean local field $F$
	with characteristic $p\neq2$.
	Assume $G$ splits over a tamely ramified extension of $F$.
	The input of the construction is a \emph{Yu datum}
	consisting of a tuple
	\[
	((G_i)_{1\leq i\leq n},x,(r_i)_{1\leq i\leq n},\rho,(\phi_i)_{1\leq i\leq n}),
	\]
	where $G=G_0\supset G_1\supset\cdots\supset G_n$
	are twisted Levi subgroups that split over a tamely ramified extension of $F$,
	$x$ is a point in the Bruhat-Tits building of $G_n$,
	$r_1>r_2>\cdots>r_n>0$ are real numbers,
	$\rho$ is an irreducible representation of the reductive quotient
	of the parahoric subgroup of $G_n$ defined by $x$,
	and $\phi_i$ is a character of $G_i(F)$ of depth $r_i$,
	where these data need to satisfy some extra assumptions.
	From such a datum, Yu constructed a compact open subgroup
	$J\subset G(F)$ and a representation $\tphi$
	such that the compact induced representation
	\begin{equation}\label{eq:c-ind s.c}
		\mathrm{c-ind}^{G(F)}_J \tphi
	\end{equation}
	is irreducible, thus supercuspidal \cite{Yu,FintzenTame}.
	
	On the other hand, in a series of works of Kaletha 
	\cite{KalehtaSimple, KalethaEpipelagic, KalethaRegular, KalethaPacket},
	he constructed L-packets for supercuspidal L-parameters,
	providing a potential explicit local Langlands correspondence 
	for supercuspidal representations.
	In the case of epipelagic representations, 
	he proved that for $G=\GL_n$,
	his bijection coincides with the local Langlands correspondence
	of Bushnell-Henniart, see \cite[\S6]{KalethaEpipelagic}.
	 ver, to the knowledge of the author,
	beyond the above situation,
	there is no known compatibility between Kaletha's construction 
	with any local Langlands correspondence that has been established.

	\subsection{A conjectural supercuspidal local geometric Langlands}
	The goal of this article is to propose a compatibility
	between Kaletha's construction with \emph{local geometric Langlands}
	of Frenkel-Gaitsgory \cite{FGLocal}
	in the de Rham setting,
	which we confirm for toral supercuspidal representations.
	
	\subsubsection{Local geometric Langlands}
	Let $F=\bC(\!(t)\!)$ and $\cO=\bC[\![t]\!]$,
	then $D^\times=\Spec F$ and $D=\Spec\cO$
	are punctured formal disk and formal disk.
	Let $G/\bC$ be a simply-connected simple algebraic group
	with Lie algebra $\fg$,
	and let $\LG$, $\Lg$ be its dual group and dual Lie algebra.
	In the de Rham setting, 
	a local L-parameter is a formal $\LG$-connection $\nabla$ over $D^\times$.
	It is proved by Frenkel-Zhu \cite{FZOper} that	
	any formal $\LG$-connection $\nabla$
	can be equipped with a so-called \emph{oper} structure.
	Roughly speaking, a $\LG$-oper consists of
	a $\LG$-bundle $\cF$, a flat connection $\nabla$ on $\cF$,
	and a Borel reduction $\cF_B$ of $\cF$ satisfying a strong transversality condition,
	see \cite{BD} for the precise definition.
	Denote by $\Op_{\Lg}(D^\times)$ the space of $\LG$-opers on $D^\times$
	and by $\Loc_{\LG}(D^\times)$ the stack of $\LG$-connections on $D^\times$.
	We have surjection
	\begin{equation}\label{eq:p:Op to Loc}
		p:\ \Op_{\Lg}(D^\times)\rightarrow\Loc_{\LG}(D^\times).
	\end{equation}
	
	Let $\hg$ be the affine Kac-Moody algebra for $\fg$ at critical level.
	It is a particular central extension of the loop algebra $\fg(\!(t)\!)$
	by central element $\bone$.
	Let $\Ug$ be the completed universal enveloping algebra of $\hg$
	in which the central element $\bone$ is identified with the unit $1$. 
	Its modules are smooth representations of $\hg$ at critical level,
	which we denote by $\hg-\mathrm{mod}$.
	Let $\fZ$ be the center of $\Ug$.
	The Feigin-Frenkel isomorphism states that there is an isomorphism
	\begin{equation}\label{eq:FF isom}
		\Op_{\Lg}(D^\times)\simeq\Spec\fZ
	\end{equation}
	compatible with a $\bGm$-action and certain Poisson structures on both sides.
	
	Given a $\LG$-connection $\nabla$ on $D^\times$,
	we take any oper structure $\chi\in p^{-1}(\nabla)$ above it.
	Via \eqref{eq:FF isom}, it defines a character 
	$\chi:\fZ\rightarrow\bC$.
	Let $\hg-\mathrm{mod}_\chi$ be the category
	of smooth $\hg$-modules
	on which $\fZ$ acts via $\chi$.
	This category is what Frenkel and Gaitsgory proposed to be the
	local Langlands correspondence of $\nabla$.
	More precisely,
	the loop group $LG=G(\!(t)\!)$ naturally acts on $\hg$ \cite[page 29]{FrenkelBook},
	inducing an action of $LG$ on the $\hg$-modules by 
	pre-composing the action on $\hg$.
	This action commutes with the action of center $\fZ$,
	so that $LG$ acts on category $\hg-\mathrm{mod}_\chi$,
	and moreover on its Grothendieck group $K_0(\hg-\mathrm{mod}_\chi)$.
	The resulting representation of $LG$
	is the analog of the admissible representation
	in the classical local Langlands correspondence for $p$-adic groups.
	
	It is conjectured in \cite{FGLocal} that 
	for different oper structures $\chi,\chi'$ over the same connection $\nabla$,
	the categories $\hg-\mathrm{mod}_\chi$, $\hg-\mathrm{mod}_{\chi'}$
	are equivalent.
	Some corollaries of this conjecture have been confirmed in the
	spherical and tamely ramified cases
	\cite{FGSpherical,FGFlag}.

	\subsubsection{An explicit correspondence for supercuspidal representations}
	Replacing $p$-adic group with loop group in a Yu datum,
	we obtain a $K$-type $(J,\tphi)$ 
	where $J\subset LG$ and $\tphi$ is a representation of $J$\footnote{The use of Weil representations in the construction of $\tphi$ needs to be replaced by extending $J$ by some Lagrangian subspaces.}.
    Choosing the point $x$ in the closure of the fundamental alcove,
    we can let $J$ be contained in the positive loop group
    $L^+G=G[\![t]\!]$.
    Denote by $\fj$ the Lie algebra of $J$,
    and still denote by $\tphi$ the associated representation of $\fj$.
	Since $\fj\subset\fg[\![t]\!]$, 
	$\fj+\bC\bone\subset\hg$ is a subalgebra.
	Consider induced $\hg$-representation
	\begin{equation}\label{eq:Vac_fj,tphi}
		\Vac_{\fj,\tphi}:=\mathrm{Ind}_{\fj+\bC\bone}^{\hg}\tphi.
	\end{equation}
	Denote its central support by
	\begin{equation}
		\fZ_{\fj,\tphi}:=\mathrm{Im}(\fZ\rightarrow\End\Vac_{\fj,\tphi}),
		\qquad
		\Op_{\Lg}(\fj,\tphi):=\Spec\fZ_{\fj,\tphi}\hookrightarrow\Op_{\Lg}(D^\times).
	\end{equation}
	
	\begin{conj}\label{c:main}\mbox{}
		\begin{itemize}
			\item [(i)]
			There exists a unique irreducible formal $\LG$-connection $\nabla_{\tphi}$
			such that $\Op_{\Lg}(\fj,\tphi)=p^{-1}(\nabla_{\tphi})$,
			i.e. all the oper structures on $\nabla_{\tphi}$.
			
			\item [(ii)]
			For different Yu data that induce isomorphic supercuspidal representations
			of the $p$-adic group, 
			the corresponding $K$-types $(J,\tphi)$, $(J',\tphi')$ for loop group
			induce the same central support: 
			$\Op_{\Lg}(\fj,\tphi)=\Op_{\Lg}(\fj',\tphi')$.
			In particular, their corresponding formal $\LG$-connections in (i)
			are isomorphic:
			$\nabla_{\tphi}\simeq\nabla_{\tphi'}$.
			
			\item [(iii)]
			Every irreducible formal $\LG$-connection arises as some $\nabla_{\tphi}$.
			
			\item [(iv)]
			The above correspondence between $\nabla_{\tphi}$ and Yu data
			coincides with the supercuspidal L-packets
			constructed by Kaletha \cite{KalethaPacket}.
		\end{itemize}
	\end{conj}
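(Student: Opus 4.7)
My plan is to confirm the conjecture in the toral case, which is the only situation in which Yu's tower collapses enough to give a direct handle on the induced module. Here the datum reduces to a tamely ramified elliptic torus $T \subset G$, a point $x$ in the building of $T$, a depth $r>0$, and a character $\phi$ of $T(F)$ of depth exactly $r$ satisfying a regularity hypothesis. I would begin by writing $\fj \subset \fg[\![t]\!]$ explicitly in terms of the Moy--Prasad filtration attached to $(T,x,r)$, identify $\tphi$ as the linear functional on $\fj$ obtained by differentiating $\phi$, and verify that $\Vac_{\fj,\tphi}$ is well-defined and cyclic as a $\hg$-module.

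The central step is to compute $\fZ_{\fj,\tphi}$. For this I would use the Feigin--Frenkel isomorphism \eqref{eq:FF isom} to translate the question into one about opers: which $\chi \in \Op_{\Lg}(D^\times)$ act on the cyclic vector of $\Vac_{\fj,\tphi}$ by the prescribed scalar? A filtration argument, together with the Kostant-section description of the Feigin--Frenkel generators, should force $\Op_{\Lg}(\fj,\tphi)$ to be the fiber of the projection $p$ over a single $\LG$-connection $\nabla_{\tphi}$, whose slope is determined by $r$ and the $T$-isotypic decomposition of $\fg$. This proves (i) while simultaneously producing an explicit formula for $\nabla_{\tphi}$; the explicit shape will place it among the irreducible isoclinic connections.

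Part (ii) reduces to a conjugation check: two toral Yu data giving isomorphic supercuspidals differ by $G(F)$-conjugation on $(T,x,\phi)$, and such conjugation manifestly preserves the central support. For (iii), I would run the construction in reverse: starting from an irreducible isoclinic $\LG$-connection, read off its slope and leading term, and manufacture a toral Yu datum whose associated $\nabla_{\tphi}$ recovers it. Part (iv) then becomes a comparison of the explicit formula for $\nabla_{\tphi}$ with Kaletha's construction, which in the toral case amounts to Langlands duality for tori applied to $\phi$ followed by induction to $\LG$; the identification should follow by matching leading terms.

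The main obstacle, in my view, is step (i): the Feigin--Frenkel center is only semi-explicit, and extracting its exact image on a non-vacuum induced module requires delicate bookkeeping of the interaction between the Moy--Prasad filtration on $\fj$ and the filtration on $\fZ$ coming from the oper jet bundle. Showing that the central support is \emph{exactly} a single $p$-fiber, rather than strictly larger, is the technical heart. I expect this to rely on an irreducibility criterion for isoclinic connections combined with a slope estimate on $\nabla_{\tphi}$, with the anticipated application to Airy connections serving as a useful sanity check on the answer.
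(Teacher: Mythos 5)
Your overall strategy for parts (i)--(iii) in the toral case is the same as the paper's: realize $\fj$ via the Moy--Prasad filtration of an elliptic torus, compute the central support through Feigin--Frenkel and the Kostant section, and show it is exactly one $p$-fiber over an irreducible isoclinic connection. However, the step you correctly identify as ``the technical heart'' is left entirely as a hope, and it is worth naming what it actually requires. The paper's argument is not a single filtration estimate: it needs (a) a PBW-level analysis of the Segal--Sugawara operators $S_{i,j}$ in the graded basis adapted to $\ft_Y\oplus\tau$, showing that for $(i,j)$ in the relevant index range at most one tensor factor can lie below depth $N/2$ and that the $\tau$-components are killed using $[\ft_{Y,d}u^d,S_{i,j}]=0$ together with the vanishing of the Killing form between $\ft_Y$ and $\tau$ (Proposition \ref{p:local quant diagram}); (b) a computation of the local Hitchin image of $\fj^{+,\perp}$ identifying $\gr\fZ_{\fj^+}$ (Proposition \ref{p:local Hitchin image}); and (c) the fact that the classical map from characters $\tphi$ of $\bj$ to points of $\Spec A$ is finite of degree $|W_0|$, not injective, so one must separately prove that all $\tphi$ in a fiber of $h^{cl}$ yield the \emph{same} central support and that their union is exactly $p^{-1}(\nabla)$ (Lemma \ref{l:central support of toral sc repn}). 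Your sketch does not register point (c) at all, and without it the equality $\Op_{\Lg}(\fj,\tphi)=p^{-1}(\nabla_{\tphi})$ does not follow.

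The second genuine gap is part (iv). You assert that the comparison with Kaletha's L-packets ``should follow by matching leading terms,'' but the paper does not prove (iv) even in the toral case (Theorem \ref{t:intro main} claims only (i)--(iii)), and for good reason: Kaletha's parametrization is a Galois-theoretic construction over a $p$-adic field, while $\nabla_{\tphi}$ is a de Rham object, so there is no common ambient category in which ``matching leading terms'' is a proof; one would first need a dictionary between the two settings (refined leading terms versus Howe factorizations, as in \S\ref{ss:dual refined leading terms}), and that dictionary is precisely what remains conjectural. You should either restrict your claim to (i)--(iii) or supply the missing comparison.
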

	
	\begin{rem}\mbox{}
		\begin{itemize}
			\item [(i)]
			The conditions for different Yu data to give 
			isomorphic supercuspidal representations
			are given by Hakim and Murnaghan \cite{HM}.
			
			\item [(ii)]
			For epipelagic representations, a special family of supercuspidal representations of depth $\frac{1}{m}$ for $m$ a regular elliptic number,
			part (i) of the conjecture
			has been essentially proved in \cite[\S4, \S5]{CYTheta}.
		\end{itemize}
	\end{rem}
    
    We will refine the above conjecture by
    introducing the notion of \emph{refined leading terms}
    for both formal connections and $K$-types $(J,\tphi)$
    in \S\ref{s:conn refined leading terms} and \S\ref{s:sc refined leading term}.
    In \S\ref{s:regular sc conj},
    we use refined leading terms to give a refinement of Conjecture \ref{c:main}
    for a large family of \emph{regular supercuspidal representations}.
    These supercuspidal representations are defined and studied by 
    Kaletha \cite{KalethaRegular}.
    We will see that the refinement of the conjecture leads to a potential approach
    to its proof,
    which we plan to revisit in the future works.
    
    Our formulation of local geometric Langlands 
    for the above representations is explicit
    in the following sense:
    the Langlands parameter $\nabla_{\tphi}$
    is conjectured to be the gauge equivalence class
    of a `minimal' oper canonical form,
    where the coefficients of the equation
    are directly determined by the character $\tphi$
    as in Remark \ref{r:explicit local Langlands}.

    \subsubsection{Evidence to Frenkel-Gaitsgory conjecture}
    Conjecture \ref{c:main} provides evidence to 
    Frenkel-Gaitsgory's conjecture that 
    $\hg-\mathrm{mod}_\chi$
    is independent of the choice of the oper structure.
    Let $\chi$ be an arbitrary oper structure on $\nabla_{\tphi}$.
    By the definition of $\nabla_{\tphi}$,
    $\Vac_{\fj,\tphi}/\ker\chi$ is a $(J,\tphi)$-equivariant object in
    the category $\hg-\mathrm{mod}_\chi$.
    Suppose $\Vac_{\fj,\tphi}/\ker\chi$ is nonzero.
    Then it provides a nonzero $(J,\tphi)$-eigenvector
    in the $LG$ representation $\pi_\chi=K_0(\hg-\mathrm{mod}_\chi)$
    on the Grothendieck group.
    Heuristically, 
    if we pretend as if $LG$ behaves the same as a $p$-adic group 
    and formally consider the induced representation $\mathrm{c-ind}^{G(F)}_J\tphi$,
    it is supposed to be irreducible, therefore supercuspidal.
    We obtain a nonzero map
    \begin{equation}\label{eq:sc repn to K-gp}
    	\iota:\mathrm{c-ind}^{G(F)}_J\tphi\rightarrow \pi_\chi.
    \end{equation}
    Since $\mathrm{c-ind}^{G(F)}_J\tphi$ is irreducible,
    $\iota$ is isomorphic if $\pi_\chi$ is irreducible.
    In particular, $\pi_\chi$ depends only on $(J,\tphi)$ 
    and is independent of the choice of oper structure $\chi$ on $\nabla_{\tphi}$.

	\subsection{Isoclinic formal connections and toral supercuspidal representations}
	In this article, we confirm Conjecture \ref{c:main}.(i)-(iii)
	for \emph{toral supercuspidal representations},
	a family constructed by Adler \cite{Adler}
	and named by Kaletha \cite{KalethaRegular}.
	The Yu data of such supercuspidal representations
	contain only one proper twisted Levi subgroup given by an elliptic torus.
	This family of supercuspidal representations
	contain epipelagic representations
	and can attain arbitrary large depth.
	
	It turns out that their L-parameters $\nabla_{\tphi}$ 
	defined via Conjecture \ref{c:main} 
	are exactly all the \emph{irreducible isoclinic connections},
	i.e. those irreducible formal $\LG$-connections
	whose canonical forms have regular semisimple leading terms,
	see \S\ref{s:isoclinic oper} for details.
    Such formal connections have been studied in \cite{JYDeligneSimpson},
	where the authors determine the conditions for 
	the existence of a $\LG$-connection on $\bP^1-\{0,\infty\}$
	with one regular singularity at $0$ with fixed monodromy class and 
	one irregular singularity at $\infty$ with fixed isoclinic irregular type.
	See \S\ref{ss:isoclinic conn} for a review.
	
	We summarize our main result as follows.
	\begin{thm}\label{t:intro main}
		Conjecture \ref{c:main}.(i)-(iii) 
		hold for toral supercuspidal representations.
		The L-parameters $\nabla_{\tphi}$ are all the 
		irreducible isoclinic foraml connections.
	\end{thm}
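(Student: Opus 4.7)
The plan is to reduce the theorem to an explicit computation of the central support $\fZ_{\fj,\tphi}$ using the Feigin-Frenkel isomorphism and the Frenkel-Zhu canonical form of opers, and then to match the resulting locus in $\Op_{\Lg}(D^\times)$ with the classification of irreducible isoclinic formal connections obtained in \cite{JYDeligneSimpson}. Once this identification is in place, parts (i) and (iii) of Conjecture \ref{c:main} become the statements that the relevant locus is a single fiber of $p$ over an irreducible isoclinic $\nabla_{\tphi}$ and that every such connection is attained; part (ii) becomes a symmetry check under Hakim-Murnaghan equivalence of Yu data.

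First I would unpack the toral $K$-type. For a toral Yu datum $(G \supset T, x, r, \rho, \phi)$ with $T$ an elliptic tame maximal torus and $\phi$ a character of $T(F)$ of depth $r > 0$, the subgroup $J \subset L^+G$ is built from the Moy-Prasad subgroup $T_{x,r/2}$ together with affine root subgroups attached to affine roots whose evaluation at $x$ lies in the interval $(r/2, r]$. The Lie subalgebra $\fj \subset \gO$ then carries a natural linear character $\tphi$ whose essential information is a leading term $\lambda \in \ft^*$, obtained from the derivative of $\phi$ along $t^{-r}$; ellipticity of $T$ together with the Yu genericity condition force $\lambda$ to be $G$-regular semisimple. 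The induced module $\Vac_{\fj,\tphi}$ is filtered by order of $t$, and its associated graded admits a twisted Wakimoto-type realisation relative to $T$.

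Next I would identify $\Op_{\Lg}(\fj,\tphi)$ inside $\Op_{\Lg}(D^\times) \simeq \Spec\fZ$. Through Feigin-Frenkel, the action of Segal-Sugawara central generators on a distinguished generator of $\Vac_{\fj,\tphi}$ computes the evaluation of oper coefficients on $\tphi$. Writing an arbitrary oper in canonical form and matching term by term, I expect to show that $\Op_{\Lg}(\fj,\tphi)$ is precisely the affine subspace of opers whose leading irregular coefficient is the regular semisimple element determined by $\lambda$ and whose lower-order coefficients are free parameters. By the irreducibility and isoclinicity criteria of \cite{JYDeligneSimpson}, every oper in this subspace covers the same underlying irreducible $\LG$-connection $\nabla_{\tphi}$, and conversely the fiber $p^{-1}(\nabla_{\tphi})$ fills out exactly this affine subspace. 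This proves (i); varying $\phi$ over all toral Yu data the leading term $\lambda$ ranges over all regular semisimple orbits and the subleading oper parameters exhaust the remaining degrees of freedom in the classification of \cite{JYDeligneSimpson}, yielding (iii).

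For (ii) I would invoke Hakim-Murnaghan's criterion that two toral Yu data yield isomorphic supercuspidals exactly when they are conjugate under $G(F)$ up to a depth-zero twist; both operations visibly preserve the $\LG$-orbit of $\lambda$ and the associated oper locus, so the central supports coincide. The main obstacle is the explicit computation of $\Op_{\Lg}(\fj,\tphi)$: one must produce enough central elements in $\fZ$ whose eigenvalues on $\Vac_{\fj,\tphi}$ can be read off from $\tphi$, and simultaneously bound the locus from below. The epipelagic case $r = 1/m$ was handled in \cite{CYTheta} by a direct Wakimoto argument at a single Moy-Prasad depth, but for general $r > 0$ the character $\tphi$ spans a nontrivial interval of depths, so I anticipate the need for a filtered refinement of that argument, combined with the full structure of the oper canonical form developed in \cite{JYDeligneSimpson}.
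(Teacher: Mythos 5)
Your outline follows the same broad strategy as the paper (Segal--Sugawara operators acting on the induced module, oper canonical forms, and the isoclinic classification of \cite{JYDeligneSimpson}), but it has two genuine gaps. First, you reduce the character $\tphi$ to a single leading term $\lambda\in\ft^*$ and then describe $\Op_{\Lg}(\fj,\tphi)$ as the opers ``whose leading irregular coefficient is determined by $\lambda$ and whose lower-order coefficients are free parameters.'' Taken literally this locus is $p^{-1}(\Loc(X,\nu))$, the union of fibers over \emph{all} isoclinic connections with leading term $X$, which is the central support of $\Vac_{\fj^+}/\ker\psi$, not of $\Vac_{\fj,\tphi}$; it is not a single fiber of $p$, so part (i) would fail. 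The correct statement is that the full character $\tphi$ (a point of $\bj^*\simeq\bigoplus_{1\leq i\leq N}\ft_{Y,i}u^i$, not just its top graded piece) determines \emph{every} oper coefficient $v_{i,j}$ with $-N\leq\ell_{i,j}\leq -1$, and only the coefficients with $\ell_{i,j}\geq 0$ are free; one then needs the separate fact that those free coefficients do not change the isomorphism class of the underlying isoclinic connection. Relatedly, the map from characters $\tphi$ to connections is not injective: it factors through a $|W_0|$-fold covering $\bj^{*,\rs}\to\Hit(D)_{\bj}^{\rs}$ (the little Weyl group of the $\Theta$-grading), and one must check that the $|W_0|$ characters over a given Hitchin point all have the same central support. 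Your Hakim--Murnaghan argument for (ii) does not engage with this.

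Second, the step you defer as ``the main obstacle'' is in fact the heart of the proof and cannot be waved through. One must show (a) that for $(i,j)$ in the relevant index range the image of $S_{i,j}$ in $\End\Vac_{\fj^+}$ lands inside $U(\bj)$ — this requires a PBW analysis with respect to the Moy--Prasad grading showing that at most one tensor factor of $S_{i,j}\cdot 1$ has degree $\leq N/2$ and that its component along the root spaces $\tau$ vanishes (using that the Killing form pairs $\ft_Y$ trivially against $\tau$ and that $S_{i,j}$ commutes with $\ft_{Y,d}u^d$); and (b) that the associated graded of $\fZ_{\fj^+}$ is exactly the functions on $\bigoplus_i t^{-d_i-\lfloor d_iN/m\rfloor}\bC[\![t]\!](\td t)^{d_i}$, which both bounds the support from above (no constraints beyond slope $\leq\nu$) and from below (the map $A\to U(\bj)$ is injective). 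Without (a) and (b) the claimed term-by-term matching of oper coefficients with values of $\tphi$ is unsubstantiated. Your proposed ``twisted Wakimoto realisation'' is not how the paper proceeds, and it is not clear it would deliver (b); the paper instead uses the classical local Hitchin map and Kostant sections adapted to the $\Theta$-grading.
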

    
    We will prove a refined statement in \S\ref{s:isoclinic local Langlands},
	see Lemma \ref{l:central support of toral sc repn},
	Theorem \ref{t:main},
	and Remark \ref{r:local conj for toral}.
	An explicit equation of $\nabla_{\tphi}$ in terms of $\tphi$
	will be given by \eqref{eq:Op=Loc(X,nu)},
	see Theorem \ref{t:main}.

	\subsection{Application to Airy connections}
	Let $h$ be the Coxeter number of $\LG$.
	Let $t$ be a coordinate at $0\in\bP^1$.
	The isoclinic formal connections with slope $\frac{1+h}{h}$ over $D^\times$
	can be globalized to
	$\LG$-connections over $\bP^1-\{0\}$
	called \emph{Airy $\LG$-connections}, c.f. \cite{HJRigid,KSRigid,JKY}.
	Such connections greatly generalize 
	the classical Airy equation $y''=zy$ for $z=t^{-1}$.
	In \cite{KSRigid},
	the authors constructed a family of Airy $\LG$-connections explicitly as follows.
	
	Fix a root system of $\Lg$ with simple roots.
	Let $p_{-1}$ be the sum of a set of basis of negative simple root subspaces,
	and $p_n$ a basis of the highest root subspace.
	Then
	\begin{equation}\label{eq:KS Airy}
	\nabla=\td+(t^{-2}p_{-1}+t^{-3}p_n)\td t
	\end{equation}
	defines a family of Airy $\LG$-connections,
	see \cite[equation (2)]{KSRigid}.
	We give equations of arbitrary Airy $\LG$-connections in 
	Proposition \ref{p:Airy eq}.
	
	In Theorem \ref{t:Airy Langlands}
	we show that the eigenvalues of 
	the Hecke eigensheaves constructed in \cite{JKY}
	are Airy $\LG$-connections, 
	generalizing \cite[Corollary 36]{JKY} for the case of general linear groups.
	Our argument also provides a new proof of the physical rigidity
	of those Airy connections appearing as eigenvalues of these Hecke eigensheaves,
	partially recovering \cite[Theorem 5.5.2]{HJRigid}.

	\subsection{Future directions}
	\subsubsection{Applications in positive characteristic}
	The $K$-types $(J,\tphi)$ of toral supercuspidal representations of small depth 
	have appeared as part of various \emph{rigid automorphic data}.
	These data give rise to $\LG$-local systems on punctured projective line
	in both positive characteristic and characteristic zero setting.
	For $\ell$-adic sheaves on punctured projective line over a finite field, 
	their $p$-adic companions can be realized as $F$-iscocrystals on a rigid space.
	In a joint work in progress with Daxin Xu, 
	we study the $p$-adic local systems over Robba ring
	whose underlying connection is isoclinic.
	As an application, 
	we combine the above with results in this article and \cite{CYTheta}
	to establish some conjectural properties 
	of the $\ell$-adic $\LG$-local systems
	constructed as eigenvalues of those Hecke eigensheaves in 
	\cite{KXY, YunEpipelagic}.
	In particular, we will deduce their $p$-adic physical rigidity
	and compute their Swan conductors.

	\subsubsection{Regular supercuspidal representations}
	Regular supercuspidal representations are generalization of 
	toral supercuspidal representations
	that arise from a much simpler form of data than Yu data
	consisting of an elliptic maximal torus $S$
	and certain character $\phi:S(F)\rightarrow\bC^\times$.
	Kaletha associated Yu data to such pairs,
	which give rise to the regular supercuspidal representations.	
	Also, he constructed L-packets for such representations.
	Our next step towards an explicit local geometric Langlands
	aims to prove Conjecture \ref{c:main} for regular supercuspidal representations.
	A refined conjecture will be given in Conjecture \ref{c:local conj}.

	\subsubsection{Comparison with a wildly ramified Langlands}
	The results of this article should be closely related to 
	the work of Bezrukavnikov-Boixeda-McBreen-Yun \cite{BBMY}.
	In \S5 of \emph{loc. cit.},
	the authors proposed a conjectural wildly ramified global Langlands correspondece.
	At the unique wildly ramified place,
	the level structure they imposed is exactly $(J,\tphi)$ for a toral supercuspidal representation.
	We hope to investigate the application of our local results to their conjecture.

	\subsection{Organization of the article}
	In \S\ref{s:conn refined leading terms} and 
	\S\ref{s:sc refined leading term},
	we review the canonical forms of formal connections 
	and Yu data of supercuspidal representations,
	and introduce their refined leading terms respectively.
	In \S\ref{s:regular sc conj}, 
	we refine our conjecture
	for a large family of regular supercuspidal representations.
	In \S\ref{s:isoclinic and oper},
	we determine oper structures on isoclinic connections.
	In \S\ref{s:toral central support},
	we review the construction of toral supercuspidal representations
	and compute the corresponding central supports.
	In \S\ref{s:isoclinic local Langlands},
	we prove the refined conjecture for irreducible isoclinic connections.
	In \S\ref{s:Airy conn},
	we discuss applications on Airy $\LG$-connections.

	\subsection*{Notations}
	Let $G$ be a simply-connected simple algebraic group over $\bC$.
	Let $T\subset B\subset G$ be a maximal torus and Borel subgroup of $G$.
	Let $\ft\subset\fb\subset\fg$ be their Lie algebras.
	Let $\cT\subset\cB\subset\LG$ 
	be the dual maximal torus and a fixed Borel in the dual group,
	and $\ct\subset\cb\subset\Lg$ their Lie algebras.
	Let $n$ be the rank of $G$.
	Denote by $\Phi(G,T),\check\Phi(G,T),\Phi^+(G,T),\check\Phi^+(G,T),
	\Delta_{G},\check\Delta_{G}$ 
	the (co)roots, positive (co)roots, simple (co)roots of $G$,
	and similarly for $\LG$ and Levi subgroups.
	
    Denote by $\crho:=\check{\rho}_{\LG}$ 
	the half sum of positive coroots of $\LG$.
	Let $\Lg^{rs}$ be the locus of regular semisimple elements.
	For any subspace $\fh\subset\Lg$,
	$\fh^{rs}:=\fh\cap\Lg^{rs}$.
	
	Let $W_G\simeq W_{\LG}$ be the Weyl group acting on $\ct$.
	We say an element $w\in W$ is regular if it has an eigenvector $v\in\ct$
	that is not fixed by any element of $W$.
	The orders of regular elements are called regular numbers of $W$.
	
	Let $F=\bC(\!(t)\!)$, $\cO=\bC[\![t]\!]$.
	Let $\overline{F}$ be the algebraic closure of $F$.
	Let $D^\times=\Spec F\subset D=\Spec\cO$ 
	be punctured formal disk and formal disk.
	
	\subsection*{Acknowledgement}
	The author thanks Konstantin Jakob and Daxin Xu for helpful discussions,
	thanks Zhiwei Yun for providing comments on a draft of the paper,
	and thanks Will Sawin for pointing out a mistake in \S\ref{sss:Yu construction}.
	The research of Lingfei Yi is supported by the grant No. JIH1414033Y of Fudan University.

	\section{Refined leading terms of formal connections}\label{s:conn refined leading terms}
	
	\subsection{Canonical forms}
	
	A formal $\LG$-connection $\nabla$ on $D^\times$ 
	is a $\LG(\!(t)\!)$-gauge equivalence class 
	of an equation of the following form: 
	\[
		\td+(A_{-r}t^{-r}+A_{-r+1}t^{-r+1}+\cdots)\td t,\quad A_i\in\Lg,\ r\in\bZ,
	\]
	where the gauge transform of $g\in \LG(\!(t)\!)$ acts by
	\[
		g\cdot\nabla
		=\td+\Ad_g(A_{-r}t^{-r}+A_{-r+1}t^{-r+1}+\cdots)\td t-g^{-1}(\td g).
	\]
	
	The main theorem of \cite[\S9.5]{BV} tells us that
	a formal connection $\nabla$ can always be 
	$\LG(\overline{F})$-gauge transformed 
	into the following form:
	\begin{equation}\label{eq:canonical form}
		\td+(D_1t^{-r_1}+\cdots+D_kt^{-r_k}+D_{k+1})\frac{\td t}{t},
	\end{equation}
	where $r_1>r_2>\cdots>r_k>0$ are rational numbers, $r_{k+1}=0$,
	$D_i\in\Lg$ are mutually commutative nonzero elements(if exists), 
	and $D_i$'s are all semisimple for $i<0$.
	This is called a \emph{canonical form} of $\nabla$.
	Any two canonical forms of $\nabla$
	must have the same $r_i$'s,
	and the tuples $(D_1,...,D_k,\exp(2\pi\sqrt{-1} D_{k+1}))$ 
	is unique up to conjugation by $\LG$.
	In particular, $\nu:=r_1$ is called the
	\emph{slope} of $\nabla$.
	By \cite[\S9.8 Lemma 1]{BV},
	we can choose the canonical form to be \emph{weakly $\bZ$-reduced},
	i.e. the semisimple part $D_{k+1,s}$ of $D_{k+1}$ satisfies that
	for any $m\in\bZ-\{0\}$,
	$D_{k+1,s}$ and $\exp(2\pi\sqrt{-1}mD_{k+1,s})$ have the same centralizer in
	the common centralizer 
	$\Lg_{D_1,...,D_k}=\cap_{i=1}^k\fz_{\Lg}(D_i)$.
	
	Conversely, given a weakly $\bZ$-reduced form $\nabla$ over $\overline{F}$
	as in \eqref{eq:canonical form},
	\cite[\S9.8 Proposition]{BV} shows that
	such a form is the canonical form of a formal connection over $F$
	if and only if there exists integer $b\geq1$ and $\theta\in\LG$ satisfying
	$\theta^b=1$, $br_j\in\bZ$, 
	$\Ad_\theta(D_i)=\exp(-2\pi\sqrt{-1}r_i)D_i$ for all $i$.

	\subsection{Refined leading terms}
	\subsubsection{}
	The nonzero semisimple coefficient $D_1\neq0$ 
	of \eqref{eq:canonical form} is called 
	its	\emph{leading term}.
	In the following we define \emph{refined leading terms}
	that encode more information from the connection.
	
	We begin with a weakly $\bZ$-reduced canonical form $\nabla$
	as in \eqref{eq:canonical form}.
	Fix once and for all a Cartan subalgebra 
	$\ch\subset\Lg_{D_1,...,D_k}$ 
	contained in the common centralizer of the irregular part.
	Let $\check{H}$ be the corresponding maximal torus in $\LG$.
	For $1\leq i\leq k$,
	let $\LM_i=C_{\LG}(D_1,...,D_i)$ be the common centralizer.
	We also let $\LM_{k+1}:=\check{H}$.
	Since $D_i$'s are mutually commuting semisimple elements of $\ch$, 
	$\LM_i$ is the centralizer of the minimal subtorus 
	whose Lie algebra contains $D_1,...,D_i$,
	thus a connected Levi subgroup containing $\check{H}$.
	Denote its Lie algebra by $\Lm_i$, the center of $\Lm_i$ by $\Lz_i$,
	the derived subalgebra by $\Lm_i^\der=[\Lm_i,\Lm_i]$.
	We take convention $\Lm_0=\Lg,\Lz_0=0$.
	The Levi decomposition gives $\Lm_i=\Lz_i\oplus\Lm_i^\der$,
	where $\Lz_i\subset\ch$.
	Note $D_1,...,D_i\in\Lz_i$,
	but $\Lz_i$ may not be spanned by $D_j$'s, 
	for instance when $X_1$ is regular semisimple.
	We have filtrations
	\[
	\Lg=\Lm_0\supset\Lm_1\supset\Lm_2\supset\cdots\supset\Lm_k\supset\Lm_{k+1}=\ch,\qquad
	0=\Lz_0\subset\Lz_1\subset\Lz_2\subset\cdots\subset\Lz_k\subset\Lz_{k+1}=\ch.
	\]
	
	Denote $\La_i:=\Lz_i\cap\Lm_{i-1}^\der$.
	Here $\La_1=\Lz_1$.
	Since $\Lz_{i-1}\subset\Lz_i$, 
	the Levi decomposition
	$\Lz_i\subset\Lm_{i-1}=\Lz_{i-1}\oplus\Lm_{i-1}^\der$
	induces decompositions
	\[
	\Lz_i=\Lz_{i-1}\oplus\La_i,\qquad
	\Lz_i=\bigoplus_{1\leq j\leq i}\La_j.
	\]
	Denote the projection of $D_i\in\Lz_i$ to $\La_i$ by $X_i$.
	Denote
	\[
	I=\{1\leq i\leq k\mid X_i\neq0\}=\{i_1<i_2<\cdots<i_p\},\quad
	R=\{r_i\mid i\in I\},\quad
	X=(X_i)_{i\in I}.
	\]
	Note that $i_1=1$.
	Equivalently, $I$ is the set of breaks of the Levi subalgebra filtration:
	\[
	\Lm_1=\Lm_{i_1}=\Lm_{i_1+1}=\cdots=\Lm_{i_2-1}\supsetneq\Lm_{i_2}=\cdots=\Lm_{i_3-1}
	\supsetneq\cdots\supsetneq\Lm_{i_p}=\cdots=\Lm_k.
	\]
	
	\begin{defe}\label{d:refined leading term}
		The tuple $X=(X_i)_{i\in I}$ are the \emph{refined leading terms}
		of the canonical form of $\nabla$
		with slopes $R$.
	\end{defe}
    
    \begin{lem}\label{l:refined leading term determines Levi seq}
    	Fix refined leading terms $X=(X_i)_{i\in I}$. 
    	For any $1\leq i\leq k$, let $i_s$ be the maximal element $i_s\in I$
    	with $i_s\leq i$.
    	Then
    	\begin{equation}\label{eq:Levi via refined leading term}
    		\LM_i=\LM_{i_s}=C_{\LG}(X_1,...,X_{i_s}).
    	\end{equation}
        In particular, the Levi sequence $\LM_i$
        is determined by the refined leading terms.
    \end{lem}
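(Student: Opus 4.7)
The plan is to separate the statement into two assertions and prove each by a short computation: first, that the skipped indices $j\notin I$ contribute no new centralizer condition (so $\LM_i=\LM_{i_s}$), and second, that along the breaks we may replace each $D_j$ by its refined leading term $X_j$ without changing the centralizer (so $\LM_{i_s}=C_{\LG}(X_1,\dots,X_{i_s})$).

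First I would verify the equivalence $X_j=0\iff \LM_j=\LM_{j-1}$, which identifies $I$ with the set of breaks of the Levi filtration. The direction $X_j=0\Rightarrow\LM_j=\LM_{j-1}$ follows because then $D_j\in\Lz_{j-1}=Z(\Lm_{j-1})$, so $D_j$ commutes with all of $\Lm_{j-1}$ and the centralizer cannot shrink. Conversely, if $\LM_j=\LM_{j-1}$, then $\Lz_j=\Lz_{j-1}$, so $\La_j=\Lz_j\cap\Lm_{j-1}^\der=\Lz_{j-1}\cap\Lm_{j-1}^\der=0$ using the reductive Levi decomposition $\Lm_{j-1}=\Lz_{j-1}\oplus\Lm_{j-1}^\der$, hence $X_j=0$. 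This gives the chain $\LM_{i_s}=\LM_{i_s+1}=\cdots=\LM_{i_{s+1}-1}$, so for any $i$ with $i_s\le i<i_{s+1}$ (or $i\ge i_p$ when $s=p$) we have $\LM_i=\LM_{i_s}$.

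Next I would prove by induction on $j$ that $\LM_j=C_{\LG}(X_1,\dots,X_j)$; since $X_i=0$ for $i\notin I$, this equality then specializes at $j=i_s$ to the desired formula. The base case $j=1$ is immediate from $\La_1=\Lz_1$, which forces $X_1=D_1$. For the inductive step, write $D_j=X_j+D_j'$ according to $\Lz_j=\La_j\oplus\Lz_{j-1}$, so $D_j'\in\Lz_{j-1}=Z(\Lm_{j-1})$. Then
\[
\LM_j=\LM_{j-1}\cap C_{\LG}(D_j)=\LM_{j-1}\cap C_{\LG}(X_j+D_j')=\LM_{j-1}\cap C_{\LG}(X_j),
\]
where the last equality uses that $D_j'$ centralizes all of $\LM_{j-1}$, while $X_j\in\La_j\subset\Lm_{j-1}$ so the centralizer condition is unchanged when we drop $D_j'$. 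Combining with the inductive hypothesis $\LM_{j-1}=C_{\LG}(X_1,\dots,X_{j-1})$ yields $\LM_j=C_{\LG}(X_1,\dots,X_j)$, completing the induction.

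The argument is essentially bookkeeping, so there is no serious obstacle; the only subtle point is making sure the decomposition $\Lz_j=\Lz_{j-1}\oplus\La_j$ interacts correctly with the Levi decomposition $\Lm_{j-1}=\Lz_{j-1}\oplus\Lm_{j-1}^\der$, in order to justify both the equivalence $X_j=0\iff\LM_j=\LM_{j-1}$ and the passage from $D_j$ to $X_j$ inside the centralizer. Both follow from the fact that $\Lz_{j-1}$ is central in $\Lm_{j-1}$, a feature we can invoke directly from the inductive hypothesis since the previous step already identifies $\Lm_{j-1}$ as the relevant ambient Levi.
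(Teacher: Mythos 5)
Your proposal is correct and follows essentially the same route as the paper: decompose $D_j=X_j+D_j'$ with $D_j'\in\Lz_{j-1}$ central in $\Lm_{j-1}$, observe that an element of the connected group $\LM_{j-1}$ centralizes $D_j$ iff it centralizes $X_j$, and iterate. The only difference is presentational — you make the equivalence $X_j=0\iff\LM_j=\LM_{j-1}$ and the induction explicit where the paper says ``repeat the process.''
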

    \begin{proof}
    	First, $\LM_1$ is the centralizer of $X_1=D_1$.
    	We have $D_i\in\Lz_1$ and $\LM_i=\LM_1$
    	for $1\leq i<i_2$.
    	Write $D_{i_2}=Z_{i_2}+X_{i_2}$ for the decomposition
    	$\Lz_{i_2}=\Lz_{i_2-1}\oplus\La_{i_2}$.
    	Then an element of $\LM_{i_2-1}=\LM_1$
    	centralizes $D_{i_2}$ if and only if it centralizes $X_{i_2}$.
    	Repeat the process determines all the Levi subgroups $\LM_i$
    	as in \eqref{eq:Levi via refined leading term}.
    \end{proof}
    
    \begin{defe}\label{d:Levi generic}
    	A tuple $X=(X_i\in\La_i)_{i\in I}$ is \emph{generic}
    	for the sequence $\LM_i,1\leq i\leq k$ if 
    	\eqref{eq:Levi via refined leading term}
    	is satisfied.
    \end{defe}
    
    Since the irregular part of any canonical form of a formal connection
    is unique up to $\LG(\bC)$-conjugation,
    the set $R$, the sequence of Levi subgroups $\LM_i$,
    and the refined leading terms $(X_i)_i$
    are all uniquely determined by the isomorphic class of the formal connection
    up to conjugation.
    
    The following property follows immediately from definition.
    \begin{lem}
    	If the refined leading terms of $\nabla$ 
    	are generic for $\LM_i$ with slopes $r_i$, $i\in I$,
    	then the slopes of the adjoint connection $\nabla^\Ad$ 
    	consist of
    	$\dim\Lm_{k+1}=\mathrm{rk}\Lg$ copies of $0$ and
    	$|\Phi(\LM_{i_j})-\Phi(\LM_{i_{i+1}})|$ copies of $r_{i_j}$, $i_j\in I$.
    \end{lem}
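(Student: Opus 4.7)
The plan is to diagonalise $\nabla^{\Ad}$ via the root decomposition of $\Lg$ with respect to the fixed Cartan $\ch$ and read off the slope on each one-dimensional summand. Since every $D_i$ with $i\le k$ lies in $\ch$ and the $D_i$ pairwise commute, the adjoint action preserves the decomposition $\Lg=\ch\oplus\bigoplus_{\alpha\in\Phi(\LG,\check{H})}\Lg_\alpha$; on each summand $\ad(D_i)$ acts by the scalar $\alpha(D_i)$, and by $0$ on $\ch$. Any nilpotent part of $D_{k+1}$ only enters at the slope-$0$ level and cannot affect the slope of a line. Thus $\nabla^{\Ad}$ splits as a direct sum of one-dimensional formal connections, with slope on $\Lg_\alpha$ equal to the largest $r_i$ for which $\alpha(D_i)\ne 0$ (or zero if none exists), and slope $0$ on $\ch$; this already yields the asserted $\dim\Lm_{k+1}=\rank\Lg$ copies of $0$.

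Next I would translate the condition $\alpha(D_i)\ne 0$ into Levi-filtration language. Between consecutive breaks $\Lm_i$, and hence $\Lz_i$, is constant, so $D_i\in\Lz_{i_{s-1}}$ for every $i<i_s$; since $\Lz_{i_{s-1}}\subset Z(\Lm_{i_{s-1}})$, every root $\alpha\in\Phi(\LM_{i_{s-1}})$ satisfies $\alpha(D_i)=0$ for all $i<i_s$. Writing $D_{i_s}=Z+X_{i_s}$ with $Z\in\Lz_{i_{s-1}}$ gives $\alpha(D_{i_s})=\alpha(X_{i_s})$. Under genericity, Lemma~\ref{l:refined leading term determines Levi seq} identifies $\LM_{i_s}=C_{\LG}(X_1,\dots,X_{i_s})$, so for such $\alpha$ we have $\alpha(X_{i_s})=0$ precisely when $\alpha\in\Phi(\LM_{i_s})$. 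Consequently the roots in $\Phi(\LM_{i_{s-1}})\setminus\Phi(\LM_{i_s})$ yield one-dimensional summands with leading term $\alpha(X_{i_s})t^{-r_{i_s}}$ and hence slope exactly $r_{i_s}$, with the convention $\LM_{i_0}=\LG$.

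Assembling the contributions, each break $i_s\in I$ supplies $|\Phi(\LM_{i_{s-1}})\setminus\Phi(\LM_{i_s})|$ copies of slope $r_{i_s}$, giving the multiplicities asserted in the lemma once one matches the indexing; the remaining slope-$0$ multiplicity on $\Lg$ comes from the Cartan and, if $\LM_k\ne\check{H}$, from the root spaces of $\Lm_k$ on which every $\ad(D_i)$, $i\le k$, vanishes. Verifying that the total multiplicity sums to $\dim\Lg$ is a sanity check. There is essentially no obstacle beyond the genericity step: it is precisely the hypothesis $\LM_{i_s}=C_{\LG}(X_1,\dots,X_{i_s})$ that prevents a root first exiting $\Lm_{i_{s-1}}$ at $i_s$ from accidentally annihilating $X_{i_s}$, and the rest is linear algebra on root spaces together with the tautology that the slope of a one-dimensional formal connection is the order of its leading pole.
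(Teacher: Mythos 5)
Your argument is correct, and it is precisely the computation the paper suppresses (the lemma is stated with no proof, as ``immediate from definition''): decompose $\Lg$ into $\ch$ and the root lines for the fixed Cartan containing $D_1,\dots,D_k$, observe that the residue term $D_{k+1}$ only affects the regular singular part (more precisely, $\ad(D_{k+1})$ preserves each joint eigenspace of the $\ad(D_i)$, so it cannot change the slope filtration), and use genericity to see that a root $\alpha\in\Phi(\LM_{i_{s-1}})$ kills $D_1,\dots,D_{i_s-1}$ and fails to kill $D_{i_s}$ exactly when $\alpha\notin\Phi(\LM_{i_s})$. You are also right that the multiplicities as printed need the reindexing you flag: the correct count is $|\Phi(\LM_{i_{j-1}})\setminus\Phi(\LM_{i_j})|$ copies of $r_{i_j}$ with the convention $\LM_{i_0}=\LG$, and the zero slope occurs $\dim\Lm_k=\rank\Lg+|\Phi(\LM_k)|$ times, which reduces to $\rank\Lg$ only when $\LM_k=\check{H}$ (as in the isoclinic case that the paper mainly uses).
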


	\subsubsection{}
	Recall there exists $\theta\in\LG$ satisfying $\theta^b=1$, $br_i\in\bZ$,
	$\theta(D_i)=\exp(-2\pi\sqrt{-1}r_i)D_i$.
	The torsion inner automorphism $\Ad_\theta$ defines a grading
	\begin{equation}
		\Lg=\bigoplus_{i\in\bZ/b\bZ}\Lg_i.
	\end{equation} 
    
    Since $D_i$'s are $\theta$-eigenvectors, 
    $\theta$ acts on $\LM_i$, $\Lm_i$, $\Lz_i$, $\Lm_i^\der$, $\La_i$.
    For each of these Lie algebras $\fc$, denote $\fc_j=\fc\cap\Lg_j$. 
    Let $m$ be the minimal common denominator of all the $r_i$,
    so that $m|b$.
    Define
    \begin{equation}\label{eq:Loc(R,theta)}
    	\Loc(R,\theta)=
    	\td+(\sum_{i\in I}\sum_{-mr_i\leq j\leq -1}\La_{i,j}t^{-\frac{j}{m}}+\Lm_k)
    	\frac{\td t}{t},
    \end{equation}
    and
	\begin{equation}\label{eq:Loc(X,R,theta)}
		\Loc(X,R,\theta)=
		\td+\sum_{i\in I}(X_it^{-r_i}
		+\sum_{-mr_i+1\leq j\leq-1}\La_{i,j}t^{-\frac{j}{m}}
		+\Lm_k)\frac{\td t}{t}.
	\end{equation}
    
    \begin{lem}\label{l:distinct canonical form}
    	Two equations with the same refined leading terms
    	$\nabla_1,\nabla_2\in\Loc(X,I,R,\theta)$ 
    	give isomorphic formal connections
    	only if these equations \eqref{eq:Loc(X,R,theta)} 
    	have identical irregular parts.
    \end{lem}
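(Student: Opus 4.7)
Plan: The plan is to sharpen the Babbitt--Varadarajan uniqueness ``up to $\LG$-conjugation'' of canonical forms to ``on the nose'' agreement inside the affine family $\Loc(X,R,\theta)$. Both $\nabla_1,\nabla_2$ are already in BV canonical form—at every slope the coefficient lies in some $\Lz_{i_s}\subset\ch$, is semisimple, and commutes pairwise with the others—so an isomorphism $\nabla_1\simeq\nabla_2$ produces a single $g\in\LG(\bC)$ such that $\Ad_g$ carries the coefficient of $\nabla_1$ at every slope to that of $\nabla_2$. I would then argue by induction on $s=1,\ldots,p$ that $g\in\LM_{i_s}$ and that $\nabla_1,\nabla_2$ already agree at every slope $\geq r_{i_{s+1}}$; taking $s=p$ delivers the claim.

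Execution. For the base case, both top-slope coefficients equal $X_{i_1}$, so $\Ad_g(X_{i_1})=X_{i_1}$ forces $g\in\LM_{i_1}=C_{\LG}(X_{i_1})$ by Lemma \ref{l:refined leading term determines Levi seq}; an intermediate slope $-j/m\in(r_{i_2},r_{i_1})$ has coefficient in $\La_{i_1,j}\subset\Lz_{i_1}$, which $g$ fixes pointwise since $\Lz_{i_1}$ is central in $\Lm_{i_1}$, and agreement at that slope follows. For the inductive step, assuming $g\in\LM_{i_s}$ and agreement at all slopes $\geq r_{i_s}$, the coefficient at $r_{i_{s+1}}$ in $\nabla_j$ has the form $X_{i_{s+1}}+C_j$ with $X_{i_{s+1}}\in\La_{i_{s+1}}=\Lz_{i_{s+1}}\cap\Lm_{i_s}^{\der}\subset\Lm_{i_s}^{\der}$ and $C_j\in\sum_{t\leq s}\La_{i_t}\subset\Lz_{i_s}$, where the inclusion $\sum_{t\leq s}\Lz_{i_t}\subset\Lz_{i_s}$ follows from $\LM_{i_t}\supset\LM_{i_s}$. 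Since $g\in\LM_{i_s}$ fixes $\Lz_{i_s}$ pointwise and preserves the reductive decomposition $\Lm_{i_s}=\Lz_{i_s}\oplus\Lm_{i_s}^{\der}$, applying $\Ad_g$ and rearranging yields
\[
\Ad_g(X_{i_{s+1}})-X_{i_{s+1}} \;=\; C_2-C_1,
\]
with left side in $\Lm_{i_s}^{\der}$ and right side in $\Lz_{i_s}$; the trivial intersection $\Lz_{i_s}\cap\Lm_{i_s}^{\der}=0$ in the reductive $\Lm_{i_s}$ forces both sides to vanish. Hence $C_1=C_2$ and $\Ad_g(X_{i_{s+1}})=X_{i_{s+1}}$, which places $g\in\LM_{i_{s+1}}$ by Lemma \ref{l:refined leading term determines Levi seq}. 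Intermediate slopes in $(r_{i_{s+2}},r_{i_{s+1}})$ are handled exactly as in the base case with $\LM_{i_{s+1}}$ in place of $\LM_{i_1}$.

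Main obstacle. I expect the argument to be conceptually routine once BV uniqueness and the algebraic identity $\Lz_{i_s}\cap\Lm_{i_s}^{\der}=0$ are in hand; the real work is the bookkeeping---pinning down exactly which $\La_{i_t,j}$'s contribute at each slope and checking that the resulting total sits in the expected $\Lz_{i_s}$. That the reductive decomposition $\Lm_{i_s}=\Lz_{i_s}\oplus\Lm_{i_s}^{\der}$ cleanly separates the new refined leading term $X_{i_{s+1}}$ from the already-processed lower-level data is precisely what makes the induction go through.
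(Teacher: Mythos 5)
Your proposal is correct and follows essentially the same route as the paper's proof: Babbitt--Varadarajan uniqueness supplies a single $g\in\LG$ conjugating all irregular coefficients, and then an induction down the Levi filtration uses $g\in\LM_{i_s}$ acting trivially on $\Lz_{i_s}$ together with $\Lz_{i_s}\cap\Lm_{i_s}^{\der}=0$ to force both $g\in\LM_{i_{s+1}}$ and equality of the coefficients. The only cosmetic difference is that you separate the jump slopes from the intermediate ones, whereas the paper runs the same argument uniformly over all indices $i\leq k$.
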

    \begin{proof}
    	Write the canonical forms of $\nabla_1,\nabla_2$
    	as in \eqref{eq:canonical form}:
    	\[
    	\nabla_j=\td+(D_{1,j}t^{-r_1}+\cdots+D_{k,j}t^{-r_k}+D_{k+1,j})\frac{\td t}{t},
    	\quad j=1,2,
    	\]
    	Suppose $\nabla_1,\nabla_2$ are isomorphic.
    	Then there exists $g\in\LG$ such that $Ad_g(D_{i,1})=D_{i,2}$ for $r_i\neq0$.
    	Since $D_1=X_1$, $g\in C_{\LG}(X_1)=\LM_1$.
    	Thus $\Ad_g D_{i,1}=D_{i,1}=D_{i,2}$ for $1\leq i<i_2$.
    	Write $D_{i_2,j}=Z_{i_2,j}+X_{i_2}$, $j=1,2$.
    	Note $g\in \LM_1=\LM_{i_2-1}$ centralizes $Z_{i_2,j}\in\Lz_{i_2-1}$.
    	Thus $Ad_g(D_{i_2,1})=D_{i_2,2}$ implies 
    	$Z_{i_2,1}+\Ad_g X_{i_2}=Z_{i_2,2}+X_{i_2}$.
    	Note that $\LM_{i_2-1}$ acts on $\Lm_{i_2-1}^\der$,
    	so that $\Ad_g X_{i_2}-X_{i_2}=Z_{i_2,2}-Z_{i_2,1}\in\Lm_{i_2-1}^\der\cap\Lz_{i_2-1}=0$.
    	We obtain $D_{i_2,1}=D_{i_2,2}$ and $g\in \LM_{i_2}$.
    	Repeat the procedure, we obtain $g\in \LM_k$
    	and $D_{i,1}=D_{i,2}$ for all $i\leq k$.
    \end{proof}

    \subsubsection{}
    Alternatively, we can characterize Levi subgroups $\LM_i$ as follows.
    Let $I_{\diff}$ be the differential Galois group of $D^\times$,
    so that each formal connection $\nabla$
    corresponds to its monodromy representation $\rho_\nabla$ of $I_{\diff}$,
    each formal $\LG$-connection corresponds to a group homomorphism
    $\rho_\nabla:I_{\diff}\rightarrow\LG$.
    Let $I_{\diff}^r$(resp. $I_{\diff}^{r+}$) be 
    the intersection of the kernels of those representations of $I_{\diff}$
    corresponding to formal connections with slopes $<r$(resp. $\leq r$).
    Here $I_{\diff}^0=I_{\diff}$.
    For $r>0$(resp. $r\geq0$), 
    $\rho_\nabla(I_{\diff}^r)$(resp. $\rho_\nabla(I_{\diff}^{r+})$) is a connected torus,
    thus can be conjugated into the maximal torus $\check{H}$.
    \begin{lem}
    	Let $\rho_\nabla:I_{\diff}\rightarrow\LG$
    	be the monodromy representation 
    	of a formal connection $\nabla$.
    	For $r>0$, denote root subsystem
    	\[
    	\Phi_r=\{\alpha\in\Phi(\LG,\check{H})\mid\alpha(\rho_\nabla(I_{\diff}^{r}))=1\}.
    	\]
    	Then $R$ is the set of $r$ where $\Phi_r$ jumps,
    	and $\Phi_{r_i}$, $r_i\in R$ is the root system of $\LM_i$.
    \end{lem}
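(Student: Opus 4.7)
The plan is to reduce everything to the adjoint connection $\nabla^{\Ad}$ and decompose it under the torus $\check{H}$. Since the canonical form \eqref{eq:canonical form} places all $D_i$ in $\ch$, the root decomposition $\Lg=\ch\oplus\bigoplus_{\alpha\in\Phi(\LG,\check H)}\Lg_\alpha$ is stabilized by $\nabla^{\Ad}$, and on each root line $\Lg_\alpha$ the adjoint connection restricts to the one-dimensional equation
\[
\td+\Big(\sum_{i=1}^{k}\alpha(D_i)t^{-r_i}+\alpha(D_{k+1})\Big)\frac{\td t}{t},
\]
whose slope is $s_\alpha:=\max\{r_i:1\leq i\leq k,\ \alpha(D_i)\neq 0\}$, with the convention $s_\alpha=0$ if no such $i$ exists.

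Next I would invoke the defining property of the upper-numbering filtration: a formal connection is killed by $I_{\diff}^{r}$ iff all of its slopes are strictly less than $r$. Applying this to the one-dimensional constituent on $\Lg_\alpha$ identifies the condition $\alpha(\rho_\nabla(I_{\diff}^{r}))=1$ with $s_\alpha<r$, so that
\[
\Phi_r=\{\alpha\in\Phi(\LG,\check H)\mid\alpha(D_i)=0\text{ for every }i\text{ with }r_i\geq r\}.
\]
Recalling $\LM_i=C_{\LG}(D_1,\ldots,D_i)$, the right-hand side is $\Phi(\LM_i)$ for the unique index $i$ with $r_{i+1}<r\leq r_i$, under the convention $r_{k+1}=0$.

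Finally I would track when $\Phi_r$ jumps as $r$ decreases through the values $r_1>r_2>\cdots>r_k>0$. At $r=r_i$ the set $\Phi_r$ changes from $\Phi(\LM_{i-1})$ to $\Phi(\LM_i)$, and this jump is strict precisely when $\LM_i\subsetneq\LM_{i-1}$. As already noted in the proof of Lemma \ref{l:refined leading term determines Levi seq}, this is equivalent to $D_i$ having nonzero projection to $\Lm_{i-1}^{\der}$, i.e.\ to $X_i\neq 0$, i.e.\ to $i\in I$. This identifies the jump set of $\Phi_r$ with $R=\{r_i:i\in I\}$ and gives $\Phi_{r_i}=\Phi(\LM_i)$ for every $i\in I$.

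The only non-formal input is the upper-numbering characterization on one-dimensional formal connections, which is standard. The main subtlety to keep in mind is that the canonical form lives over $\overline F$ with possibly non-integral exponents $r_i$, so the root decomposition of $\nabla^{\Ad}$ is only defined after base change; since both $\rho_\nabla$ and the filtration $I_{\diff}^{r}$ are compatible with passing from $F$ to $\overline F$, and the adjoint connection is defined over $F$, this causes no issue, and the rest of the argument is bookkeeping along the filtration $\LM_0\supseteq\LM_1\supseteq\cdots\supseteq\LM_k$.
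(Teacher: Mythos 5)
Your argument is correct and is essentially a fleshed-out version of the paper's one-sentence proof: the paper simply observes that $\rho_\nabla|_{I_{\diff}^r}$ is determined by the coefficients $D_i$ with $r_i\geq r$, and your root-line slope computation on $\nabla^{\Ad}$ is exactly the detailed justification of that observation (the only cosmetic caveat being that $D_{k+1}$ need not lie in $\ch$, which is irrelevant here since only the irregular part matters for $r>0$).
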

    \begin{proof}
    	This is clear from the observation that
    	the restricted representation 
    	$\rho_\nabla:I_{\diff}^r\rightarrow\LG$
    	is determined only by the coefficients $D_i$, $r_i\geq r$
    	in the canonical form \eqref{eq:canonical form}.
    \end{proof}
    
    We will see that the components 
    $(X_{i_s}t^{-r_i}+\sum_{\ell=1}^{s-1}\sum_{j=-mr_{i_{s-1}}+1}^{-mr_{i_s}}Y_{i_\ell,j}t^{\frac{j}{m}})\frac{\td t}{t}\in\Lz_i(\!(t)\!)\frac{\td t}{t}$, 
    $Y_{i_\ell,j}\in\fa_{i_\ell,j}$ $i_s\in I$
    should be dual to the \emph{Howe factorization} 
    \cite[Definition 3.6.2]{KalethaRegular},
    see discussion in 
    \S\ref{ss:reg supercus},\S\ref{ss:dual refined leading terms}.

    \quash{\subsubsection{}\label{sss:canonical form}
    Lastly we construct a family of canonical forms from scratch for future use.
    Fix a maximal torus $\check{H}$ with Lie algebra $\ch$
    and slopes $r_1>r_2>\cdots>r_k>r_{k+1}=0$.
    Let $m$ be the minimal common denominator of $r_i$'s, $mr_i\in\bZ$.
    Assume there is a conjugacy class of order $m$ in the Weyl group,
    represented by an element $w_m\in W(\check{H})$,
    which acts on $\ch$.
    Denote the grading defined by the action of $w_m$ on $\ch$ by
    \begin{equation}
    	\ch=\bigoplus_{i\in\bZ/m}\ch_i.
    \end{equation}

    Consider an equation
    \begin{equation}\label{eq:eq from grading}
    	\nabla=\td+(X_{r_1}t^{-r_1}+\cdots+X_{r_k}t^{-r_k}+X_{r_{k+1}})\frac{\td t}{t}
    \end{equation}
    satisfying $X_{r_i}\in\ch_{-mr_i}$.
    For any lift $\dot{w}$ of $w$,
    $\dot{w}\cdot X_{r_i}=\exp(-2\pi i r_i)X_{r_i}$.
    From the proof of \cite[Proposition 9.8]{BV},
    there exists element $a\in\LG$ centralizer $X_{r_i}$'s
    and integer $b$ divisible by $m$
    such that $(a\dot{w})^b=1$.
    By the same proposition in \emph{loc. cit.},
    $\nabla$ can be gauge-transformed to a formal connection over $F$.
    Thus we can define
    \begin{equation}\label{eq:Loc from grading}
    	\Loc(\ch,r_i)=\{\td+(X_{r_1}t^{-r_1}+\cdots+X_{r_k}t^{-r_k}+X_{r_{k+1}})\frac{\td t}{t}\mid X_{r_i}\in\ch_{-mr_i}\}/\sim\subset\Loc_{\LG}(D^\times),
    \end{equation}
    where $\sim$ means the $\LG(\overline{F})$-gauge equivalence.}

    \subsection{Oper canonical forms}\label{ss:oper can form}
    A formal connection can always be represented in a non-unique way
    by equations called the \emph{oper canonical forms},
    which we review.
    
    A $\LG$-oper over a space is 
    a $\LG$-bundle with a connection and a Borel reduction
    satisfying a strong transversality condition, c.f. \cite{BD,FGLocal}.
    Denote by $\Op_{\Lg}(D^\times)$ the scheme of opers over $D^\times$.
    Let $E_{-\alpha}\in\Lg_{-\alpha}$
    be a basis of the negative simple root subspace, 
    $\alpha\in\Delta_{\LG}$ a simple root.
    Then $p_{-1}=\sum_{\alpha\in\Delta_{\LG}}E_{-\alpha}$
    is a regular nilpotent element.
    Let $\{p_{-1},2\crho_{\LG},p_1\}$ is the unique principal $\mathfrak{sl}_2$-triple,
    and $p_i$ be a homogeneous basis of $\Lg^{p_1}$
    with weight $d_i-1$ under the action of $\crho$,
    $1\leq i\leq n$.
    Any $\chi\in\Op_{\Lg}(D^\times)$
    can be uniquely represented by 
    an \emph{oper canonical form} as follows:
    \begin{equation}\label{eq:oper form}
    	\nabla_\chi=\td+(p_{-1}+\sum_{i=1}^n v_i(t)p_i)\td t,\quad v_i(t)=\sum_j v_{ij}t^{-j-1}\in\bC(\!(t)\!),\ v_{ij}\in\bC.
    \end{equation}
    In particular, we have isomorphism
    \[
    \Op_{\Lg}(D^\times)\simeq\bC(\!(t)\!)^n,\qquad
    \nabla\mapsto(v_1(t),...,v_n(t)).
    \]
    
    For an oper $\chi$ with canonical form \eqref{eq:oper form}, 
    its \emph{slope} is defined to be
    \begin{equation}\label{eq:oper slope}
    	s(\chi)=\sup\{0,\sup_{1\leq i\leq n}\{\frac{-\ord_t v_i(t)}{d_i}-1\}\}
    \end{equation} 
    where we let $\ord_t(0)=+\infty$.
    
    We have a forgetful functor from opers to 
    the underlying local system forgetting the Borel reduction:
    \begin{equation}
    	p:\Op_{\Lg}(D^\times)\rightarrow\Loc_{\LG}(D^\times).
    \end{equation}
    Equivalently, $p$ sends an oper canonical form $\nabla_\chi$
    to its $G(\!(t)\!)$-gauge equivalence class.
    It is proved in \cite{FZOper} that $p$ is surjective,
    but it has infinite dimensional fibers.
    
    By \cite[Proposition 1]{CK},
    the slope of an oper coincides with the slope of its underlying connection,
    so that $s(\chi)$ is constant on the fibers of $p$.

    \subsection{Relationship between connection canonical forms and oper canonical forms}
    \subsubsection{}\label{sss:algorithm from oper form to canonical form}
    We describe another way deducing the sequence $r_{i_1},...,r_{i_k}$
    and Levi subgroups $\LM_{i_1},...,\LM_{i_k}$
    from a formal connection starting from its oper canonical form.
    Denote $r_{(j)}=r_{i_j}$, $\LM_{(j)}=\LM_{i_k}$, 
    and similarly for $\Lm_{(j)},\Lz_{(j)},\Lm_{(j)}^\der$.
    
    Let $\nabla$ be an irregular formal connection.
    Let $\chi=\td+p_{-1}+\sum_{i,j}v_{i,j}t^{-j-1}p_i\td t$
    be an oper canonical form of $\nabla$.
    Assume $\nabla$ has slope $r_{(1)}=r_1$.
    
    By the oper slope formula \eqref{eq:oper slope},    
    $\chi$ is of the form
    \begin{equation}
    	\td+(p_{-1}+\sum_{i=1}^n\sum_{j+1\leq d_i(r_1+1)} v_{ij}t^{-j-1}p_i)\td t.
    \end{equation}
    Applying gauge transform by $\crho(t^{r_1+1})$ to the above equation,
    we obtain
    \begin{equation}\label{eq:0}
    	\td+(t^{-r_1}p_{-1}+\sum_{i=1}^n\sum_{-r_1\leq -j+(d_i-1)(r_1+1)} v_{ij}t^{-j+(r_1+1)(d_i-1)}p_i-(r_1+1)\crho)\frac{\td t}{t}.
    \end{equation}
    Adopting the algorithm in \cite[Proposition 4.6]{BV}, 
    we can gauge transform the above into a canonical form by induction.
    We sketch the algorithm in this situation in the following.
     
    First,
    by the slope formula, there exists $1\leq i\leq n$ and $j$
    such that $j+1=d_i(r_1+1)$ and $v_{i,j}\neq0$.
    Let
    \begin{equation}
    	D_{r_1}=p_{-1}+\sum_{j+1=d_i(r_1+1)}v_{i,j}p_i.
    \end{equation}
    Let $D_{r_1}=D_{r_1,s}+D_{r_1,n}$ be Jordan decomposition
    and denote $X_{r_1}=D_{r_1,s}$, $Y_{r_1}=D_{r_1,n}$.
    Let $\Lm_1=\fz_{\Lg}(X_{r_1})$.
    Observe that $D_{r_1}$ is in the Kostant section, so that it is regular.
    Equivalently, $Y_{r_1}$ is regular nilpotent in $\Lm_1$.
    Let $\cO_{\overline{F}}$ be the integer ring of algebraic closure $\overline{F}$,
    $\cO_{\overline{F}}^+$ its prime ideal so that
    $\cO_{\overline{F}}/\cO_{\overline{F}}^+\simeq\bC$.
    Let $\LG(\cO_{\overline{F}})_{0+}$ be the kernel of 
    the reduction map $\LG(\cO_{\overline{F}})\rightarrow\LG(\bC)$.
    Picking an $\mathfrak{sl}_2$-triple contaning $X_{r_1}$
    and applying gauge transforms by elements in $\LG(\cO_{\overline{F}})_{0+}$,
    we can transfer \eqref{eq:0} into the form
    \begin{equation}
    	\nabla_1\in\td+((X_{r_1}+Y_{r_1})t^{-r_1}+t^{-r_1}\Lm_{r_1}(\cO_{\overline{F}})_{0+})\frac{\td t}{t}\subset\td+\Lm_1(\overline{F})\td t.
    \end{equation}
    
    We decompose $\nabla_1$ according to 
    the Levi decomposition $\Lm_1=\Lz_{r_1}+\Lm_1^\der$
    and write $\nabla_1=\nabla_{\Lz_1}+\nabla_{\Lm_1^\der}$.
    Recall $\Lz_1=\La_1$.
    Let $\check{Z}_1\subset\check{H}$ be the subtorus with Lie algebra $\Lz_1$.
    We can transfer $\nabla_1$ into canonical form using $\check{Z}_1(\cO_{\overline{F}})_{0+}$,
    so we assume it already is.
    Here $X_{r_1}$ is the leading term of $\nabla_{\Lz_1}$.
    
    The $\LM_1^\der$-connection $\nabla_{\Lm_1^\der}$
    has regular nilpotent leading term $Y_{r_1}$.
    Take an $\mathfrak{sl}_2$-triple $\{e_1,f_1,h_1\}$ inside $\Lm_1^\der$
    where $f_1=Y_{r_1}$.
    Using gauge transform by $\LM_1^\der(\cO_{\overline{F}})_{0+}$,
    we can change $\nabla_{\Lm_1^\der}$ into the form
    \[
    \nabla'=\td+(Y_{r_1}t^{-r_1}+t^{-r_1}(\Lm_1^\der)^{e_1}(\cO_{\overline{F}})_{0+})\frac{\td t}{t}.
    \]
    
    Let $(\Lm_1^\der)^{e_1}=\bigoplus_{1\leq k\leq q}\bC Z_k$ be a homogeneous basis,
    where $[h_1,Z_k]=\lambda_k Z_k$ for integer $\lambda_k\geq0$.
    This is exactly an oper canonical form for $\Lm_1^\der$.
    Denote $\Lambda(Y_{r_1})=\sup_k(\frac{1}{2}\lambda_k+1)$.
    Write 
    \[
    \nabla'=\td+(Y_{r_1}t^{-r_1}+\sum_{r>-r_1}\sum_{1\leq k\leq q}a_{r,k}t^rZ_k)\frac{\td t}{t},\qquad a_{r,k}\in\bC.
    \]
    In the above $r$ sums over rational numbers such that $a_{r,k}\neq0$.
    Define
    \begin{equation}
    	\delta(\nabla')=\inf\{\frac{r-r_1}{\frac{1}{2}\lambda_k+1}\mid
    	0<r+r_1<\Lambda(Y_{r_1})|r|,\ a_{r,k}\neq0\}.
    \end{equation}
    If there is no such $a_{r,k}=0$, let $\delta(\nabla')=\infty$.
    
    If $\delta(\nabla')\neq\infty$,
    we apply gauge transform by 
    $t^{-\frac{\delta(\nabla')}{2}h}$ to $\nabla'$.
    Otherwise apply gauge transform by 
    $t^{-\frac{r_1}{2}h}$ to $\nabla'$.
    The resulting connection $\nabla''$ is either regular singular,
    or is of order $r_{(2)}=r_1-\delta(\nabla')$
    whose leading term is of the form
    \begin{equation}\label{eq:1}
    	D_{r_{(2)}}=Y_{r_1}+\sum_{k,r}a_{r,k}Z_k.
    \end{equation}

    In $\nabla''$ becomes regular singular, we stop.
    Otherwise,
    since $Y_{r_1}$ is regular nilpotent in $\Lm_1^\der$,
    $\{e_1,f_1=Y_{r_1},h_1\}$ is a principal $\mathfrak{sl}_2$-triple of $\Lm_1^\der$,
    so that \eqref{eq:1} is contained in a Kostant section of $\Lm_1^\der$.
    If $\nabla''$ is not regular singular, 
    then $\delta(\nabla')\neq\infty$,
    there exists $a_{r,k}\neq0$ in \eqref{eq:1},
    so that \eqref{eq:1} is a non-nilpotent regular element.
    We can now just repeat the previous procedure:
    take Jordan decomposition $D_{r_{(2)}}=X_{r_{(2)}}+Y_{r_{(2)}}$,
    let $\Lm_{(2)}=\Lz_{\Lm_1}(X_{r_2})$ with center $\Lz_{(2)}$.
    Note that the center of $\Lz_{\Lm_1^\der}(X_{r_2})$
    equals to $\La_{(2)}=\Lz_{(2)}\cap\Lm_1^\der$.
    Denote the canonical form of the 
    direct summand of $\nabla''$ inside $\La_{(2)}$ 
    by $\nabla_{\La_{(2)}}$.
    
    This procedure produces a sequence of slopes $r_{(1)}>r_{(2)}>\cdots>r_{(p)}$
    and Levi subalgebras $\Lm_1=\Lm_{(1)}\supset\Lm_{(2)}\supset\cdots\supset\Lm_{(p)}$.
    We conclude that the irregular part of the canonical form of $\nabla$ is
    \begin{equation}
    	\bigoplus_{s=1}^p\nabla_{\La_{(s)}}.
    \end{equation}

    \subsubsection{}
    Denote 
    \begin{equation}\label{eq:Op(X,R,theta)}
    	\Op_{\Lg}(X,R,\theta)=p^{-1}(\Loc(X,R,\theta)).
    \end{equation}
    Then the canonical form of any such oper would induce 
    slopes $r_{(1)},...,r_{(p)}$ and Levi subgroups $\LM_{(1)},...,\LM_{(p)}$
    as in \S\ref{sss:algorithm from oper form to canonical form}.  
    
    \begin{prop}\label{p:supp of oper form}
    	\begin{itemize}
    		\item [(i)]
    		The oper canonical form \eqref{eq:oper form} of an oper
    		$\chi\in\Op_{\Lg}(X,R,\theta)$ 
    		satisfies $v_{i,j}=0$ for $-j+(d_i-1)(r_1+1)<-r_{1}$.
    		
    		\item [(ii)]
    		Denote the Coxeter number of $\LM_{(s)}$ by $h_s$,
    		which is set to be $1$ if $\LM_{(s)}$ is a torus.
    		The direct summand $\nabla_{\La_{(s)}}$ of 
    		the canonical form of $\chi$ in (i)
    		depends only on $v_{i,j}$ for 
    		$-r_1\leq -j+(d_i-1)(r_1+1)\leq\sum_{a=1}^{s-1}(h_a-1)(r_{(a)}-r_{(a+1)})$.
    		In particular,
    		the irregular part of the canonical form of $\chi$
    		depends only on $v_{i,j}$ for 
    	    $-r_1\leq -j+(d_i-1)(r_1+1)\leq\sum_{a=1}^{p-1}(h_a-1)(r_{(a)}-r_{(a+1)})$.
    	    Moreover,
    	    the regular part of the canonical form, therefore the whole canonical form,
    	    depends only on $v_{i,j}$ for
    	    $-r_1\leq -j+(d_i-1)(r_1+1)\leq\sum_{a=1}^{p-1}(h_a-1)(r_{(a)}-r_{(a+1)})
    	    +(h_p-1)r_{(p)}$.
    	\end{itemize}
    \end{prop}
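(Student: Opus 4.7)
My plan is to prove (i) as a direct consequence of the slope formula \eqref{eq:oper slope}, and to deduce (ii) by induction on the stage $s$ of the extraction algorithm of \S\ref{sss:algorithm from oper form to canonical form}. For (i), since $\chi \in \Op_{\Lg}(X,R,\theta) = p^{-1}(\Loc(X,R,\theta))$, its underlying connection has slope $r_1$, hence by Proposition 1 of \cite{CK} the oper slope satisfies $s(\chi) = r_1$. Formula \eqref{eq:oper slope} then gives $\ord_t v_i(t) \geq -d_i(r_1+1)$ for every $i$, and since $v_{ij}$ is the coefficient of $t^{-j-1}$, its nonvanishing forces $-j-1 \geq -d_i(r_1+1)$, which rearranges to $-j+(d_i-1)(r_1+1) \geq -r_1$. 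The observation powering (ii) is that after the gauge transform by $\crho(t^{r_1+1})$ producing \eqref{eq:0}, the coefficient $v_{ij}$ sits in front of $p_i\frac{\td t}{t}$ at $t$-adic order exactly $-j+(d_i-1)(r_1+1)$, so the stated range translates into a bound on $t$-adic orders of coefficients in \eqref{eq:0} that can possibly contribute to $\nabla_{\La_{(s)}}$.

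For (ii) I induct on $s$, setting $B_s := \sum_{a=1}^{s-1}(h_a-1)(r_{(a)}-r_{(a+1)})$. Base case $s=1$: the transforms in $\LG(\cO_{\overline{F}})_{0+}$ used to pass from \eqref{eq:0} to $\nabla_1$ are congruent to $1$ modulo $\cO_{\overline{F}}^+$, so both $\Ad_g$ and $g^{-1}\td g$ can only preserve or raise $t$-adic orders; hence each coefficient of $\nabla_1$ at order $r$ depends only on $v_{ij}$ with shifted order $\leq r$. Projecting to the $\La_1$-component and killing strictly positive-order terms by abelian transforms in $\check{Z}_1(\cO_{\overline{F}})_{0+}$ leaves a canonical form $\nabla_{\La_{(1)}}$ depending only on shifted orders in $[-r_1, 0] = [-r_1, B_1]$. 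For the inductive step, suppose the Kostant-section form $\nabla' = \td + (Y_{r_{(s-1)}} t^{-r_{(s-1)}} + \sum_{r,k} a_{r,k} t^r Z_k)\frac{\td t}{t}$ has been obtained using only $v_{ij}$ with shifted order $\leq B_{s-1}$. Stage $s$ then applies the gauge transform $t^{-\frac{\delta}{2} H^{(s-1)}}$, where $H^{(s-1)}$ is the semisimple element of the principal $\mathfrak{sl}_2$ inside $\Lm_{(s-1)}^{\der}$; it acts on each $Z_k$ with weight $\frac{\lambda_k}{2} \leq h_{s-1}-1$, since the top weight of that principal $\mathfrak{sl}_2$ on $(\Lm_{(s-1)}^{\der})^{e^{(s-1)}}$ is $2(h_{s-1}-1)$ by the standard characterization of the Coxeter number. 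Hence this transform raises $t$-orders by at most $\delta(h_{s-1}-1) = (h_{s-1}-1)(r_{(s-1)}-r_{(s)})$, and adding to $B_{s-1}$ gives $B_s$; the subsequent reduction of the $\La_{(s)}$-component to canonical form by $\check{Z}_{(s)}(\cO_{\overline{F}})_{0+}$ contributes no additional shift.

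The main obstacle is the bookkeeping of how the various $\LG(\cO_{\overline{F}})_{0+}$ and $\LM_{(a)}^{\der}(\cO_{\overline{F}})_{0+}$ transforms within and across stages interact with the shifted-order filtration; my argument rests on the uniform fact that every such transform is congruent to $1$ modulo $\cO_{\overline{F}}^+$ and therefore can only preserve or raise $t$-adic orders, never pulling in contributions from $v_{ij}$ of larger shifted order. For the final assertion on the full canonical form, the same induction extends by one additional step that handles the regular-singular reduction of $\nabla_{\Lm_{(p)}^{\der}}$ by a gauge transform of the form $t^{-\frac{r_{(p)}}{2} H^{(p)}}$; this contributes a final shift of size $(h_p-1)r_{(p)}$, producing the bound stated.
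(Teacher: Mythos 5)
Your proposal is correct and follows essentially the same route as the paper: part (i) from the oper slope formula, and part (ii) by tracking, through the algorithm of \S\ref{sss:algorithm from oper form to canonical form}, that the only order-lowering gauge transforms are the $t^{-\frac{\delta}{2}h}$ steps, each shifting by at most $(h_a-1)(r_{(a)}-r_{(a+1)})$ since the weights of the principal $\mathfrak{sl}_2$ on the Kostant slice are bounded by $2(h_a-1)$. The only cosmetic difference is that for the regular part you unfold the final gauge transform by $t^{-\frac{r_{(p)}}{2}h}$ directly, where the paper simply cites \cite[Proposition 4.6.(a)]{BV}.
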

    \begin{proof}
    	Part (i) follows from the oper slope formula \eqref{eq:oper slope}.
    	
    	For part (ii),
    	in the algorithm \S\ref{sss:algorithm from oper form to canonical form},
    	beginning from equation \eqref{eq:0},
    	coefficients $v_{i,j}$ with $-j+d_i(r_1+1)>0$ get involved
    	in the canonical form only when we apply gauge transform by
    	$t^{-\frac{\delta(\nabla')}{2}h}$.
    	Precisely, 
    	coefficients of $t^{q}\td t$ with $q\geq0$ becomes coefficients of
    	$t^{q-(d_{i,(a)}-1)(r_{(a)}-r_{(a+1)})}\td t$,
    	where $d_{i,(a)}$ is a fundamental degree of $\LM_{(a)}$
    	which is at most $h_a$.
    	The irregular part of part (ii) follows immediately.
    	The dependence of the regular part of canonical form 
    	follows from \cite[Proposition 4.6.(a)]{BV}.
    \end{proof}
    
    \begin{rem}
    		Proposition \ref{p:supp of oper form}.(ii)
    		refines the bound given by \cite[Theorem 9.7]{BV}
    		applied to \eqref{eq:0}.
    \end{rem}

    \section{Refined leading terms of tame elliptic pairs}\label{s:sc refined leading term}
    We will refine Conjecture \ref{c:main}
    for a large family of regular supercuspidal representations
    that includes the toral supercuspidal representations.
    In this section we review their construction
    and define their refined leading terms.
    
    \subsection{Recollection on certain regular supercuspidal representations}\label{ss:reg supercus}
    We first review the construction of regular supercuspidal representations
    after \cite{KalethaRegular}.
    
    \subsubsection{}
    Let $S\subset G_F$ be an elliptic maximal torus over $F=\bC(\!(t)\!)$
    that splits over $E=\bC(\!(u)\!)$, $u=t^{\frac{1}{m}}$.
    Explicitly,
    fix a maximal torus $H\subset G$, 
    let $w_m\in W(G_E,H_E)$ be an elliptic element of order $m$ in the Weyl group,
    $\sigma:u\mapsto\zeta_mu$ a generator of $\mathrm{Gal}(E/F)$. 
    We assume $S\simeq(\mathrm{Res}_{E/F}(H_E))^{w_m\times\sigma^{-1}}$,
    so that $S_E\simeq H_E$.
    Let $\phi:S\rightarrow\bC^\times$ be a character of depth $\nu$
    satisfying conditions in \cite[Definition 3.7.5]{KalethaRegular}.
    Such $(S,\phi)$ is called a \emph{tame elliptic pair}.
    We further assume that
    \begin{equation}\label{eq:Phi_0+=empty}
    	\{\alpha\in\Phi(G,S)\mid\phi(N_{E/F}(\check{\alpha}(E^\times_{0+})))=1\}=\varnothing.
    \end{equation}
    where $E^\times_{0+}=1+u\bC[\![u]\!]$.
    This condition is to avoid the appearance of 
    the depth zero supercuspidal representation in Yu's datum.
    It corresponds to those L-parameters such that the
    centralizers of the images of the wild inertia group are tori.
    
    We associate a Yu datum to $(S,\phi)$ as in 
    \cite[Proposition 3.7.8]{KalethaRegular}.
    We slightly modify the notations to simplify them in our setting
    and to be more consistent with the notations
    on the spectral side.
    First, $S$ has a unique point $x$ 
    in its Bruhat-Tits building,
    giving a point in the building of $G$.
    We can then define a sequence of root subsystems
    \[
    \Phi_r=\{\alpha\in\Phi(G,S)\mid\phi(N_{E/F}(\check{\alpha}(E^\times_r)))=1\},
    \]
    where $E_r^\times=1+u^{\lceil mr\rceil}\bC[\![u]\!]$.
    By Lemma 3.6.1 of \emph{loc. cit.},
    $\Phi_r$ is a Levi subsystem.
    Let $R=\{r_{(1)},r_{(2)},...,r_{(p)}\}$ be real numbers 
    $r_{(1)}>r_{(2)}>\cdots>r_{(p)}>0$
    where $\Phi_r$ takes a jump.
    Note $r_{(1)}=\nu$.
    Let $G_i'\subset G(E)$ be the standard Levi subgroup containing $S_E$
    with root system $\Phi_{r_{(i)}}$.
    Then $G_i=G(F)\cap G_i'$ define twisted Levi subgroups of $G(F)$:
    \[
    G=G_0\supset G_1\supset G_2\supset\cdots\supset G_p=S.
    \footnote{Our convention on indexing is different from \cite{KalethaRegular,FintzenTame} in order to be more consistent with the indexing on the dual side.}
    \]
    
    By \cite[Proposition 3.6.7]{KalethaRegular}, 
    character $\phi$ has a \emph{Howe factorization}
    as in Definition 3.6.2 of \emph{loc. cit.}.
    It consists of a sequence of characters 
    $\phi_i:G_i\rightarrow\bC^\times$, $1\leq i\leq p$
    satisfying
    \begin{equation}
    	\phi=\prod_{i=1}^p\phi_i|_{S(F)}.
    \end{equation}
    Here two of $\phi_i$'s in \emph{loc. cit.} are trivial
    because of our assumptions \eqref{eq:Phi_0+=empty} and $\nu=r_{(1)}$. 
    For $1\leq i\leq p$,
    $\phi_i$ is assumed to be trivial on $G_{i+1,sc}(F)$ where 
    $G_{i+1,sc}$ is the simply-connected cover of the derived group of $G_{i+1}$,
    and $\phi_i$ has depth $r_{(i)}$.
    Moreover, $\phi_i$ needs to be $G_i$-generic in the sense of \cite[\S9]{Yu},
    which we recall below.
    
    Let $Z_i=Z(G_i)$, with Lie algebra $\fz_i\subset\fg_i$.
    The dual $\fz_i^*$ can be naturally identified via restriction with
    the subspace of $\fg_i^*$ fixed under coadjoint action of $G_i$. 
    Also, $\fg_i^*$ can be identified via restriction with
    the subspace of $\fg_{i-1}^*$ fixed by $Z_i$.
    Thus $\fz_i^*\subset\fg_i^*\subset\fg_{i-1}^*$.
    
    Denote by $\fz_{i,-r}^*$, $\fg_{i,-r}^*$, $\fg_{i-1,-r}^*$ 
    the Moy-Prasad filtration on the dual spaces.
    For example, $\fg_{i,-r}^*$ is the subspace of $X\in\fg_i^*$
    satisfying $\ord_t(X(\fg_{i,r+}))>0$.
    We say $\phi_i:G_i\rightarrow\bC^\times$ is $G_{i-1}$-generic of depth $r_{(i)}$
    if $\phi_i$ is trivial on $G_{i,r_{(i)}+}$, non-trivial on $G_{i,r_{(i)}}$,
    and its differential lands inside $\td\phi_i\in\fz_{i,-r_{(i)}}^*$ 
    such that 
    \begin{equation}\label{eq:G-generic}
    	\mathrm{ord}(\td\phi_i(H_\alpha))=-r_{(i)},\qquad
    	\forall\ \alpha\in\Phi(G_{i-1},S)\backslash\Phi(G_i,S).
    \end{equation}
    Here for a root $\alpha$ with coroot $\check{\alpha}:\bGm\rightarrow S$,
    $H_\alpha=d\check{\alpha}(1)\in\fg_{i-1}$.
    This is condition GE1 in \cite[\S8]{Yu},
    and by Lemma 8.1 of \emph{loc. cit.}
    GE1 implies GE2 of \cite[\S8]{Yu} in our case.

    \subsubsection{}\label{sss:Yu construction}
    Denote
    \begin{equation}
    	(G_i)_{x,r,r'}=G(F)\cap\langle T(E)_r,U_\alpha(E)_{x,r},U_\beta(E)_{x,r'}\mid
    	\alpha\in\Phi(G_i,S),\beta\in\Phi(G_i,S)\backslash\Phi(G_{i+1},S)\rangle.
    \end{equation}
    Let $(G_i)_{x,r,r'+}=\cup_{s>r'}(G_i)_{x,r,s}$.
    The quotient $V_i=(G_i)_{x,r_{(i+1)},r_{(i+1)}/2}/(G_i)_{x,r_{(i+1)},r_{(i+1)}/2+}$ is commutative.
    Extend $\phi_{i+1}$ to $G_i$ via projection with respect to root subspaces.
    The pairing $\langle a,b\rangle=\phi_{i+1}(aba^{-1}b^{-1})$
    defines a symplectic form on $V_i$.
    Choose a Lagrangian subspace $\fm_i\subset V_i$.
    Denote by $(G_i)_{x,r_{(i+1)},r_{(i+1)}/2,\fm_i}$ 
    the preimage of $\fm_i$ in $(G_i)_{x,r_{(i+1)},r_{(i+1)}/2}$
    \footnote{In the usual Yu's construction, instead of making a Lagrangian,
    Heisenberg representation and Weil representation are used.
    We make this change so that the eventual representation 
    is induced from a character. 
    The same change is made in \cite{BBMY,JKY}}.
    Denote
    \begin{equation}
    	J=S_{0+}\prod_{i=0}^{p-1}(G_i)_{x,r_{(i+1)},r_{(i+1)}/2,\fm_i},
    \end{equation}
    where we set $r_{(p+1)}=0$.
    Note that by our assumption,
    $(G_p)_x=S_0\subset J$.
    The quotient $S_0/S_{0+}$
    is the fixed points of a maximal torus by 
    a regular elliptic element of Weyl group,
    which is a nontrivial finite group in general.
    Here we use $S_{0+}$ instead of $(G_p)_x$ 
    so that $J$ is connected and pro-unipotent.
    
    Consider character $\tphi=\prod_{j=1}^p\phi_i$ on $J$.
    Here $\phi_i$ are extended to $J$ as follows.
    
    On $(G_j)_{x,r_{(j+1)},r_{(j+1)}/2,\fm_j}\subset(G_j)_{x,r_{(j+1)},r_{(j+1)}/2}$
    where $i\leq j$, $G_i\supset G_j$, 
    $\phi_i$ is restricted from $G_i$.
    
    For $i>j$ where $G_i\subset G_j$,
    the character $\phi_i$ is first extended to $G_{x,r_{(i+1)}/2+}$ via projection
    \begin{align*}
    G_{x,r_{(i+1)}/2+}/G_{x,r_{(i+1)}+}
    \simeq&\fg_{x,r_{(i+1)}/2+}/\fg_{x,r_{(i+1)}+}\\
    \rightarrow&
    (\fg_i)_{x,r_{(i+1)}/2+}/(\fg_i)_{x,r_{(i+1)}+}\simeq (G_i)_{x,r_{(i+1)}/2+}/(G_i)_{x,r_{(i+1)}+},
    \end{align*}
    where the projection in the middle is via root subspace decomposition
    \[
    \fg(E)=\fg_i(E)\oplus\bigoplus_{\alpha\in\Phi(G,S)-\Phi(G_i,S)}\fg_\alpha(E).
    \]
    Then restrict the above extension of $\phi_i$
    to $(G_j)_{x,r_{(j+1)},r_{(j+1)}/2,\fm_j}
    \subset(G_j)_{x,r_{(j+1)}/2}\subset(G_j)_{x,r_{(i+1)}/2+}\subset G_{x,r_{(i+1)}/2+}$.

    \subsection{Refined leading terms}
    Denote $\fs=\Lie(S)$,
    $\fs_0=\Lie(S_0)=\Lie(S_{0+})=\fs_{0+}$.
    The character $\phi$ induces a vector $\td\phi\in\fs^*$.
    As on the spectral side, denote
    \[
    \fa_i:=\fz_i\cap\fg_{i-1}^\der,\quad \fz_i=\bigoplus_{1\leq j\leq i}\fa_j,\quad
    \fz_{i,-r_{(i)}}^*=\bigoplus_{1\leq j\leq i}\fa_{j,-r_{(j)}}^*.
    \]
    Note $\fa_1=\fz_1$, $\fz_p=\fs$.
    Here $\td\phi|_{\fa_i}$ has depth $r_{(i)}$.
    
    The point $x$ in the building of $S$ induces a grading on $\fs_E$ of order $m$,
    which induces a grading on $\fs^*$,
    allowing us to define a section of $\fs_r^*/\fs_{r+}^*$ into $\fs^*$.
    In particular, $\fa_{j,-r_{(j)}}^*$ has an induced grading with a summand isomorphic to
    $\fa_{j,-r_{(j)}}^*/\fa_{j,-r_{(j)}+}^*$.
    Denote by $\phi_{r_{(i)}}$ 
    the projection of $\phi$ first to $\fa_{j,-r_{(j)}}^*$
    then to this summand isomorphic to $\fa_{j,-r_{(j)}}^*/\fa_{j,-r_{(j)}+}^*$.
    
    We call tuple $\{\phi_{(1)},...,\phi_{(p)}\}$
    the \emph{refined leading terms} of $\phi$.

    \section{A local geometric Langlands for some regular supercuspidal representations}\label{s:regular sc conj}
    
    We formulate a refinement of Conjecture \ref{c:main}
    for those regular supercuspidal representations
    defined in \S\ref{s:sc refined leading term}.
    
    \subsection{Dual refined leading terms}\label{ss:dual refined leading terms}
    Take a tame elliptic pair $(S,\phi)$ 
    satisfying extra assumptions in \S\ref{s:sc refined leading term},
    such that the unique point $x$ in its building
    is in the closure of fundamental alcove.
    As in \S\ref{s:sc refined leading term},
    we associate to $(S,\phi)$
    the tuple $r_{(1)}>\cdots>r_{(p)}>0$
    and twisted Levi subgroups $G_1\supset\cdots\supset G_p=S$.
    The denominator $m$ is an elliptic number of the Weyl group $W(G,S)$.
    We assume  $mr_{(j)}\in\bZ$.
    
    Let $R=\{r_{(1)},...,r_{(p)}\}$.
    Fix a maximal torus $\check{H}\subset\LG$
    and isomorphism with dual torus $\check{H}\simeq\check{S}$,
    as well as their Lie algebras $\ch\simeq\check{\fs}$.
    Let $E=\bC(\!(u)\!)$, $u=t^{\frac{1}{m}}$,
    and generator $\sigma(u)=\zeta_mu$ of Galois group $\mathrm{Gal}(E/F)$.
    There is elliptic element $w_m\in W$ of order $m$
    such that the isomorphism $\ch\otimes_\bC E\simeq\fs^*\otimes_F E$
    is compatible with the 
    action of $w_m\times\sigma^{-1}$ on the left 
    and the Galois action of $\sigma$ on the right.
    Consider the grading defined by $\Ad_{w_m}$:
    \[
    \ch=\bigoplus_{i\in\bZ/m\bZ}\ch_i.
    \]
    Then we have isomorphism
    \[
    \ch_{-mr_{(j)}}\simeq\fs_{-r_{(j)}}^*/\fs_{-r_{(j)}+}^*
    \]
    for any $1\leq j\leq p$.
    
    Let $\LM_{(j)}\subset\LG$ be the standard Levi subgroup containing $\check{H}$
    that is dual to $G_j$, with Lie algebra $\Lm_{(j)}$.
    By letting $\LM_{i_j}=\LM_{(j)}$,
    we can define Lie algebras $\La_{(j)}$, $\Lz_{(j)}\subset\ch$ from $\Lm_{(j)}$ 
    as in \S\ref{s:conn refined leading terms}.
    We have isomorphism $\Lz_{(j)}\otimes_\bC E\simeq\fz_j(E)^*$. 
    Here $\fz_j^*$ can be identified with the subspace of $\fg_j^*$ fixed by $G_j$,
    which embeds into $\fs_j^*$ via restriction.
    Moreover, for $i<j$, where $\fg_i\supset\fg_j$ and $\fz_i\subset\fz_j$, 
    the image of $\fz_i^*\hookrightarrow\fg_i^*$
    lands inside the image of $\fz_j^*\hookrightarrow\fg_j^*$
    under the restriction map $\fg_i^*\twoheadrightarrow\fg_j^*$.
    This induces an embedding $\fz_i^*\hookrightarrow\fz_j^*$
    because the composition with restriction
    $\fz_i^*\hookrightarrow\fg_i^*\twoheadrightarrow\fg_j^*
    \xrightarrow{\mathrm{res}_{\fz_i}}\fz_i^*$ is identity.
    Thus we have decomposition $\fz_i^*\simeq\fz_{i-1}^*\oplus\fa_i^*$,
    which induces natural isomorphism
    \begin{equation}\label{eq:La_ij vs fa_i*}
    	\La_{(j)}\otimes_\bC E\simeq\fa_j(E)^*,\qquad
    	\La_{(j)}\cap\ch_{-mr_{(j)}}\otimes_\bC E\simeq\fa_{j,-r_{(j)}}^*/\fa_{j,-r_{(j)}+}^*.
    \end{equation}
    
    Let $\{X_{(j)}\in\fa_{(j)}\cap\ch_{-mr_{(j)}}\}$ be the images of
    refined leading terms 
    $\{\phi_{r_{(j)}}\in\fa_{j,-r_{(j)}}^*/\fa_{j,-r_{(j)}+}^*\}$
    of $\phi$ under \eqref{eq:La_ij vs fa_i*}.

    \subsection{The conjectural correspondence}
    
    \subsubsection{A normal subgroup}
    Consider $\tau_i=\fg(F)\cap(\bigoplus_{0\neq\alpha\in\Phi(G_i,S)}\fg(E)_\alpha)$,
    $0\leq i\leq p-1$.
    Define
    \begin{equation}\label{eq:j'}
    	\fj'=\fg_{x,r_{(1)}+}+\sum_{i=0}^{p-1}((\tau_i)_{r_{(i+1)}/2}\cap(\fg_i)_{x,r_{(i+1)},r_{(i+1)}/2,\fm_i})+\sum_{i=2}^p(\fa_i)_{r_{(i)}+}.
    \end{equation}
    Let $\fj^+$ be the ideal of $\fj$ generated by subspace $\fj'$.
    Denote the preimage of 
    $\fj^+/\fg_{x,r_{(1)}}\subset\fj/\fg_{x,r_{(1)}}\simeq J/G_{x,r_{(1)}}$
    in $J$ by $J^+$.
    
    \begin{lem}
    	The linear form $\tphi$ is trivial on $\fj^+$.
    \end{lem}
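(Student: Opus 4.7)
My approach has two steps. Step (a): verify $\tphi$ vanishes on the generating subspace $\fj'$ directly, by analyzing its three summands separately. Step (b): use that $\tphi$ is a smooth character of the pro-unipotent group $J$, so that $\tphi(aba^{-1}b^{-1})=1$ exponentiates to $\tphi([X,Y])=0$ on $\fj$, forcing $\tphi|_{[\fj,\fj]}=0$. Since
\[
\fj^+ \;=\; \fj' + [\fj,\fj'] + [\fj,[\fj,\fj']] + \cdots \;\subset\; \fj' + [\fj,\fj],
\]
the two steps together yield the claim.

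The content is concentrated in step (a), and I would handle the three summands using the explicit description of the extensions of each $\phi_\ell$ from \S\ref{sss:Yu construction}. On $\fg_{x,r_{(1)}+}$, every $\phi_\ell$ has depth $r_{(\ell)}\le r_{(1)}$; whether extended by restriction from $G_\ell\supset G_i$ (case $\ell\le i$) or by root-subspace projection onto $\fg_\ell\subset\fg_i$ (case $\ell>i$), the resulting linear form vanishes on any element of depth $>r_{(1)}\ge r_{(\ell)}$. On the middle summand $(\tau_i)_{r_{(i+1)}/2}\cap(\fg_i)_{x,r_{(i+1)},r_{(i+1)}/2,\fm_i}$, the key observation is that each $\phi_\ell$ factors through the center $\fz_\ell$ and hence annihilates every root subspace $\fg_\alpha$, $\alpha\in\Phi(G_i,S)$: for $\ell\le i$ we have $\tau_i\subset\fg_\ell$ and $\td\phi_\ell\in\fz_\ell^*$ kills root subspaces of $\fg_\ell$, while for $\ell>i$ the projection onto $\fg_\ell$ sends $\fg_\alpha$ either to $0$ (if $\alpha\notin\Phi(G_\ell,S)$) or into a root subspace of $\fg_\ell$ once again killed by $\td\phi_\ell$. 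Note that the Lagrangian condition plays no role at this stage; it enters only in step (b) through the character property. Finally, on $(\fa_i)_{r_{(i)}+}$ with $2\le i\le p$, the decomposition $\fz_i=\fz_{i-1}\oplus\fa_i$ together with $\fz_\ell\subset\fz_{i-1}$ for $\ell<i$ gives $\td\phi_\ell|_{\fa_i}=0$ (indeed $\fa_i\cap\fz_\ell=0$), while for $\ell\ge i$ the depth $r_{(\ell)}\le r_{(i)}$ of $\phi_\ell$ forces vanishing on all elements of depth $>r_{(i)}$.

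The main technical point to double-check is the non-central claim in the middle summand, namely that the intersection with the Lagrangian preimage consists entirely of sums of root-subspace elements (inner roots appearing at depth $\ge r_{(i+1)}$ and outer roots at depth $\ge r_{(i+1)}/2$), and that the two conventions for extending $\phi_\ell$ (restriction versus projection) agree across the overlapping factors $(G_j)_{x,r_{(j+1)},r_{(j+1)}/2,\fm_j}$ so that $\tphi$ is unambiguously defined. Once step (a) is verified in this way, step (b) is immediate from the character property of $\tphi$, and the proof is complete.
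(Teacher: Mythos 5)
Your proposal is correct and follows essentially the same route as the paper: the paper's proof likewise observes that evaluating $\tphi$ on $\fj'$ factors through the projection to $\fs_0$ killing the root-subspace parts $\tau_i$, that $\phi$ has depth $r_{(i)}$ on $\fa_i$ so it vanishes on $(\fa_i)_{r_{(i)}+}$, and then invokes that $\tphi$ is the differential of a character, hence a Lie algebra homomorphism vanishing on the ideal generated by $\fj'$. Your write-up is simply a more detailed unpacking of the same two steps.
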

    \begin{proof}
    	The linear form $\tphi$ evaluated on $\fj'$ is by
    	first projecting to $\fs_0$ by quotienting out root subspaces,
    	then applying $\phi$.
    	The first step annihilates $\tau_i$.
    	From the definition of $r_{(i)}$,
    	the depth of $\phi$ on $\fa_i$ is $r_{(i)}$.
    	Since $\tphi$ is the differential of a character, 
    	thus a Lie algebra homomorphism,
    	it is then trivial on the ideal generated by $\fj'$.
    \end{proof}
    
    \begin{rem}
    	We expect that one can choose Lagrangians $\fm_i\subset V_i$
    	appropriately so that $\fj'$ is itself an ideal of $\fj$.
    	See \eqref{eq:Lagrangian}
    	and Lemma \ref{l:Lagrangian} 
    	for the case of $p=1$.
    \end{rem}

    \subsubsection{}\label{sss:central support}
    Denote 
    \begin{equation}\label{eq:bj}
    	\bj:=\fj/\fj^+.
    \end{equation}
    
    Recall $\fj^+\subset\fj\subset\fg[\![t]\!]$,
    so that $\fj^++\bC\bone\subset\fj+\bC\bone$ are subalgebras of $\hg$.
    Consider
    \begin{equation}\label{eq:Vac_j+}
    	\Vac_{\fj^+}
    	:=\Ind^{\hg}_{\fj^++\bC\bone}\bC
    	=\Ug/\Ug(\fj^++\bC\bone)
    	=\Ind^{\hg}_{\fj+\bC\bone} U(\bj)=\Ug\otimes_{U(\fj+\bC\bone)}U(\bj).
    \end{equation}
    
    We have
    \[
    \End\Vac_{\fj^+}
    \simeq\Hom_{\fj}(U(\bj),\Ug\otimes_{U(\fj+\bC\bone)}U(\bj))
    \simeq\Vac_{\fj^+}^{J^+}.
    \]
    
    Note that $\bj\simeq\fs_0/\fs_0\cap\fj^+$ is abelian.
    We have $U(\bj)\hookrightarrow\End\Vac_{\fj^+}$.
    Denote $\fZ_{\fj^+}=\mathrm{Im}(\fZ\rightarrow\End\Vac_{\fj^+})$.
    Then $\Op_{\fj^+}:=\Spec\fZ_{\fj^+}$ defines a subspace of opers on $D^\times$.

    Define
    \begin{equation}\label{eq:A intersection}
    	A:=U(\bj)\cap\fZ_{\fj^+}\subset\End(\Vac_{\fj^+}),
    \end{equation}
    which fits into commutative diagram
    \begin{equation}\label{eq:c local quant diagram}
    	\begin{tikzcd}
    		A \arrow[r,hook] \arrow[d,hook] &\fZ_{\fj^+} \arrow[d,hook]\\
    		U(\overline{\fj}) \arrow[r,hook] &\End(\Vac_{\fj^+}) \arrow[r,hook] &\Vac_{\fj^+}
    	\end{tikzcd}
    \end{equation}
    
    \begin{conj}\label{c:local conj}
    	\begin{itemize}
    		\item [(i)]
    		Given a sequence of twisted Levi subgroups
    		$G_1,...,G_p$ and a collection of refined leading terms
    		$\phi_{(i)}:\fa_i\rightarrow\bC$, $1\leq i\leq p$
    		as in \S\ref{s:sc refined leading term}.
    		Composed with projection $\bj\rightarrow\fa_i$,
    		they define $\ker\phi_{(i)}\subset U(\bj)$.
    		Denote $A'=A\cap_i\ker\phi_{(i)}\subset A$.
    		Let $X_{(i)}\in\check{\fa}_i$ be the dual refined leading term.
    		Let $\Loc(X_{(i)},R,\theta)$ be the space of 
    		connections with such refined leading terms.
    		Then 
    		\begin{equation}
    			\Spec\fZ_{\fj^+}/A'\simeq p^{-1}(\Loc(X_{(i)},R,\theta))
    		\end{equation}
    		as subspaces of $\Op_{\fj^+}$.
    		In particular, 
    		for any connection underlying an oper in $\Spec\fZ_{\fj^+}/A'$,
    		the associated sequence of Levi subgroups $\LM_1,...,\LM_p$
    		are dual to $G_1,...,G_p$.
    		
    		\item [(ii)]
    		For any character $\tphi$ of $J$ lifted from $(S,\phi)$
    		as in \S\ref{s:sc refined leading term}, 
    		let $\fZ_{\fj,\tphi}=\fZ_{\fj^+}/\ker(\tphi|_A)$,
    		i.e. the central support of $\Vac_{\fj,\tphi}:=\Vac_{\fj^+}/\ker\tphi$.
    		Then $\Spec(\fZ_{\fj,\tphi})=p^{-1}(\nabla)$
    		for a unique formal connection $\nabla$
    		whose associated slopes are $R=\{r_{(1)},...,r_{(p)}\}$,
    		associated Levi subgroups $\LM_{(1)},...,\LM_{{(p)}}$
    		are dual to $G_1,...,G_p$,
    		and the refined leading terms $X_{(1)},...,X_{(p)}$
    		are dual to refined leading terms $\phi_{(1)},...,\phi_{(p)}$
    		of $\phi$.
    		
    		\item [(iii)]
    		For all the $J$ and refined leading terms defined from pair $(S,\phi)$ as in \S\ref{s:sc refined leading term},
    		the union of the corresponding local systems $\Loc(X_{(i)},R,\theta)$ 
    		are all the irreducible formal connections
    		such that the centralizer of the 
    		image of wild inertia group $\rho_\nabla(I_{\diff}^{0+})$ 
    		is a maximal torus.
    		
    		\item [(iv)]
    		If two such pairs $(S,\phi)$ and $(S',\phi')$
    		are $G(F)$-conjugated,
    		then the induced modules $\Vac_{\fj,\tphi}$, $\Vac_{\fj',\tphi'}$
    		have the same central supports.
    	\end{itemize}
        \end{conj}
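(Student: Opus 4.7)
The plan is to prove Conjecture \ref{c:local conj} by induction on the length $p$ of the Howe factorization, using the toral case ($p=1$) established in Section \ref{s:isoclinic local Langlands} as the base of the induction. The strategy rests on three ingredients: (a) the Feigin-Frenkel isomorphism \eqref{eq:FF isom}, which translates the central support computation into a statement about opers; (b) the oper canonical form analysis of Proposition \ref{p:supp of oper form}, which controls exactly which coefficients $v_{ij}$ of the oper equation \eqref{eq:oper form} contribute to the irregular part of the canonical form at each depth; and (c) the explicit dictionary between the Howe factorization on the automorphic side and the dual Levi chain $\LM_{(1)} \supsetneq \cdots \supsetneq \LM_{(p)}$ on the spectral side developed in Section \ref{ss:dual refined leading terms}.

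For part (i), I would analyze $\fZ_{\fj^+}$ via the Moy-Prasad filtration on $\fj^+$. The subalgebra $\fj^+ + \bC\bone$ is built from pieces of depths $r_{(1)}, r_{(1)}/2, \ldots, r_{(p)}, r_{(p)}/2$, so $\Vac_{\fj^+}$ inherits a compatible filtration. Dually, Proposition \ref{p:supp of oper form}.(ii) partitions the oper coefficients into depth-labeled groups, where those with $-r_1 \leq -j + (d_i-1)(r_1+1) \leq \sum_{a=1}^{s-1}(h_a-1)(r_{(a)} - r_{(a+1)})$ control the piece $\nabla_{\La_{(s)}}$ of the canonical form. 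The conjecture predicts that coefficients corresponding to depth greater than $r_{(p)}$ act freely on $\Vac_{\fj^+}$ (these parametrize the fiber of $p$), while the coefficients at depth $r_{(s)}$ land inside $A \subset U(\bj)$ with image determined by the dual refined leading term $X_{(s)}$. Matching the two filtrations via Feigin-Frenkel should yield $\Spec \fZ_{\fj^+}/A' \simeq p^{-1}(\Loc(X_{(i)}, R, \theta))$. Part (ii) then follows by quotienting further by $\ker\tphi$.

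For part (iii), I would appeal to the structure of the canonical form: the hypothesis that the centralizer of $\rho_\nabla(I_{\diff}^{0+})$ is a maximal torus $\check{H}$ forces the Levi sequence $\LM_{(i)}$ to terminate in $\check{H}$, which via the duality $\check{H} \simeq \check{S}$ matches exactly the condition \eqref{eq:Phi_0+=empty} on the automorphic side, and the existence of such a canonical form requires a regular elliptic Weyl element $w_m$ matching the one associated to $S$. Part (iv) should reduce to a dictionary statement: the Hakim-Murnaghan $G(F)$-conjugacy conditions on Yu data translate, through the matching of Section \ref{ss:dual refined leading terms}, to $\LG(\bC)$-equivalence of the irregular part, which by Lemma \ref{l:distinct canonical form} forces equality of the central supports.

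The main obstacle will be step (i): showing that the action of $\fZ$ on $\Vac_{\fj^+}$ is controlled \emph{precisely} by the dual refined leading terms, with no extraneous coefficients. In the toral case $p=1$ treated in Section \ref{s:isoclinic local Langlands}, the matching between $A$ and the leading oper coefficients is tractable because $\bj$ is a quotient of a single Cartan subalgebra and only one slope is in play. In the nested case $p > 1$, the intermediate Moy-Prasad pieces of depths $r_{(i)}/2$ interact with multiple layers of the Levi chain, and the Segal-Sugawara generators of $\fZ$ couple oper coefficients across these depths in a way that is not visible on any single level. A careful inductive argument along the dual chain $\LM_{(1)} \supset \cdots \supset \LM_{(p)}$, using something like parabolic restriction of opers combined with the transversality properties of oper canonical forms from Proposition \ref{p:supp of oper form}, will likely be required, and I expect this coupling across depths to force a genuinely new input beyond the toral case.
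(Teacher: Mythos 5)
This statement is a conjecture, and the paper does not prove it in general: it only establishes parts (i)--(iv) in the toral case $p=1$ (Lemma \ref{l:central support of toral sc repn}, Theorem \ref{t:main}, Remark \ref{r:local conj for toral}), where the whole chain $\LM_{(1)}\supset\cdots\supset\LM_{(p)}$ collapses to a single maximal torus. Your proposal is a strategy outline for the general case, not a proof, and you say so yourself in the final paragraph. The decisive step you defer --- showing that the Segal--Sugawara generators at depth $r_{(s)}$ land inside $U(\bj)$ with image computing exactly the dual refined leading term $X_{(s)}$ --- is precisely the content of Proposition \ref{p:local quant diagram} and Corollary \ref{c:A intersection} in the toral case, and there it rests on a delicate tensor-by-tensor analysis (the inequalities forcing $k=d_i$ and $p\leq 1$, the vanishing of the Killing form between $\ft_Y$ and $\tau$, the commutation argument with $\ft_{Y,d}u^d$) that has no evident analogue once the intermediate Moy--Prasad pieces at depths $r_{(i)}/2$ for several $i$ are present. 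Moreover, Remark \ref{r:explicit local Langlands}.(ii) of the paper warns that for $p>1$ the algebra $A$ cannot be freely generated by the relevant $S_{i,j}$ (by the dimension count of Lemma \ref{l:dim match conn-oper}, freeness would force $\LM_{(1)}$ to be a maximal torus), so your picture of depth-labeled coefficient groups acting independently is expected to fail in the form you state it; identifying the actual relations in $A$ is the open problem, not a routine filtration match.

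Two smaller points. For part (iv) you invoke Lemma \ref{l:distinct canonical form}, but that lemma only says that canonical forms with identical refined leading terms have identical irregular parts; it does not address why $G(F)$-conjugate pairs $(S,\phi)$, $(S',\phi')$ produce equal central supports of the induced $\hg$-modules. The paper's toral-case argument for this (Lemma \ref{l:central support of toral sc repn}.(ii)) works on the quantum side: the two characters differ by an element of $N_{G_0}(\ft_{Y,-N})$ whose adjoint action fixes $\fZ$, hence $\fZ\cap\Ug\ker\tphi_1=\fZ\cap\Ug\ker\tphi_2$. For part (iii), your reduction to the condition \eqref{eq:Phi_0+=empty} and the existence of an elliptic $w_m$ is the right dictionary, but it is a restatement of the expected matching rather than an argument that every such connection is hit.
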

        
        We will confirm the above conjecture for toral supercuspidal representations
        and irreducible isoclinic connections,
        see Remark \ref{r:local conj for toral}.
    
        \begin{cor}\label{c:K-type and L-parameter}
                Let $\nabla$ be a formal connection,
                $\chi$ an oper form of $\nabla$.
                Then $\hg-\mathrm{mod}_\chi$ contains 
                a nonzero $(J,\tphi)$-object 
                for some $(J,\tphi)$ as in \S\ref{s:sc refined leading term}
                if and only if 
                $\nabla$ is an irreducible formal connection 
                such that the centralizer of the 
                image of wild inertia group is a maximal torus.
        \end{cor}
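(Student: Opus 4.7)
The plan is to deduce Corollary \ref{c:K-type and L-parameter} directly from parts (ii) and (iii) of Conjecture \ref{c:local conj}. The key reduction is to reinterpret the existence of a nonzero $(J,\tphi)$-equivariant object in $\hg-\mathrm{mod}_\chi$ as the set-theoretic condition $\chi\in\Spec\fZ_{\fj,\tphi}$ inside $\Op_{\Lg}(D^\times)$.

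First I would verify this reinterpretation. Given a nonzero $(J,\tphi)$-equivariant object $M\in\hg-\mathrm{mod}_\chi$, a nonzero $(J,\tphi)$-equivariant vector in $M$ produces via Frobenius reciprocity a nonzero $\hg$-map $\Vac_{\fj,\tphi}\rightarrow M$, which must factor through $\Vac_{\fj,\tphi}/\ker\chi$ since $\fZ$ acts on $M$ via $\chi$; thus this quotient is nonzero and $\chi$ descends to a character of $\fZ_{\fj,\tphi}$. Conversely, whenever $\Vac_{\fj,\tphi}/\ker\chi$ is nonzero it furnishes a $(J,\tphi)$-equivariant object of $\hg-\mathrm{mod}_\chi$ on its own. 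Hence the condition is equivalent to $\chi$ defining a point of $\Spec\fZ_{\fj,\tphi}\hookrightarrow\Op_{\Lg}(D^\times)$.

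With this translation in hand, both directions become immediate applications of the conjecture. For the ``only if'' direction, Conjecture \ref{c:local conj}.(ii) identifies $\Spec\fZ_{\fj,\tphi}$ with $p^{-1}(\nabla_{\tphi})$ for a unique irreducible formal connection $\nabla_{\tphi}$, and part (iii) guarantees that every such $\nabla_{\tphi}$ has wild inertia image with maximal torus centralizer; so $\nabla=p(\chi)=\nabla_{\tphi}$ has the required property. For the ``if'' direction, given irreducible $\nabla$ whose wild inertia image has maximal torus centralizer, Conjecture \ref{c:local conj}.(iii) produces a tame elliptic pair $(S,\phi)$ with associated $(J,\tphi)$ such that $\nabla\simeq\nabla_{\tphi}$, and then part (ii) places $\chi$ in $p^{-1}(\nabla)=\Spec\fZ_{\fj,\tphi}$, which by the translation above yields the desired object.

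The only substantive obstacle lies outside the corollary itself: one must actually establish Conjecture \ref{c:local conj}, and in particular its surjectivity clause (iii) that every such irreducible $\nabla$ arises as some $\nabla_{\tphi}$. For the toral subcase treated in \S\ref{s:isoclinic local Langlands} the conjecture is confirmed (Remark \ref{r:local conj for toral}), so the corollary is unconditional in that range, namely for irreducible isoclinic $\nabla$; the general case will follow once the refined conjecture is proved for all regular supercuspidal pairs.
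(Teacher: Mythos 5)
Your overall strategy --- translating the existence of a nonzero $(J,\tphi)$-object into a statement about the central support of $\Vac_{\fj,\tphi}$ and then quoting parts (ii) and (iii) of Conjecture \ref{c:local conj} --- is the same as the paper's, and your ``only if'' direction (Frobenius reciprocity giving a nonzero map $\Vac_{\fj,\tphi}\to M$ that factors through $\Vac_{\fj,\tphi}/\ker\chi$, hence $\chi\in\Spec\fZ_{\fj,\tphi}$) matches the paper. But your claimed equivalence has a gap in exactly the direction that the ``if'' part needs: you show that a nonzero object forces $\Vac_{\fj,\tphi}/\ker\chi\neq0$, and that $\Vac_{\fj,\tphi}/\ker\chi\neq0$ yields a nonzero object, but you never prove that $\chi\in\Spec\fZ_{\fj,\tphi}$ implies $\Vac_{\fj,\tphi}/\ker\chi\neq0$. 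That implication is not formal: $\Vac_{\fj,\tphi}$ is not finitely generated over $\fZ_{\fj,\tphi}$, so no Nakayama-type argument applies, and for infinitely generated modules a maximal ideal can lie in the spectrum of the image of the ring in $\End(M)$ while $M/\mathfrak{m}M=0$ (e.g.\ $\bQ$ over $\bZ$ at any prime).

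This missing implication is where the paper does its actual work. It passes to the associated graded: $\gr\Vac_{\fj,\tphi}=\Fun((\ker\tphi)^\perp)$ and $\gr\fZ_{\fj,\tphi}=\Fun\Hit_{\tphi}$, where $\Hit_{\tphi}$ is the closure of the image of the local Hitchin map $h^{cl}$ restricted to the dual lattice $(\ker\tphi)^\perp$. Since $\gr(\ker\chi)$ is the maximal ideal at the unique $\bGm$-fixed point $0\in\Hit_{\tphi}$, one gets $\gr(\Vac_{\fj,\tphi}/\ker\chi)\simeq\Fun H_0$ with $H_0$ the fiber of $h^{cl}$ over $0$, which contains $0$ and is therefore nonempty; hence the quotient is nonzero. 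Without this step (or some substitute for it), your ``if'' direction only shows that $\chi$ lies in the central support of $\Vac_{\fj,\tphi}$, not that $\hg-\mathrm{mod}_\chi$ actually contains a nonzero $(J,\tphi)$-object.
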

    	\begin{proof}
    		If part: 
    		by (ii) and (iii) of Conjecture \ref{c:local conj},
    		there exists $(J,\tphi)$ such that 
    		any oper structure of $\nabla$
    		is in the central support of $\Vac_{\fj,\tphi}$.
    		We regard $\chi$ as a character of $\fZ_{\fj,\tphi}$.
    		It suffices to show that 
    		the $(J,\tphi)$-equivariant object 
    		$\Vac_{\fj,\tphi}/\ker\chi$
    		is nonzero.
    		We show that its associated graded module is nonzero.
    		
    		Denote by $(\ker\tphi)^\perp\subset\fg^*\otimes\omega_F$ 
    		the dual lattice of $\ker\tphi$ 
    		and consider the local Hitchin map
    		\[
    		h^{cl}:(\ker\tphi)^\perp\rightarrow\Hit(D^\times)
    		\]
    		as in \cite[\S4.2, \S4.3]{Zhu},
    		see \S\ref{ss:local Hit} for a review.
    		Denote the Zariski closure of $h^{cl}((\ker\tphi)^\perp)$
    		by $\Hit_{\tphi}$.
    		Then $\gr\fZ_{\fj,\tphi}=\Fun\Hit_{\tphi}$
    		and 
    		$\gr\Vac_{\fj,\tphi}=\gr(\Ind^{\hg}_{\ker\tphi+\bC\bone}\bC)
    		=\Fun((\ker\tphi)^\perp)$.
    		Note that $0\in(\ker\tphi)^\perp$, so that
    		$\Hit_{\tphi}$ contains $0$, the unique $\bGm$-fixed point
    		of $\Hit(D^\times)$.
    		Let $H_0$ be the fiber over $0$ of restriction of local Hitchin map
    		$h^{cl}:(\ker\tphi)^\perp\rightarrow\Hit_{\tphi}$.
    		Then $H_0$, containing $0$, 
    		is a nonempty closed subscheme of $(\ker\tphi)^\perp$.
    		
            Observe that $\fZ_{\fj,\tphi}$ can be identified with 
            the image of $\fZ$ into $\Vac_{\fj,\tphi}$ 
            via action on $1\in\Vac_{\fj,\tphi}$,
            in particular is a subspace of $\Vac_{\fj,\tphi}$.
            Also, $\gr(\ker\chi)$ is the maximal ideal of $\Fun\Hit_{\tphi}$ at $0$.
            We obtain
    		\[
    		\gr(\Vac_{\fj,\tphi}/\ker\chi)
    		\simeq\gr(\Vac_{\fj,\tphi})/\gr(\ker\chi)
    		\simeq\Fun H_0,
    		\]
    		which is nonzero. 
    		This implies that $\Vac_{\fj,\tphi}/\ker\chi$ is nonzero.
    		
    		Only if part: note that $\Vac_{\fj,\tphi}\simeq\Ind^{\hg}_{\ker\tphi+\bC\bone}\bC$.
    		By the same argument as the proof of \cite[Lemma 10.3.2]{FrenkelBook},
    		there exists a nonzero object of $\hg-\mathrm{mod}_\chi$
    		with a nonzero morphism from $\Ind^{\hg}_{\ker\tphi+\bC\bone}\bC$.
    		Thus $\chi$ is in the central support of $\Vac_{\fj,\tphi}$,
    		so that its underlying connection 
    		is as in Conjecture \ref{c:local conj}.(iii).
    	\end{proof}
        
        \begin{rem}\label{r:explicit local Langlands}
        	\begin{itemize}
        		\item [(i)]
        		Part (iv) of Conjecture \ref{c:local conj}
        		is inspired by \cite[Corollary 3.7.10]{KalethaRegular}.
        		
        		\item [(ii)]
        		We expect that the algebra $A$ is contained in 
        		the algebra generated by images of 
        		Segal-Sugawara operators $S_{i,j}\in\fZ$(see \S\ref{ss:SS operators})
        		satisfying the index condition
        		$-r_1\leq -j+(d_i-1)(r_1+1)\leq\sum_{a=1}^{p-1}(h_a-1)(r_{(a)}-r_{(a+1)})
        		+(h_p-1)r_{(p)}$
        		as in Proposition \ref{p:supp of oper form}.
        		In fact, the proof of Proposition \ref{p:local quant diagram}
        		for the toral supercuspidal case
        		shows that the images of $S_{i,j}$
        		for $-r_1\leq -j+(d_i-1)(r_1+1)<0$ are contained in $A$.
        		However, 
        		unlike the toral supercuspidal case we will see 
        		in \S\ref{s:isoclinic local Langlands},
        		in general $A$ is not expected to be freely generated by
        		$S_{i,j}$: Lemma \ref{l:dim match conn-oper} indicates that
        		if images of $S_{i,j}$ for $-r_1\leq -j+(d_i-1)(r_1+1)<0$ in $A$
        		are algebraically free,
        		then $\LM_{(1)}$ should be a maximal torus.
        		
        		\item [(iii)]
        		If $A$ is generated by the images of those $S_{i,j}$ in (ii),
        		then the underlying connection of any oper
        		in the central support of $\Vac_{\fj,\tphi}$
        		can be given by an oper canonical form
        		whose coefficient $v_{i,j}$ for those $(i,j)$ in (ii)
        		is given by $\tphi(S_{i,j})$,
        		and all other $v_{i,j}$ are zero.
        		This provides an explicit recipe for the Langlands parameters.
        		We will see in \S\ref{s:isoclinic local Langlands} that 
        		this is the case for toral supercuspidal representations
        	\end{itemize}
        \end{rem}

	\section{Isoclinic formal connections and opers}\label{s:isoclinic oper}\label{s:isoclinic and oper}
	We study the relationship between oper canonical forms 
	and connection canonical forms 
	for isoclinic connections.
	
	\subsection{Isoclinic formal connections}\label{ss:isoclinic conn}
	
	\subsubsection{}	
	Following \cite{JYDeligneSimpson},
	we call a formal connection $\nabla$ \emph{isoclinic}
	if in its canonical form \eqref{eq:canonical form},
	the leading term $X_{r_1}$ is regular semisimple.
	For such connections, $R=\{r_1\}$, $p=1$,
	the sequence of Levi subgroups $\LM_{i_s}$
	consists of only a single maximal torus centralizing $X_{r_1}$,
	and there are no higher leading terms beyond $X_{r_1}$.
	Clearly this notion is independent of the choice of canonical form
	and the gauge transformation.
	In particular, such $\nabla$ is irregular.
	The isoclinic formal connections are indeed the de Rham analog
	of the \emph{toral supercuspidal parameters}
	\cite[Definition 6.1.1]{KalethaRegular}.

	For an irregular formal connection $\nabla$ with slope $\nu$,
	write $\nu=\frac{N}{m}$ where $(N,m)=1$.
	We collect some results on isoclinic formal connections 
	from \cite{JYDeligneSimpson}.
	Recall that $\crho:\bGm\rightarrow\cT$.
	Let $\zeta_m$ be a primitive $m$-th root of unity that is fixed once and for all.
	Let $\theta=\Ad_{\crho(\zeta_m)}$ be an inner automorphism of $\Lg$
	that defines a grading
	\begin{equation}\label{eq:Lg grading}
		\Lg=\bigoplus_{i\in\bZ/m\bZ}\Lg_i.
	\end{equation}

	\begin{lem}\label{l:isoclinic}
		A formal connection $\nabla$ is isoclinic 
		with slope $\nu=\frac{N}{m}>0$, $(N,m)=1$,
		if and only if
		$m$ is a regular number of $W$,
		and it is $G(\overline{F})$-gauge equivalent
		to a form as follows:
		\begin{equation}\label{eq:isoclinic canonical form}
			\td+(X_{-N}t^{-N/m}+\cdots+X_{-1}t^{-1/m}+X_0)\frac{\td t}{t},
		\end{equation}
		where $X:=X_{-N}$ is regular semisimple,
		$X_i\in\fz_{\Lg}(X)=:\ct_X$,
		$X_i\in\Lg_i$.
		Moreover, if $m$ is elliptic, then $\nabla$ is irreducible.
	\end{lem}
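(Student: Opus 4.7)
The plan is to combine the classification of canonical forms in \cite{BV} with Springer's theory of regular Weyl group elements. For the forward direction, I would start with a weakly $\bZ$-reduced canonical form \eqref{eq:canonical form} of $\nabla$. Isoclinicity means the leading term $D_1$ is regular semisimple, so its centralizer $\check H := Z_{\LG}(D_1)$ is a maximal torus with Lie algebra $\ch = \ct_X$ (for $X := D_1$), and the remaining $D_i$, commuting with $D_1$, all lie in $\ch$. By the BV converse in \cite[\S9.8]{BV} there exists $\theta \in \LG$ and $b \geq 1$ with $\theta^b = 1$, $b r_i \in \bZ$, and $\Ad_\theta(D_i) = \exp(-2\pi\sqrt{-1}\, r_i) D_i$. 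The relation $\Ad_\theta(D_1) = \exp(-2\pi\sqrt{-1}\, N/m) D_1$ together with $D_1$ being regular in $\ch$ forces $\theta$ to normalize $\check H$ and to act on $\ch$ by a Weyl element $w$; since $(N,m)=1$, the eigenvalue is a primitive $m$-th root of unity and $D_1$ is a regular eigenvector of $w$, so Springer's theory identifies $m$ as a regular number of $W$ and $w$ as a regular element of order $m$.

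By Springer's uniqueness (up to $W$-conjugacy) of regular elements of a given order, I can absorb a further $\LG$-conjugation into the gauge transformation to bring $\theta$ into the form $\crho(\zeta_m)$, which induces the grading \eqref{eq:Lg grading}. The remaining eigenvalue conditions then read $D_i \in \ch \cap \Lg_{-mr_i}$, and relabelling the coefficients by their graded degree yields \eqref{eq:isoclinic canonical form}. Conversely, given an equation of the form \eqref{eq:isoclinic canonical form} with $X$ regular semisimple and $X_i \in \ct_X \cap \Lg_i$, the pair $\theta = \crho(\zeta_m)$, $b = m$ satisfies the BV converse hypotheses by construction of the grading, so the equation is the canonical form of a bona fide formal $\LG$-connection over $F$, and isoclinicity follows from $X_{-N}$ being regular semisimple.

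For the \emph{moreover} clause I would use the standard characterization that $\nabla$ is reducible precisely when its monodromy factors through a proper parabolic $\check P \subsetneq \LG$. The wild inertia image lies in $\check H$, so such a $\check P$ must contain $\check H$ with Levi $\check L \supseteq \check H$; the tame monodromy being (essentially) generated by $\theta$, the containment $\theta \in \check P$ is equivalent to $w \in W_L$. Ellipticity of $m$ says exactly that $w$ lies in no proper parabolic subgroup of $W$, so no such $\check L \subsetneq \LG$ exists, forcing $\nabla$ to be irreducible. I expect the main obstacle to be the rigidity step in the forward direction: passing from the abstract datum $(D_1, \theta)$ satisfying the eigenvalue relation to the explicit normal form $\theta = \crho(\zeta_m)$. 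This crucially relies on Springer's classification of regular elements and on the freedom to further conjugate the weakly $\bZ$-reduced canonical form by $\LG(\bC)$.
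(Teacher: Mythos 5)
Your strategy is essentially the one the paper follows: the ``if'' direction is exactly the converse statement in \cite[\S 9.8, Proposition]{BV}, and for the ``only if'' direction the paper outsources to \cite[Lemmas 2.2 and 2.10]{JYDeligneSimpson} and \cite[Theorem 3.2.5]{OY} precisely the Springer-theoretic normalization you sketch; the irreducibility argument via proper parabolics containing $\ct_X$ and ellipticity of $w$ is also the paper's (it invokes the proof of \cite[Proposition 12.(v)]{CYTheta}). Two points in your forward direction need repair. First, the claim that $\theta$ can be conjugated \emph{into the element} $\crho(\zeta_m)$ is false in general: BV only gives $\theta^b=1$ with $m\mid b$, so $\theta$ may have order strictly larger than $m$ (it can differ from an order-$m$ lift by an element of the torus $\cT_X$), and even two lifts of the same Weyl element need not be $\LG$-conjugate. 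What you can --- and only need to --- arrange is that, after a constant gauge transformation, the Cartan containing all the $D_i$ is one on which $\Ad_{\crho(\zeta_m)}$ acts by the same regular Weyl element of order $m$ as $\theta$ does; Springer's conjugacy theorem (for the fixed regular \emph{eigenvalue} $\zeta_m^{-N}$, not merely the order) then places each $D_i$ in the eigenspace $\ct_X\cap\Lg_{-mr_i}$, which is all the lemma asserts. Second, this matching presupposes that $\Ad_{\crho(\zeta_m)}$ actually realizes such a regular element on some Cartan, equivalently that $\Lg_{-N}$ contains regular semisimple elements; this is the content of \cite[Theorem 3.2.5]{OY} (the $m$-regularity of $\crho/m$) and is a genuine input, not a formality. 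You should also record that all the lower slopes satisfy $mr_j\in\bZ$ (immediate from $w^m=1$ acting on $\ct_X$), since otherwise the target form \eqref{eq:isoclinic canonical form} does not even make sense. With these corrections your argument is a sound unpacking of the lemmas the paper cites.
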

	\begin{proof}
		The if part follows from \cite[9.8, Proposition]{BV}.
		
		For the only if part, 
		first we know $m$ is a regular number from
		\cite[Lemma 2.2]{JYDeligneSimpson}.
		By \cite[Theorem 3.2.5]{OY}, 
		$x=\crho/m$ is $m$-regular in the sense of 
		\cite[Definition 2.4]{JYDeligneSimpson}.
		By \cite[Lemma 2.10]{JYDeligneSimpson},
		$\nabla$ can be $G(\overline{F})$-gauge transformed to
		a form as \eqref{eq:isoclinic canonical form}
		except $X_0$.
		Since $X$ is regular semisimple,
		we can use gauge transformation to make $X_0\in\ct_X$.
		Let $\cT_X=C_{\LG}(X)$, $W\simeq N_{\LG}(\cT_X)/\cT_X$.
		Since $X$ is $\theta$-eigen,
		$\theta$ acts on $\ct_X$,
		where it acts as a regular element of order $m$ of $W$.
		By the proof of \cite[Lemma 2.2]{JYDeligneSimpson},
		there exists regular $w$ with 
		the same action as $\theta$ on $X_{-N},...,X_{-1}$,
		and $w$ fixes $X_0$.
		Since $X\in\ct_X$ is a regular vector fixed by $w^{-1}\theta$,
		we have $\theta=w$ in $W$.
		Thus $X_0$ is fixed by $\theta$, i.e. $X_0\in\Lg_0$.
		
		Note that the differential Galois group of $\nabla$
		contains the smallest torus whose Lie algebra contains $X$,
		together with a lift $w$ of an order $m$ regular element
		in the Weyl group $W(\LG,\cT_X)$.
		By the same proof as \cite[Proposition 12.(v)]{CYTheta},
		if $w$ is elliptic, $\nabla$ is irreducible. 
	\end{proof}

    \begin{lem}\label{l:isoclinic canonical form}
    	Two irreducible isoclinic formal connections are isomorphic,
    	i.e. differ by a $\LG(F)$-gauge transform,
    	if and only if they have isomorphic canonical forms,
    	i.e. differ by a $\LG(\overline{F})$-gauge transform.
    \end{lem}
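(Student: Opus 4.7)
The ``only if'' direction is immediate because $\LG(F)\subset\LG(\overline{F})$.  For the ``if'' direction, my plan is to combine the uniqueness part of the Babbitt--Varadarajan theorem with an explicit cocharacter-valued gauge transformation.  The idea is that an $\LG(\overline{F})$-gauge equivalence between two isoclinic canonical forms is, up to a constant $\LG$-conjugation, encoded entirely by a shift of the regular part $X_{0}$ lying in the cocharacter lattice of the common Cartan, and such a shift can always be realized by a gauge transformation defined over $F$ itself.

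Concretely, by Lemma~\ref{l:isoclinic} I may assume each $\nabla_{a}$, $a=1,2$, has been written in the canonical form
\[
\nabla_{a}=\td+\bigl(X_{-N}^{(a)}t^{-N/m}+\cdots+X_{-1}^{(a)}t^{-1/m}+X_{0}^{(a)}\bigr)\frac{\td t}{t},
\]
with $X^{(a)}_{-N}$ regular semisimple and each $X^{(a)}_{-j}\in\ct_{X^{(a)}}\cap\Lg_{j}$.  First I would invoke the BV uniqueness theorem \cite[\S9.5]{BV}: since the two canonical forms represent $\LG(\overline{F})$-equivalent connections, their tuples $\bigl(X_{-N}^{(a)},\ldots,X_{-1}^{(a)},\exp(2\pi\sqrt{-1}X_{0}^{(a)})\bigr)$ must be $\LG$-conjugate by some $g\in\LG=\LG(\bC)$.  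Because $\LG(\bC)\subset\LG(F)$, I absorb this constant conjugation into an $F$-gauge transformation of $\nabla_{1}$; after this step the two Cartans coincide (call the common one $\cT_{X}$), the irregular coefficients satisfy $X_{-j}^{(1)}=X_{-j}^{(2)}$ for $j=1,\dots,N$, and $\exp(2\pi\sqrt{-1}X_{0}^{(1)})=\exp(2\pi\sqrt{-1}X_{0}^{(2)})$.  Second, the difference $\mu:=X_{0}^{(1)}-X_{0}^{(2)}\in\ct_{X}$ then lies in the cocharacter lattice $X_{*}(\cT_{X})$.  Viewing $\mu$ as a cocharacter $\bGm\to\cT_{X}$ produces $\mu(t)\in\cT_{X}(F)\subset\LG(F)$; because the coefficients of $\nabla_{1}$ all lie in the abelian $\ct_{X}$, the $\Ad$-action of $\mu(t)$ on them is trivial, so the gauge action amounts to subtracting the logarithmic derivative $\mu\,\frac{\td t}{t}$.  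With the appropriate sign this shifts $X_{0}^{(1)}$ to $X_{0}^{(2)}$ and identifies $\mu(t)\cdot\nabla_{1}$ with $\nabla_{2}$, giving the required $\LG(F)$-gauge equivalence.

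The main subtlety I anticipate is the bookkeeping around the $\bZ/m$-grading $\Lg=\bigoplus\Lg_{i}$ defined by $\theta=\Ad_{\crho(\zeta_{m})}$.  The BV conjugation $g\in\LG$ need not commute with $\theta$, so after absorbing $g$ into $\nabla_{1}$ the new regular coefficient may no longer sit in the graded piece $\Lg_{0}$ even though both original $X_{0}^{(a)}$ do; this is harmless for the cocharacter step, which requires only $\mu\in X_{*}(\cT_{X})$, but it does mean that the intermediate form of $\nabla_{1}$ is no longer literally the canonical form of Lemma~\ref{l:isoclinic}, so the argument must be phrased so that the grading is only invoked to set up the initial form and never beyond the first step.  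A smaller secondary point is a careful verification of the sign conventions in the formula $\mu(t)\cdot\nabla=\nabla-\mu\,\frac{\td t}{t}$, which determines the direction of the final cocharacter gauge.
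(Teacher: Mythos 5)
Your ``only if'' direction is fine, but the ``if'' direction misses the actual content of the lemma, which is a \emph{descent} statement. The two connections $\nabla_1,\nabla_2$ are objects over $F$; their canonical forms are equations over $\overline{F}$ (they involve $t^{-j/m}$), reached via gauge transformations $h_1,h_2\in\LG(\overline{F})$. Your argument produces an element $\mu(t)g$ relating the two \emph{canonical forms}, but the gauge transformation relating the original connections is then $h_2\,\mu(t)\,g\,h_1^{-1}$, which a priori lies only in $\LG(\overline{F})$ --- so you have merely re-derived the hypothesis. What must be shown is that the nonempty $\overline{F}$-transporter between $\nabla_1$ and $\nabla_2$ has an $F$-point, i.e.\ that a given canonical form admits (up to isomorphism over $F$) only one $F$-form. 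This is exactly where irreducibility enters, and your argument never uses it: without ellipticity the statement is false, since the $F$-forms with a fixed isoclinic canonical form are controlled by the $\cT$-twisted-conjugacy classes of lifts of the order-$m$ Weyl element $w$, which form a torsor under $\cT/(1-w)\cT$ and collapse to a point precisely when $w$ is elliptic.

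The paper's proof handles this by passing to monodromy representations $\rho,\rho'\colon I_{\diff}\to\LG$: after conjugating so that they agree on the subgroup $I_{\diff}^a$ corresponding to a finite cover, the remaining freedom is the image of a generator $\gamma$ of the cyclic quotient, which is a lift of the elliptic element $w$; since all lifts of an elliptic element are $\cT$-conjugate, $\rho$ and $\rho'$ are conjugate and the connections are isomorphic over $F$. Incidentally, your cocharacter-shift step is vacuous in the relevant case: by Lemma~\ref{l:isoclinic}, for an irreducible isoclinic connection $X_0\in\ct_{X,0}=0$, so there is no regular part to shift. To repair your approach you would need to compare the descent data (the automorphisms $\theta$ of \cite[\S9.8 Proposition]{BV}) attached to the two $F$-forms and show they are $\cT_X$-conjugate, which amounts to the same ellipticity argument the paper uses.
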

    \begin{proof}
    	Only the if part needs to be proved.
    	Since we assumed $\LG$ simply, 
    	the proof is contained in the proof of \cite[Lemma 2.10]{JYDeligneSimpson}
    	which we recall.
    	
    	Let $\nabla,\nabla'$ be irreducible isoclinic formal connections
    	with isomorphic canonical forms.
    	Let $\rho,\rho':I_{\diff}\rightarrow\LG$
    	be their monodromy representations.
    	Then there exists a finite covering $D^\times_a=\Spec\bC(\!(t^{1/a})\!)$
    	with differential Galois group $I_{\diff}^a\subset I_{\diff}$
    	such that the restrictions $\rho|_{I_{\diff}^a},\rho'|_{I_{\diff}^a}$
    	can be $\LG$-conjugated to the same monodromy representation
    	$\pi:I_{\diff}^a\rightarrow\cT$
    	landing inside a maximal torus $\cT$.
    	
    	We may assume $\rho|_{I_{\diff}^a}=\rho'|_{I_{\diff}^a}=\pi$.
    	Note $I_{\diff}/I_{\diff}^a\simeq\mu_a$ is cyclic with a generator $\gamma$.
    	By \cite[Lemma 2.2.(1)]{JYDeligneSimpson},
    	$\rho(\gamma),\rho'(\gamma)$ normalize $\cT$ and map to the same element
    	$wT$ in the Weyl group,
    	which must be elliptic by Lemma \ref{l:isoclinic}.
    	For elliptic elements, their lifts to $\LG$ are all $\cT$-conjugated.
    	Thus $\rho,\rho'$ are $\cT$-conjugated.
    	This proves the statement.
    \end{proof}
    
    \subsubsection{}
    Denote by $\Loc(X,\nu)$ the isomorphism classes of 
    irreducible isoclinic formal connections with slope $\nu$ 
    and leading term $X$ in its canonical form.
    Any $\nabla\in\Loc(X,\nu)$ has a canonical form as in Lemma \ref{l:isoclinic}.
    Observe that its differential Galois group is contained in
    $\cT_X$ and a lift of order $m$ regular Weyl group element $w_m$.
    Thus for it to be irreducible, 
    $w_m$ must have no nonzero fixed point on $\ct_X$,
    i.e. $w_m$ is \emph{elliptic}.
    Equivalently, $m$ is a regular elliptic number of $W$,
    and the grading define by $\crho/m$
    are exactly the inner stable gradings,
    see \cite{RLYG} where such $m$ and stable gradings are classified.
    
    \begin{lem}\label{l:Loc(X,nu)}
    	The isomorphism classes of irreducible isoclinic connections
    	$\Loc(X,\nu)$ with fixed leading term $X$ and slope $\nu$
    	are one-to-one corresponding to following equations:
    \end{lem}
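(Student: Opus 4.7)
The plan is: use Lemma~\ref{l:isoclinic} to place any $\nabla\in\Loc(X,\nu)$ in the canonical form of that lemma, invoke Lemma~\ref{l:isoclinic canonical form} to reduce classification to $\LG(\overline F)$-gauge equivalence of such forms with leading term exactly $X$, and then verify that in the irreducible isoclinic case there is essentially no residual gauge freedom in $\cT_X(\overline F)$.

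First, by Lemma~\ref{l:isoclinic}, any $\nabla\in\Loc(X,\nu)$ is $\LG(\overline F)$-gauge equivalent to some
\[
\td+(X_{-N}t^{-N/m}+\cdots+X_{-1}t^{-1/m}+X_0)\tfrac{\td t}{t},\quad X_i\in\ct_{X_{-N}}\cap\Lg_i,
\]
with $X_{-N}$ regular semisimple. Conjugating by an element of $\LG$ sending $X_{-N}$ to $X$, I may assume $X_{-N}=X$ and $\ct_{X_{-N}}=\ct_X$. Since $m$ is a regular elliptic number of $W$, $\theta$ acts on $\ct_X$ without nonzero fixed vectors, giving $\ct_X\cap\Lg_0=0$ and hence $X_0=0$. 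By Lemma~\ref{l:isoclinic canonical form}, two such forms give isomorphic connections iff they differ by a $\LG(\overline F)$-gauge transform.

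Next, $X$ is regular semisimple with trivial Weyl stabilizer, so gauge transforms preserving the leading term lie in $\cT_X(\overline F)$. Any such $g$ factors as $g=t^{Y/m}\cdot c\cdot\exp(V)$ with $Y\in X_*(\cT_X)\otimes\bQ$, $c\in\cT_X(\bC)$, and $V=\sum_{k\geq 1}V_kt^{k/m}\in\ct_X\otimes\cO_{\overline F}^{+}$. Since $\cT_X$ is abelian, $\Ad_g$ is trivial on $\ct_X$ and
\[
g^{-1}\td g=\tfrac{Y}{m}\tfrac{\td t}{t}+\sum_{k\geq 1}\tfrac{k}{m}V_kt^{k/m}\tfrac{\td t}{t},
\]
which has only non-negative order in $t^{1/m}$. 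Gauging thus shifts the order-zero coefficient $X_0$ by $-Y/m$ and introduces positive-order terms but leaves $X_{-N+1},\ldots,X_{-1}$ invariant. Preserving $X_0=0$ and the absence of positive-order terms forces $Y=0$ and $V=0$, so $g=c$ is constant and acts trivially.

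The main obstacle is the converse direction: verifying that every admissible tuple $(X_{-N+1},\ldots,X_{-1})$, with $X_j\in\ct_X\cap\Lg_j$, does arise from a genuine irreducible isoclinic connection in $\Loc(X,\nu)$. This amounts to checking descent of such an equation to a formal connection over $F$, supplied by \cite[\S9.8 Proposition]{BV} applied to the order-$m$ element $\theta=\Ad_{\crho(\zeta_m)}$ conjugate to the regular elliptic $w_m$, followed by irreducibility from Lemma~\ref{l:isoclinic}. Combining the two directions yields the claimed bijection.
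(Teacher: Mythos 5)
Your argument is correct and follows essentially the same route as the paper's proof: both reduce via Lemma \ref{l:isoclinic} and Lemma \ref{l:isoclinic canonical form} to uniqueness of the canonical form with fixed leading term, use ellipticity of $w_m$ to force $X_0=0$, and observe that the residual gauge freedom acts trivially on $\ct_X$-valued coefficients. The one step to anchor more firmly is your claim that a gauge transform matching two such forms with leading term $X$ lies in $\cT_X(\overline{F})$ — this does not follow merely from $X$ being regular semisimple, but is supplied by the uniqueness statement of \cite[\S9.5]{BV} quoted in \S2.1 (the irregular coefficients are unique up to constant conjugation, and a constant conjugator fixing $X$ lies in $\cT_X$), which is exactly what the paper invokes at the same point; your explicit parametrization of $\cT_X(\overline{F})$ and the surjectivity check via \cite[\S9.8 Proposition]{BV} are fine.
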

    \begin{equation}\label{eq:Loc(X,nu)}
    	\{\td+(X_{-N}t^{-N/m}+\cdots+X_{-1}t^{-1/m})\frac{\td t}{t}\mid
    	X_{-N}=X, X_i\in\ct_X\cap\Lg_i=:\ct_{X,i}\}.
    \end{equation}
    \begin{proof}
    	By Lemma \ref{l:isoclinic canonical form},
    	isomorphism classes of irreducible isoclinic connections
    	are in bijection with isomorphism classes of their canonical forms.
    	If two canonical forms as in \eqref{eq:isoclinic canonical form}
    	with $X_{-N}=X$ are gauge equivalent,
    	their irregular parts must differ by a gauge transform 
    	by some element $g\in\cT_X$,
    	which acts as identity on such equations.
    	Also, note that $X_0\in\ct_{X,0}=0$ as $w_m$ is elliptic.
    	Thus such canonical form is unique.
    \end{proof}
    
    \subsubsection{}
    We collect an observation for future use.
    \begin{lem}\label{l:isoclinic can remove regular part}
    	Assume $N\geq1$.
    	An irregular formal connection of the form
    	\[
    	\nabla=\td+\sum_{i=-N}^\infty X_it^i \frac{\td t}{t},\quad
    	X_i\in\Lg, 
    	\]
    	with $X_{-N}$ regular semisimple
    	has the same canonical form as 
    	$\nabla'=\td+\sum_{i=-N}^0 X_it^i\frac{\td t}{t}$.
    \end{lem}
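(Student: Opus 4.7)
The goal is to show that $\nabla$ and $\nabla'$ are $\LG(\cO)_{0+}$-gauge equivalent, where $\LG(\cO)_{0+}$ denotes the kernel of evaluation at $t=0$; by uniqueness of the canonical form, this forces them to share the same canonical form. The plan is to exhibit an explicit $g\in \LG(\cO)_{0+}$ that reduces $\nabla$ to a Cartan-valued connection depending only on the coefficients $X_{-N},\ldots,X_0$, and then to check that the same procedure applied to $\nabla'$ produces the same result.

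The construction has two stages. First, iterating in increasing order over $i\geq -N+1$, I would apply the gauge transform $\exp(Y^{(i)} t^{i+N})$ with $Y^{(i)}\in\bigoplus_\alpha\Lg_\alpha$ (root spaces with respect to the torus $\cT_X:=C_{\LG}(X_{-N})$) chosen so that $[Y^{(i)},X_{-N}]$ cancels the current root-space component of the coefficient at $t^{i-1}\,\td t$. Such $Y^{(i)}$ exists and is unique because $\ad_{X_{-N}}$ is a bijection on $\bigoplus_\alpha\Lg_\alpha$, since $X_{-N}$ is regular semisimple. A direct calculation shows that this transform only affects coefficients at orders $t^{j-1}\,\td t$ with $j\geq i$, preserving all lower-order terms. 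After completing the iteration for every $i\geq -N+1$, the connection takes values in $\ct_X(\!(t)\!)\,\td t$. In the second stage, since $\ct_X$ is abelian, the positive-power part can be killed all at once by the torus gauge transform $\exp\bigl(-\sum_{i\geq 1}\tfrac{1}{i}\tilde X_i t^i\bigr)$, exploiting that only the $-g^{-1}\td g$ term contributes in the abelian setting. The result is a connection of the form $\td+\sum_{i=-N}^0 \tilde X_i t^{i-1}\,\td t$ with $\tilde X_i\in\ct_X$.

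The technical heart is the observation that, throughout the first stage, the modification to any polar-or-constant coefficient (index $j\leq 0$) from the level-$i$ transform is the commutator $[Y^{(i)}, X_{j-i-N}]$, and the constraint $j-i-N\in[-N,-1]$ forces this to depend only on the polar coefficients $X_{-N},\ldots,X_{-1}$. By induction on $i$, the chosen $Y^{(i)}$ at each polar-or-constant level is determined solely by the original data $X_{-N},\ldots,X_0$, which coincides for $\nabla$ and $\nabla'$. Hence the full procedure produces the same final Cartan-valued connection from either input, proving the desired $\LG(\cO)_{0+}$-gauge equivalence. The main obstacle is the careful bookkeeping of orders and Cartan/root projections: one must verify that $\Ad_{\exp(Yt^k)}$ and $\exp(-Yt^k)\,\td\exp(Yt^k)$ for $k\geq N+1$ only introduce corrections at orders $t^{j-1}\,\td t$ with $j\geq k-N$, which is a routine but meticulous computation using the prescribed gauge action $g\cdot\nabla = \td+\Ad_g(A)\td t - g^{-1}\td g$.
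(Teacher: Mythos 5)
Your proposal is correct and follows essentially the same two-stage argument as the paper: first iteratively conjugating by $\exp\bigl(-\alpha(X_{-N})^{-1}\pi_\alpha(\cdot)t^{N+i}\bigr)$-type elements to reach a $\ct_X$-valued form whose coefficients in degrees $\leq 0$ depend only on $X_{-N},\dots,X_0$, then killing the regular part with the abelian gauge transform $\exp\bigl(-\sum_{i\geq1}i^{-1}Y_it^i\bigr)$. Your order-bookkeeping justification for why the polar-and-constant coefficients are unaffected by the tail is a welcome elaboration of what the paper leaves as ``recursive relations,'' but it is the same proof.
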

    \begin{proof}
    	Let $X=X_{-N}$, $\ct_X=\fz_{\Lg}(X)$ be a Cartan subalgebra.
    	Write $\Lg=\ct_X\bigoplus_{\alpha\in\Phi(\Lg,\ct_X)}\Lg_\alpha$.
    	Let $\pi_\alpha$ be the projection from $\Lg$ to $\Lg_\alpha$.
    	
    	The first step in transforming $\nabla$ and $\nabla'$ into canonical form
    	is applying the gauge transforms 
    	to make them into $\td+\ct_X(\!(t)\!)\td t$.
    	Explicitly,
    	we gauge transofrm by convergent products of elements of the form
    	$\exp(-\alpha(X)^{-1}\pi_\alpha(X_i')t^{N+i})$
    	for $\alpha\neq 0$ and $i=-N+1,-N+2,...$,
    	where $X_{-N+1}'=X_{-N+1}$, and $X_i'$ for $i>-N+1$ are determined by
    	recursive relations.
    	We obtain two formal connections of the form
    	$\td+\sum_{i=-N}^\infty Y_it^i \frac{\td t}{t} $
    	where $Y_{-N}=X$, all $Y_i\in\ct_X$,
    	and $Y_{-N},...,Y_0$ are determined by $X_{-N},...,X_0$.
    	The two connections only differ on their regular parts.
    	
    	The second step is getting rid of the regular part.
    	This is done by applying the gauge transform by 
    	$\exp(-\sum_{i\geq 1}t^ii^{-1}Y_i)\in\cT_X(\!(t)\!)$,
    	which does not change $Y_i$ for $i<0$.
    	The two connections arrive at the same canonical form.
    \end{proof}
	
	\begin{rem}
		The above lemma is false for regular formal connections.
	\end{rem}

	\subsection{Oper canonical forms of irreducible isoclinic formal connections}
	Let $m$ be a regular elliptic number of $W$, $(N,m)=1$, $\nu=\frac{N}{m}$. 
	We know from \cite[Lemma 2.5]{JYDeligneSimpson} that
	$\Lg_{-N}$ contains regular semisimple elements.
	Let $X\in\Lg_{-N}^{\rs}$,
	$\ct_X=\fz_{\Lg}(X)$, $\cT_X=C_{\LG}(X)$.
	Since $X$ is $\theta$-eigen, $\theta$ acts on $\ct_X$,
	$\ct_X=\bigoplus_{i\in\bZ/m\bZ}\ct_{X,i}$.
    
    Denote the space of opers whose underlying connection belong to $\Loc(X,\nu)$
    by 
    \begin{equation}\label{eq:Op(X,nu)}
    	\Op_{\Lg}(X,\nu):=p^{-1}(\Loc(X,\nu)).
    \end{equation}
    
    Define
    \begin{equation}\label{eq:ell_ij}
    	\ell_{i,j}:=
    	(d_i-1)N+m(d_i-1-j)
    \end{equation}
    and
    \begin{equation}\label{eq:A_nu tA_nu}
    	A_\nu
    	=\{(i,j)\mid 1\leq i\leq n,\ 
    	-N+1\leq\ell_{i,j}\leq-1\},\quad
    	\tilde{A}_\nu
    	=\{(i,j)\mid 1\leq i\leq n,\ 
    	-N\leq\ell_{i,j}\leq-1\}.
    \end{equation}    

    \begin{lem}\label{l:dim match conn-oper}
    	\begin{itemize}
    		\item [(i)]
    		The set $A_{\nu,\ell}:=\{(i,j)\in A_\nu\mid \ell=\ell_{i,j} \}$
    		has cardinality $\dim\ct_{X,\ell}$
    		for any $\ell$.
    		
    		\item [(ii)]
    		$\Loc(X,\nu)\simeq\bA^{|A_\nu|}$.
    	\end{itemize}
    \end{lem}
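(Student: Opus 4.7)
Both parts ultimately reduce to Springer's theorem on regular elements of complex reflection groups, which computes the eigenvalues of such an element on the Cartan subalgebra in terms of the fundamental degrees of $\LG$. Part (ii) is then a formal consequence of (i) via Lemma \ref{l:Loc(X,nu)}.

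For (i), my first step would be to note that since $X \in \Lg_{-N}$ is regular semisimple and a $\theta$-eigenvector, $\theta$ preserves $\ct_X = \fz_{\Lg}(X)$ and acts there as an element of the Weyl group $W(\LG, \cT_X)$ that admits the regular eigenvector $X$ with eigenvalue $\zeta_m^{-N}$ (so in particular is a regular element in the sense of Springer). Applying Springer's theorem with this normalization, the spectrum of $\theta\big|_{\ct_X}$ is $\{\zeta_m^{-N(1 - d_i)}\}_{i=1}^n = \{\zeta_m^{N(d_i-1)}\}_{i=1}^n$, which gives
\[
\dim \ct_{X,\ell} = \bigl|\{\, i : 1 \leq i \leq n,\ N(d_i - 1) \equiv \ell \pmod{m}\,\}\bigr|
\]
for every integer $\ell$ (the left side depends only on $\ell \bmod m$).

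Second, I would count $|A_{\nu,\ell}|$ directly from the formula $\ell_{i,j} = (d_i - 1)N + m(d_i - 1 - j)$: for a fixed index $i$, the equation $\ell_{i,j} = \ell$ admits an integer solution $j$ exactly when $N(d_i-1) \equiv \ell \pmod{m}$, and when it does, $j = (d_i - 1) - \frac{\ell - (d_i-1)N}{m}$ is uniquely determined. Comparing with the Springer count yields (i). Part (ii) then follows from Lemma \ref{l:Loc(X,nu)}, which identifies $\Loc(X,\nu)$ with tuples $(X_{-N+1}, \ldots, X_{-1}) \in \prod_{\ell = -N+1}^{-1} \ct_{X,\ell}$, whence
\[
\Loc(X,\nu) \simeq \prod_{\ell = -N+1}^{-1} \ct_{X,\ell} \simeq \bA^{\sum_{\ell} \dim \ct_{X,\ell}} = \bA^{|A_\nu|},
\]
using $A_\nu = \bigsqcup_{\ell = -N+1}^{-1} A_{\nu, \ell}$.

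The only genuine point requiring care is matching conventions when invoking Springer's eigenvalue formula, since sign and conjugation choices vary in the literature; once the normalization is pinned down to agree with the paper's convention $\theta = \Ad_{\crho(\zeta_m)}$ and the eigenvalue $\zeta_m^{-N}$ on $X$, the rest is a routine combinatorial verification, and I do not anticipate any substantial obstacle.
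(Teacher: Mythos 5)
Your argument is correct and essentially coincides with the paper's: the combinatorial count of $A_{\nu,\ell}$ (unique $j$ for each $i$ with $N(d_i-1)\equiv\ell\bmod m$) is identical, and the multiplicities $\dim\ct_{X,\ell}$ come from the same regular-element principle — the paper simply cites Panyushev's theorem on $\theta$-groups, applied to $\theta'=\theta^{-N^{-1}}$ so that $X$ sits in degree $1$, where you invoke Springer's eigenvalue formula directly on $\theta|_{\ct_X}$ (harmless for Weyl groups, where the two normalizations $\zeta^{1-d_i}$ and $\zeta^{d_i-1}$ give the same multiset). No gap.
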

    \begin{proof}
    	Denote $k_\ell:=|\{1\leq i\leq n\mid d_i-1\equiv\ell\mod m\}|$.
    	Denote by $N^{-1}\in\bZ/m\bZ$ the inverse of $N\mod m$.
    	We have
    	\begin{align*}
    	&|\{(i,j)\in A_\nu\mid \ell=\ell_{i,j} \}|\\
    	=&|\{1\leq i\leq n\mid \ell\equiv(d_i-1)N\mod m\}|\\
    	=&|\{1\leq i\leq n\mid d_i-1\equiv N^{-1}\ell\mod m\}|\\
    	=&k_{N^{-1}\ell}.
    	\end{align*}
        
        Let $\theta'=\theta^{-N^{-1}}$, 
        $\Lg_i'=\Lg_{-N^{-1}i}$ the $\zeta_m^i$-eigenspace of $\theta'$.
        Then $X\in\Lg_{-N}=\Lg_1'$ is regular semisimple.
        Applying \cite[Theorem 4.2.(i)]{Panyushev} and its proof to $\theta'$,
        we obtain 
        \[
        k_{N^{-1}\ell}
        =\dim(\ct_X\cap\Lg_{-N^{-1}\ell}')
        =\dim(\ct_X\cap\Lg_\ell)
        =\dim(\ct_{X,\ell}).
        \]
        This proves part (i).
        Part (ii) follows from (i) immediately.
    \end{proof}

    By Lemma \ref{l:Loc(X,nu)}, we have isomorphism
    \[
    \Loc(X,\nu)\simeq\bigoplus_{-N<\ell\leq -1}\ct_{X,\ell}.
    \]
    Fix a basis $\{E_a^\ell\mid 1\leq a\leq\dim\ct_{X,\ell}\}$ of $\ct_{X,\ell}$.
    Let $e_a^\ell$ be the coordinate on $\Loc(X,\nu)$ for $E_a^\ell$.
    
	\begin{prop}\label{p:oper fiber over Loc}\mbox{}		
		\begin{itemize}
			\item [(i)] The space $\Op_{\Lg}(X,\nu)$ is isomorphic to
			the following affine space of oper canonical forms:
			\[
			\{
			\td+(p_{-1}+\sum_{i=1}^n\sum_{\ell_{i,j}\geq -N} v_{i,j}t^{-j-1}p_i)\td t\mid\ v_{i,j}\in\bC,\ 
			p_{-1}+\sum_{m|d_iN}v_{i,d_i+\frac{d_iN}{m}-1}p_i\in\LG\cdot X
			\}.
			\]
			
			\item [(ii)]
			In terms of the basis $v_{i,j}$ and $E_a^\ell$,
			the surjection $p:\Op_{\Lg}(X,\nu)\rightarrow\Loc(X,\nu)$
			between two affine spaces
			is of the form
			\begin{equation}
				e_a^\ell=f_{a,\ell}(v_{ij}\mid \ell_{i,j}<\ell)
				+\sum_{\ell_{i,j}=\ell}c_{(i,j),(a,\ell)}v_{i,j},
			\end{equation}
		    where $-N<\ell\leq -1$,
		    $f_{a,\ell}$ is a polynomial in those $v_{ij}$ satisfying $\ell_{i,j}<\ell$,
		    $c_{(i,j),(a,\ell)}\in\bC$.
		    
		    \item [(iii)]
		    The map $p:\Op_{\Lg}(X,\nu)\rightarrow\Loc(X,\nu)$
			factors as the projection forgetting 
			$v_{i,j}$, $\ell_{i,j}\geq0$
			composed with an isomorphism
			\begin{equation}\label{eq:Op=Loc(X,nu)}
				\begin{split}
					&\{\td+(p_{-1}+\sum_{i=1}^n\sum_{-N\leq\ell_{i,j}\leq -1} v_{ij}t^{-j-1}p_i)\td t\mid\ v_{i,j}\in\bC,\
 					p_{-1}+\sum_{m|d_iN}v_{i,d_i+\frac{d_iN}{m}-1}p_i\in\LG\cdot X\}\\
					&\simeq\Loc(X,\nu).
				\end{split}
			\end{equation}
		    In particular,
		    the linear transformation
		    $\phi_\ell:
		    (v_{i,j})_{\ell_{i,j}=\ell}\mapsto
		    (\sum_{\ell_{i,j}=\ell}c_{(i,j),(a,\ell)}v_{i,j})_a$	
		    is invertible for any $-N<\ell\leq -1$ with
		    $\dim\ct_{X,\ell}>0$.		    
		\end{itemize}
	\end{prop}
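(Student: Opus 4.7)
The plan is to prove parts (i)--(iii) in sequence, relying primarily on the algorithm of \S\ref{sss:algorithm from oper form to canonical form}, which in the isoclinic setting simplifies substantially: there is only one slope $r_{(1)} = \nu$ and the associated Levi sequence consists of the single maximal torus $\cT_X$. For part (i), I would first invoke the oper slope formula \eqref{eq:oper slope}: since any $\nabla \in \Loc(X,\nu)$ has slope $\nu = N/m$, the oper coefficients must satisfy $v_{i,j} = 0$ whenever $\ell_{i,j} < -N$. After gauge-transforming the oper form by $\crho(t^{\nu+1})$ into the shape of \eqref{eq:0}, and using that $\crho$ acts on $p_i$ with weight $d_i - 1$, a direct computation shows that the coefficient of $t^{-\nu}\frac{\td t}{t}$ becomes exactly $p_{-1} + \sum_{m \mid d_iN} v_{i,\,d_iN/m+d_i-1}\, p_i$. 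For the underlying connection to lie in $\Loc(X,\nu)$, the first step of the algorithm forces this leading term---which is automatically regular as it sits in the Kostant section---to be $\LG$-conjugate to $X$; since the Kostant section meets each regular orbit in exactly one point, this is the stated constraint.

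For part (ii), I would iterate the algorithm starting from \eqref{eq:0}. Because $X$ is regular semisimple with $\fz_{\Lg}(X) = \ct_X$, the successive gauge transforms by root-subgroup elements of the form $\exp(-\alpha(X)^{-1}\pi_\alpha(Y)t^k)$ from the proof of Lemma \ref{l:isoclinic can remove regular part} iteratively push the equation into $\ct_X(\!(t)\!)\frac{\td t}{t}$. Tracking how these recursive substitutions propagate yields the triangular form in (ii): each graded piece $X_\ell \in \ct_{X,\ell}$ receives a direct linear contribution from the $v_{i,j}$ with $\ell_{i,j} = \ell$ (governed by the projection of $p_i$ to $\ct_{X,\ell}$) plus a polynomial correction $f_{a,\ell}$ in the $v_{i',j'}$ with $\ell_{i',j'} < \ell$ already processed. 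For part (iii), the factoring through the projection forgetting $v_{i,j}$ with $\ell_{i,j} \geq 0$ is immediate from Lemma \ref{l:isoclinic can remove regular part}: after the first gauge transform these coefficients contribute only to the regular part of the transformed connection, which can be gauged away without affecting the canonical form. The resulting map goes between affine spaces both of dimension $|A_\nu|$ by Lemma \ref{l:dim match conn-oper}(ii), and the surjectivity of $p$ onto $\Loc_{\LG}(D^\times)$ from \cite{FZOper} restricts to surjectivity onto $\Loc(X,\nu)$; inducting on $\ell$ from most negative upward and using the triangular structure, each $\phi_\ell$ must be surjective onto $\ct_{X,\ell}$, and the dimension equality $|A_{\nu,\ell}| = \dim \ct_{X,\ell}$ from Lemma \ref{l:dim match conn-oper}(i) upgrades this to invertibility.

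The main obstacle is the careful bookkeeping in part (ii): verifying that the nonlinear corrections $f_{a,\ell}$ genuinely depend only on $v_{i',j'}$ with $\ell_{i',j'} < \ell$ requires tracking the order in which the algorithm's gauge transforms are applied, and cleanly separating the truly ``diagonal'' linear piece $\phi_\ell$ from the effects of lower-level substitutions. Moreover, while the invertibility of $\phi_\ell$ follows abstractly from the surjectivity/dimension-count argument in (iii), a conceptual explanation---identifying $\phi_\ell$ with the projection of the $\crho$-homogeneous basis $\{p_i\}$ of $\Lg^{p_1}$ onto the $\theta$-graded pieces $\ct_{X,\ell}$---ties back to the compatibility between the principal $\mathfrak{sl}_2$ grading and the stable grading from \cite{RLYG} that underlies \cite[Theorem 4.2]{Panyushev} (already exploited in Lemma \ref{l:dim match conn-oper}).
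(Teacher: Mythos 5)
Your proposal is correct and follows essentially the same route as the paper: the slope formula plus the gauge transform by $\crho(u^{N+m})$ to identify the leading Kostant-section coefficient for (i), the algorithm of Lemma \ref{l:isoclinic can remove regular part} to obtain the triangular form in (ii), and surjectivity of $p$ combined with the dimension count of Lemma \ref{l:dim match conn-oper} to deduce invertibility of each $\phi_\ell$ in (iii). The only (shared) point of terseness is that the induction extracting surjectivity of each $\phi_\ell$ from surjectivity of the triangular map needs the dimension count interleaved at each level (so that the lower-level fiber is a single point), but this is exactly how the paper's argument is meant to be read.
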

    \begin{proof}
    	(i):
    	Let $\nabla\in\Loc(X,\nu)$.
    	Let $\nabla_\chi\in p^{-1}(\nabla)$ 
    	with oper canonical form \eqref{eq:oper form}.
    	From the slope formula \eqref{eq:oper slope},
    	$v_{i,j}=0$ for $\ell_{i,j}<-N$,
    	and at least one of $v_{i,j}$ with $\ell_{i,j}=-N$ is nonzero.
    	
    	Fix $u=t^{1/m}$.
    	Applying $\Ad_{\crho(u^{N+m})}$ to $\nabla_\chi$,
    	we obtain
    	\begin{equation}\label{eq:oper to canonica form}
    	\td+m[u^{-N}(p_{-1}+\sum_{m|d_iN}v_{i,d_i+\frac{d_iN}{m}-1}p_i)+
    	\sum_{i=1}^n\sum_{\ell_{i,j}>-N}v_{i,j}u^{\ell_{i,j}}p_i]\frac{\td u}{u}
    	-(N+m)\crho\frac{\td u}{u}.
    	\end{equation}
    	
    	Note that the leading term of $\nabla$ with respect to $u$
    	is $mX$, due to $\frac{\td t}{t}=m\frac{\td u}{u}$.
    	By definition,
    	the semisimple part of $p_{-1}+\sum_{m|d_iN}v_{i,d_i+\frac{d_iN}{m}-1}p_i$
    	is conjugated to $X$.
    	Since $X$ is regular semisimple,
    	$p_{-1}+\sum_{m|d_iN}v_{i,d_i+\frac{d_iN}{m}-1}p_i$ must also be.
    	Thus $p_{-1}+\sum_{m|d_iN}v_{i,d_i+\frac{d_iN}{m}-1}p_i\in\LG\cdot X$
    	and is the unique such element in the Kostant section.
    	This proves (i).
    	
    	(ii):
    	We resume the computation of canonical form in part (i).
    	By Lemma \ref{l:isoclinic can remove regular part},
    	we can drop $v_{i,j}$ for $\ell_{i,j}>0$.
    	Also, since $m$ is elliptic, $\ct_{X,0}=0$,
    	so that the canonical form has vanishing polar term.
    	This proves the first assertion in part (iii).
    	
    	To reduce the equation into a canonical form as in \eqref{eq:Loc(X,nu)},
    	we first conjugate by a fixed constant element $g\in\LG$
    	such that $\Ad_g(p_{-1}+\sum_{m|d_iN}v_{i,d_i+\frac{d_iN}{m}-1}p_i)=X$.
    	Next we apply the same algorithm as in the proof of 
    	Lemma \ref{l:isoclinic can remove regular part}
    	to project $v_{i,j}u^{\ell_{i,j}}p_i$ into $\ct_{X,\ell_{i,j}}u^{\ell_{i,j}}$
    	and eliminate the regular part.
    	Part (ii) is clear from the calculation procedure.
    	
    	(iii):
    	We have already seen in the above discussion that
    	$p$ factors through the projection to 
    	\[
    	\{\td+(p_{-1}+\sum_{i=1}^n\sum_{-N\leq\ell\leq -1} v_{ij}t^{-j-1}p_i)\td t\mid\
    	v_{i,j}\in\bC,\ p_{-1}+\sum_{m|d_iN}v_{i,d_i+\frac{d_iN}{m}-1}p_i\in\LG\cdot X\},
    	\]
    	on which it is isomorphic to a surjective linear map from
    	$\{v_{i,j}\mid -N<\ell_{i,j}\leq -1\}$
    	to $\{e_a^\ell\mid-N<\ell\leq 0\}$.
    	By Lemma \ref{l:dim match conn-oper},
    	this linear map must be invertible.
    	This completes the proof of (iii).
    \end{proof}

    We can regard the above \eqref{eq:Op=Loc(X,nu)} as 
    the `minimal' oper canonical form of the formal connection.

    \begin{rem}
    	\begin{itemize}
    		\item [(i)]
    		Let $S$ be the centralizer of
    		$\tilde{X}=p_{-1}+\sum_{m|d_iN}v_{i,d_i+\frac{d_iN}{m}-1}t^{-d_i-\frac{d_iN}{m}}p_i$.
    		Then the connections in \eqref{eq:Op=Loc(X,nu)}
    		are called \emph{$S$-toral} of slope $\nu$ in \cite{KSRigid}.
    		
    		\item [(ii)]
    		When $m$ is not necessarily elliptic,
    		we still have a slight variant of 
    		Proposition \ref{p:oper fiber over Loc}
    		with $\Loc(X,\nu)$
    		replaced by \emph{irregular types}
    		of such formal connections.
    	\end{itemize}
    \end{rem}

	\subsection{Global extension}
	We write down an extension to $\bP^1-\{0\}$ of the isoclinic formal connections
	$\Loc(X,\nu)$.
	
	\begin{prop}\label{p:global ext nu>=1}
		Assume $\nu=\frac{N}{m}$.
		Let $\nabla$ be a $\LG$-connection over $\bGm$ with following equation:
		\begin{equation}\label{eq:global ext}
			\td+(t^{-2}p_{-1}+\sum_{i=1}^n\sum_{-N\leq\ell_{i,j}\leq -1} v_{ij}t^{-j+2d_i-3}p_i)\td t,\quad	p_{-1}+\sum_{m|d_iN}v_{i,d_i+\frac{d_iN}{m}-1}p_i\in\LG\cdot X.
		\end{equation}
	    Denote by $\Loc(X,\nu,\bP^1-\{0\})$ the isomorphism classes of such connections.
	\begin{itemize}
		\item [(i)]
		$\nabla|_{D_0^\times}$ is isomorphic to the isoclinic formal connection
		\[
		\td+(p_{-1}+\sum_{i=1}^n\sum_{-N\leq\ell_{i,j}\leq -1} v_{ij}t^{-j-1}p_i)\td t.
		\]
		The restriction to $D_0^\times$
		induces an isomorphism 
		$\Loc(X,\nu,\bP^1-\{0\})\simeq\Loc(X,\nu)$
		factoring through \eqref{eq:Op=Loc(X,nu)}.
		In particular, 
		every connection in $\Loc(X,\nu,\bP^1-\{0\})$
		is represented by a unique equation of the form \eqref{eq:global ext}.
		
		\item [(ii)]
		 When $\nu\geq 1$, 
		 $\nabla$ has trivial monodromy at $\infty$.
	\end{itemize}
	\end{prop}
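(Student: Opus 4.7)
The plan is to reduce part (i) to Proposition \ref{p:oper fiber over Loc}(iii) by producing an explicit $\LG(\!(t)\!)$-gauge transform from the restriction of \eqref{eq:global ext} at $0$ to the minimal oper canonical form appearing in \eqref{eq:Op=Loc(X,nu)} \emph{with the same coefficients $v_{ij}$}. The transform I have in mind is a product $g_2 g_1$ of two Borel-valued gauges. First, take $g_1 = \crho(t^{-2})$: since $\Ad_{\crho(t^k)}$ scales $p_{-1}$ by $t^{-k}$ and each $p_i$ by $t^{k(d_i-1)}$, the choice $k = -2$ converts $t^{-2}p_{-1}$ into $p_{-1}$ and every $v_{ij}t^{-j+2d_i-3}p_i$ into $v_{ij}t^{-j-1}p_i$, while $-g_1^{-1}\td g_1$ contributes an extra $2\crho\,\frac{\td t}{t}$. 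Then take $g_2 = \exp(-t^{-1}p_1)$: using the principal $\Sl_2$-identities $[p_1,p_{-1}] = 2\crho$, $[p_1,2\crho] = -2p_1$, together with $[p_1,p_i] = 0$ for $p_i \in \Lg^{p_1}$, a direct computation shows that the $-2t^{-1}\crho$ from $\Ad_{g_2}(p_{-1})$ cancels the residual $2t^{-1}\crho$, while the $-t^{-2}p_1$ from $\Ad_{g_2}(p_{-1})$, the $+2t^{-2}p_1$ from $\Ad_{g_2}(2\crho/t)$, and the $-t^{-2}p_1$ from $-g_2^{-1}\td g_2$ sum to zero. The net result is exactly $\td+(p_{-1}+\sum v_{ij}t^{-j-1}p_i)\td t$.

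Because $g_1,g_2$ preserve the standard Borel, this identifies the restricted oper on $D_0^\times$ with the canonical form in \eqref{eq:Op=Loc(X,nu)} whose coefficients match those of the global equation. Composing with the isomorphism of Proposition \ref{p:oper fiber over Loc}(iii), the restriction map $\Loc(X,\nu,\bP^1-\{0\}) \to \Loc(X,\nu)$ factors through the affine space of \eqref{eq:Op=Loc(X,nu)} and acts as the identity on the parameters $\{v_{ij}\}$, hence is bijective. Injectivity yields both uniqueness of representation of each class by an equation of the form \eqref{eq:global ext} and injectivity of the restriction map, while surjectivity on parameters shows every class in $\Loc(X,\nu)$ is realized.

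For part (ii), I would change coordinate to $s = t^{-1}$ with $\td t = -s^{-2}\td s$, rewriting \eqref{eq:global ext} as $\td + \bigl(-p_{-1} - \sum v_{ij}\,s^{j-2d_i+1}p_i\bigr)\td s$. Holomorphy at $s = 0$ reduces to the inequality $j - 2d_i + 1 \geq 0$ for every pair $(i,j)$ appearing. The constraint $\ell_{i,j} \leq -1$ from \eqref{eq:A_nu tA_nu} rearranges to $j \geq (d_i-1) + \frac{(d_i-1)N+1}{m}$; under $\nu = N/m \geq 1$ one has $(d_i-1)\nu \geq d_i-1$ strictly shifted by $1/m > 0$, forcing the ceiling to be at least $d_i$, and hence $j \geq 2d_i - 1$ as needed. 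Once the connection is holomorphic at $s = 0$, a recursive solution of $g^{-1}\td g = A(s)\,\td s$ with $g(0) = 1$ yields a trivialization $g \in \LG(\bC[\![s]\!])$, proving trivial local monodromy at $\infty$.

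The principal obstacle is the cascade of principal-$\Sl_2$ cancellations in (i): it is crucial that the torus correction $2\crho/t$ produced by $g_1$ and the accidentally introduced nilpotent term $t^{-2}p_1$ generated by $g_2$ both cancel exactly, leaving all $v_{ij}$ unchanged. Once this identity is verified, the rest of (i) is a formal consequence of Proposition \ref{p:oper fiber over Loc}, and (ii) follows from the elementary inequality above together with the standard trivialization of regular formal connections.
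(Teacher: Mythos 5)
Your proposal is correct, and part (ii) coincides with the paper's argument (same coordinate change $s=t^{-1}$, same rearrangement of $\ell_{i,j}\leq-1$ into $j-2d_i+1\geq 0$, hence holomorphy and trivial monodromy at $\infty$). For part (i) you take a somewhat different and more explicit route. The paper substitutes $t=u^m$, gauges by $\crho(u^{N-m})$, and then compares the result with \eqref{eq:oper to canonica form} via the canonical-form algorithm of Proposition \ref{p:oper fiber over Loc}(ii); in particular it only matches formal canonical forms after passing to the ramified cover. You instead exhibit the single unramified gauge $\exp(-t^{-1}p_1)\,\crho(t^{-2})\in\cB(\bC[t,t^{-1}])$ carrying \eqref{eq:global ext} to $\td+(p_{-1}+\sum v_{ij}t^{-j-1}p_i)\td t$ with the \emph{same} coefficients; I checked the $\Sl_2$-cancellations ($-1+2-1=0$ for $t^{-2}p_1$ and $-2+2=0$ for $t^{-1}\crho$, using $[p_1,p_{-1}]=2\crho$, $[p_1,2\crho]=-2p_1$, $[p_1,p_i]=0$) and they are exact. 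This buys you a statement over all of $\bGm$, not just formally at $0$, and it makes the identification of parameters in the bijection $\Loc(X,\nu,\bP^1-\{0\})\simeq\Loc(X,\nu)$ completely transparent; from there the reduction to Proposition \ref{p:oper fiber over Loc}(iii) and Lemma \ref{l:Loc(X,nu)} for injectivity/uniqueness is the same as in the paper. The only points worth making explicit in a write-up are that $\crho\in X_*(\cT)$ because $\LG$ is adjoint (so $\crho(t^{-2})$ is a legitimate element of $\LG(F)$, as the paper also tacitly uses), and that the condition $p_{-1}+\sum_{m\mid d_iN}v_{i,d_i+\frac{d_iN}{m}-1}p_i\in\LG\cdot X$ is untouched by your gauge, so the restriction indeed lands in $\Loc(X,\nu)$.
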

    \begin{proof}
    	(i):
    	Replace $t=u^m$ and apply gauge transform by $\crho(u^{N-m})$ to
    	\eqref{eq:global ext}.
    	We obtain
    	\[
    	\td+m(p_{-1}u^{-N}+\sum_{i=1}^n\sum_{-N\leq\ell_{i,j}\leq -1} v_{ij}u^{\ell_{i,j}}p_i)\frac{\td u}{u}-(N-m)\crho\frac{\td u}{u}.
    	\]
    	
    	By the proof of Proposition \ref{p:oper fiber over Loc}.(ii),
    	the above equation has the same canonical form as 
    	\eqref{eq:oper to canonica form},
    	thus is isomorphic to a connection as \eqref{eq:Op=Loc(X,nu)}.
    	Part (i) follows from Lemma \ref{l:Loc(X,nu)} 
    	and Proposition \ref{p:oper fiber over Loc}.
    	
    	(ii):
    	Denote $s=t^{-1}$.
    	We have
    	\[
    	\nabla|_{D_\infty^\times}\simeq
    	\td-(p_{-1}+\sum_{i=1}^n\sum_{-N\leq\ell_{i,j}\leq-1}v_{ij}s^{j-2d_i+1})\td s.
    	\]
    	Note that 
    	\[
    	\ell_{i,j}\leq-1
    	\quad\Rightarrow\quad
    	\frac{(d_i-1)(N-m)+1}{m}-1\leq j-2d_i+1.
    	\] 
    	Since $N\geq m$,
    	$j-2d_i+1>-1$,
    	so that $\nabla|_{D_\infty^\times}$ is regular.
    \end{proof}
    
    \begin{rem}
    	\begin{itemize}
    		\item [(i)]
    		In the above and the proof of surjectivity in \eqref{eq:Op=Loc(X,nu)},
    		we did not essentially use the ellipticity of $m$.
    		Thus we gain an explicit construction of
    		a connection on $\bP^1-\{0\}$
    		with arbitrary isoclinic irregular type at $0$.
    		In general, it is proved in
    		\cite[Corollary 1.14.(ii)]{JYDeligneSimpson} that
    		there exists a connection on $\bGm$
    		with arbitrary isoclinic irregular type at $0$
    		and arbitrary regular monodromy at $0$
    		(when $\nu\geq1$, the monodromy can be arbitrary).
    		
    		\item [(ii)]
    		If we regard the equation in \eqref{eq:Op=Loc(X,nu)}
    		as with coefficients in $\bC[t,t^{-1}]$,
    		then they will have regular singularity at $\infty$,
    		but with not necessarily trivial monodromy.
    	\end{itemize}
    \end{rem}

	\section{Central support of toral K-types}\label{s:toral central support}
	The counterpart on the automorphic side
	of irreducible isoclinic formal connections 
	will be the so-called
	\emph{toral supercuspidal representations} \cite[\S4.8]{KalethaRegular}.
	This family of supercuspidal representations
	are originally due to Adler \cite{Adler},
	and contain epipelagic representations as special cases.
	We give a brief review of them following the exposition in
	\cite[\S3.6]{FintzenIHES}.
	
	\subsection{Recollection on toral supercuspidal representations}\label{ss:toral sc repn}
	
	In one word, 
	toral supercuspidal representations
	are those supercuspidal representations
	induced from Yu data of the form
	$(S\subset G;x;\nu;\phi,1)$,
	where the only proper twisted Levi subgroup is an elliptic torus.
	To make it more explicit,
	we carry out the construction in \S\ref{s:sc refined leading term}
	in this situation.
	
	\subsubsection{}
	Let $m$ be a regular elliptic number of $W=N_G(T)/T$.
	Let $\crho_G\in X_*(T/Z(G))$ be the half sum of positive coroots,
	and $x$ the point in the closure of fundamental alcove
	of Bruhat-Tits building $\mathcal{B}(G,T)$
	that is conjugated to $\frac{\crho_G}{m}$.
	Write $\lambda=mx\in X_*(T/Z(G))$, which is conjugated to $\crho_G$. 
	Then $\Theta=\Ad_{\lambda(\zeta_m)}$ defines a grading
	\[
	\fg=\bigoplus_{i\in\bZ/m\bZ}\fg_i.
	\]
		
	Let $\nu=\frac{N}{m}$ where $(N,m)=1$.
	As on the spectral side,
	there exists regular semisimple element 
	$Y\in\fg_{-N}^{\mathrm{rs}}\simeq\fg_N^{*,\mathrm{rs}}$.
	Denote $T_Y=C_G(Y)$, $\ft_Y=\fz_{\fg}(Y)$.
	Since $Y$ is $\Theta$-eigen, $\Theta$ preserves $\ft_Y$
	and induces grading
	\[
	\ft_Y=\bigoplus_{i\in\bZ/m\bZ}\ft_{Y,i}.
	\]
	Note that since the action of $\Theta$ on $T_Y$
	is a regular elliptic element of order $m$ in the Weyl group $W(G,T_Y)$,
	we have $\ft_{Y,0}=0$.
	
	Fix $u=t^{1/m}$ and denote $E=\bC(\!(u)\!)$.
	We have a Moy-Prasad grading on $\fg(\!(t)\!)$:
	\begin{equation}\label{eq:g((t)) to g((u))}
		\psi:\fg(\!(t)\!)\simeq\widehat{\bigoplus_{i\in\bZ}}\fg_iu^i,\quad
		X(t)\mapsto \Ad_{\lambda(u)}X(u^m).
	\end{equation}

	Denote $r_m:u\mapsto\zeta_m^{-1}u$ and 
	$\sigma=\Theta\times r_m\in\mathrm{Aut}(\fg(E))$.
	The RHS of the above is $\fg(E)^\sigma$.
	Henceforth we will omit $\psi$ and identify $\fg(\!(t)\!)$
	with $\widehat{\bigoplus}_{i\in\bZ}\fg_iu^i$.
	Denote $\fg(\!(t)\!)_i\simeq\fg_iu^i$
	and $\fg(\!(t)\!)_{\geq i}\simeq\widehat{\bigoplus}_{j\geq i}\fg(\!(t)\!)_i$.
	
	The element $Y\in\fg_{-N}^{\mathrm{rs}}$ defines an element
	$Y'=\psi^{-1}(Yu^{-N})$, 
	which is regular semisimple in $\fg(\!(t)\!)$.
	Let $S=C_{G(\!(t)\!)}(Y')$ be the centralizer
	and $\fs=\fz_{\fg(\!(t)\!)}(Y')$.
	Since $S\simeq(\mathrm{Res}_{E/F}T_Y\times_\bC E)^{\Theta\times r_m^{-1}}$,
	$S$ is an elliptic torus.
	We have
	\[
	\fs\simeq\widehat{\bigoplus_{i\in\bZ}}\ft_{Y,i}u^i.
	\]
	
	Note that $x=\lambda/m$ is the unique point in the Bruhat-Tits building of $S$.
	Denote by $S_r$ the Moy-Prasad subgroup of $S$ with depth $r\in\bR_{\geq0}$
	and $S_{r+}=\bigcup_{s>r}S_s$.
	
	Now take arbitrary elements $Y_i\in\ft_{Y,i}$ for $-N+1\leq i\leq -1$
	and denote $Y_{-N}=Y$.
	Then $\psi^{-1}(\sum_{i=-N}^{-1}Y_iu^i)$
	determines a character of $S_{0+}$ of depth $\nu$.
	Let $\phi:S\rightarrow\bC^\times$ be a character 
	whose differential is as above.
	Such $(S,\phi)$ gives a \emph{tame elliptic regular pair} of depth $\nu$
	\cite[Definition 3.7.5]{KalethaRegular}.
	
	\subsubsection{}
	We need to extend $\phi$ further.
	Let $P=G_x$ be the parahoric subgroup determined by $x$,
	and $P(i)=G_{x,i/m}$ its Moy-Prasad subgroups.
	Denote their Lie algebras by $\fp,\fp(i)$.
	Denote $S(i)=S\cap P(i)$, $\fs(i)=\fs\cap\fp(i)$.
	Note $S(1)=S_{0+}$.
	The construction varies slightly depending on the parity of $N$.
	
	Case 1: $N$ is odd.
	
	In this case $G_{x,\frac{\nu}{2}}=G_{x,\frac{\nu}{2}+}=P(\frac{N+1}{2})$.
	Consider the root subspaces with respect to $\ft_Y$:
	\begin{equation}\label{eq:tau}
		\fg=\ft_Y\oplus\tau,\qquad
		\tau=\bigoplus_{\alpha\in\Phi(\fg,\ft_Y)}\fg_\alpha.
	\end{equation}
	The automorphism acts on $\ft_Y$ and $\Phi(\fg,\ft_Y)$,
	thus acts on $\tau$.
	Let $\tau_i=\tau\cap\fg_i$, then
	\[
	\fg_i=\ft_{Y,i}\oplus\tau_i.
	\]
	Here although the notations $\fg_i,\tau_i$ 
	coincide with the Lie algebras of twisted Levi subalgebras in a general Yu datum, 
	this should raise no confusion.
	
	Since $P(\frac{N+1}{2})/P(N+1)$ is abelian and unipotent,
	$P(\frac{N+1}{2})/P(N+1)\simeq\fp(\frac{N+1}{2})/\fp(N+1)$.
	Consider projection
	\begin{align*}
		\pi:P(\frac{N+1}{2})\twoheadrightarrow&
		P(\frac{N+1}{2})/P(N+1)\simeq\fp(\frac{N+1}{2})/\fp(N+1)
		\simeq
		\bigoplus_{i=\frac{N+1}{2}}^{N}\fg_iu^i\\
		\twoheadrightarrow&
		\bigoplus_{i=\frac{N+1}{2}}^{N}\ft_{Y,i}u^i
		\simeq S(\frac{N+1}{2})/S(N+1).
	\end{align*}
	
	Let $J=S(1)P(\frac{N+1}{2})$.
	Define a character $\tphi$
	by $\tphi(G_{x,\nu+})=1$,
	$\tphi|_{S(1)}=\phi$,
	$\tphi|_{G_{x,\frac{\nu}{2}}}=\phi\circ\pi$.\\	
	
	Case 2: $N$ is even.
	
	Note that $Y\in\fg_N^*$ induces a non-degenerate symplectic form on
	$G_{x,\frac{\nu}{2}}/S(\frac{N}{2})G_{x,\frac{\nu}{2}}+
	\simeq\fg_{\frac{N}{2}}/\fs_{\frac{N}{2}}\simeq\tau_{\frac{N}{2}}$ by
	\begin{equation}\label{eq:symp form}
		(x,y)\in\tau_{\frac{N}{2}}\times\tau_{\frac{N}{2}}
		\rightarrow\langle Y,[x,y]\rangle.
	\end{equation}

	We choose a Lagrangian subspace $\fm\subset\tau_{\frac{N}{2}}$ as follows.
	Recall that the order $m$ automorphism $\Theta$ is regular elliptic,
	so that the subgroup of automorphisms generated by $\Theta$ 
	acts freely on $\Phi(\fg,\ft_Y)$.
	For each $\Theta$-orbit $O\subset\Phi(\fg,\ft_Y)$, 
	pick a representative $\alpha_O\in O$.
	Fix a basis $X_{\alpha_O}\in\fg_{\alpha_O}$.
	Then a basis of $\tau_{\frac{N}{2}}$ can be given by
	\begin{equation}\label{eq:tau_N/2 basis}
		X_{O,\frac{N}{2}}=X_{\alpha_O}+\zeta_m^{-\frac{N}{2}}\Theta(X_{\alpha_O})+\zeta_m^{-N}\Theta^2(X_{\alpha_O})+\cdots+\zeta_m^{-(m-1)\frac{N}{2}}\Theta^{m-1}(X_{\alpha_O}).
	\end{equation}
	Here $\Theta^j(X_{\alpha_O})\in\fg_{\Theta^j(\alpha_O)}$.
	We claim that for any root $\alpha$ and $j$,
	$\Theta^j\alpha\neq-\alpha$.
	Otherwise $\Theta^{2j}\alpha=\alpha$,
	so that $m|2j$.
	Since $N$ is even, $(N,m)=1$, thus $m$ is odd, $m|j$,
	$\Theta^j\alpha=\alpha\neq-\alpha$.
	
	Now we define a partition $\Theta\backslash\Phi(\fg,\ft_Y)=A\sqcup B$
	of orbits as follows.
	Take any orbit $O$ and put it in $A$.
	For any $\alpha\in O$, from previous discussion we see $-\alpha\not\in O$,
	and we put the orbit of $-\alpha$ in B.
	Repeat this for the rest of orbits.
	The resulting partition satisfies $|A|=|B|=\frac{1}{2}\dim\tau_{\frac{N}{2}}$,
	and any $\alpha,\beta$ in the union of orbits in $A$ satisfy $\alpha\neq-\beta$.
	We let
	\begin{equation}\label{eq:Lagrangian}
		\fm=\bigoplus_{O\in A}\bC X_{O,\frac{N}{2}}.
	\end{equation}
	Thus $\dim\fm=\frac{1}{2}\dim\tau_{\frac{N}{2}}$.
	For any $O,O'\in A$,
	since $\alpha\neq-\beta$ for any $\alpha\in O$ and $\beta\in O'$, 
	we have $[X_{O,\frac{N}{2}},X_{O',\frac{N}{2}}]\in\tau\cap\fg_N=\tau_N$.
	Since the pairing of $\ft_Y$ with $\tau$ vanishes,
	$\fm$ is isotropic with respect to \eqref{eq:symp form}, 
	thus a Lagrangian of $\tau_{\frac{N}{2}}$.
	
    Let $P(\frac{N}{2})_{\fm}$
	be the preimage of $\fm$
	under $P(\frac{N}{2})/S(\frac{N}{2})P(\frac{N}{2}+1)\simeq\tau_{\frac{N}{2}}$.
	Let $J=S(1)P(\frac{N}{2})_{\fm}$
	and let $\tphi:J\rightarrow\bC^\times$ 
	be the composition of $\phi$ with projection to $S(1)$. 
	
	\subsubsection{}
	We call $(J,\tphi)$ a \emph{toral supercuspidal K-type}.
	It is the level structure associated to 
	Yu datum $(S\subset G;x;\nu;\phi,1)$.
	If we replace loop group with $p$-adic group
	and the choice of Lagrangian with 
	Heisenberg and Weil representation,
	then as a special case of \cite[Theorem 3.1]{FintzenTame},
	the induced representation $\mathrm{c-ind}^{G(F)}_J\tphi$
	is irreducible, hence supercuspidal.

	\subsection{Segal-Sugawara operators}\label{ss:SS operators}
	Recall $\hg=\fg(\!(t)\!)\oplus \bC\cdot\bone$ is the 
	affine Kac-Moody algebra at the  critical level associated to $\fg$. 
	The element $\bone$ is central, 
	and the Lie bracket is given by
	\begin{equation}\label{eq:hg_c Lie bracket}
	[A\otimes f(t), B\otimes g(t)]
	=[A,B]\otimes f(t)g(t)
	-\frac{1}{2}\mathrm{tr}(\ad_A\circ\ad_B)\Res(gdf)\cdot\bone.
	\end{equation}
	 
	Recall that $\Op_{\Lg}(D^\times)$ has coordinates $v_{i,j}$,
	$1\leq i\leq n, j\in\bZ$.
	Let $S_{i,j}\in\fZ$ be the image of $v_{ij}$ via the above isomorphism. 
	This gives a set of topological generators of $\fZ$
	called \emph{Segal-Sugawara operators}, 
	c.f. \cite[\S4.3.1., \S4.3.2.]{FrenkelBook}.
	
	To write down $S_{i,j}$ more explicitly,
	we fix a topological basis of $\Ug$ as follows.
	First we fix a topological basis of $\fg(\!(t)\!)$.
	Recall the (completed) grading given by \eqref{eq:g((t)) to g((u))}.
	Each $\fg(\!(t)\!)_i\simeq\fg_iu^i=\ft_{Y,i}u^i\oplus\tau_iu^i$.
	Fix arbitrary basis $A_{i,k}$, $1\leq k\leq\dim\ft_{Y,i}$ of $\ft_{Y,i}$.
	For $\tau_i$,
	for each $\Theta$-orbit $O$ on $\Phi(\fg,\ft_Y)$, 
	pick a representative $\alpha_O\in O$.
	Fix a basis $X_{\alpha_O}\in\fg_{\alpha_O}$.
	Similarly as for $i=\frac{N}{2}$, 
	a basis of $\tau_i$ can be given by
	\begin{equation}\label{eq:tau_i basis}
		X_{O,i}=X_{\alpha_O}+\zeta_m^{-i}\Theta(X_{\alpha_O})+\zeta_m^{-2i}\Theta^2(X_{\alpha_O})+\cdots+\zeta_m^{-(m-1)i}\Theta^{m-1}(X_{\alpha_O}).
	\end{equation}
    In particular, $\dim\tau_i=|\Theta\backslash\Phi(\fg,\ft_Y)|=\frac{|\Phi|}{m}$ 
    is constant as seen in \cite[Theorem 4.2.(ii)]{Panyushev}.
	Moreover, 
	$\prod:=\{A_{i,k}u^i,X_{O,i}u^i\mid i\in\bZ, 1\leq k\leq\dim\ft_{Y,i},
	O\in\Theta\backslash\Phi(\fg,\ft_Y)\}$
	gives a basis of $\fg(\!(t)\!)$.
	We put a total order on $\prod$ by requiring
	\[
	\cdots<\fg(\!(t)\!)_{i-1}<\fg(\!(t)\!)_i<\fg(\!(t)\!)_{i+1}<\cdots,
	\]
	$\tau_{Y,i}u^i<\ft_{Y,i}u^i$,
	and fix arbitrary total order on $\{A_{i,k}u^i\}_k$
	and $\{X_{O,i}u^i\}_O$.
	By PBW theorem, a topological basis of $\Ug$ can be given by 
	\begin{equation}\label{eq:Ug PBW basis}
	\{X_{a_1}\otimes X_{a_2}\otimes\cdots\otimes X_{a_k}\mid\
	k\geq 0,\ X_{a_i}\in\prod,\
	X_{a_1}\leq X_{a_2}\leq\cdots\leq X_{a_k}\}.
	\end{equation}
	
	\begin{lem}\label{l:SS operator}
		The operator $S_{i,j}\in\fZ$ 
		acts on a vector in a smooth $\hg$-module
		as a finite sum of tensors
		\[
		X_{a_1}\otimes X_{a_2}\otimes\cdots\otimes X_{a_k}
		\] 
		satisfying
		\[
		\mathrm{(i)}\ X_{a_r}\in\fg_{a_r}u^{a_r}\cap\prod,\qquad
		\mathrm{(ii)}\ k\leq d_i,\qquad
		\mathrm{(iii)}\ \sum_{r=1}^k a_r=m(j-d_i+1).
		\]
	\end{lem}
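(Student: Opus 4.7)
My proof plan rests on three ingredients: the PBW filtration bound on Segal-Sugawara operators, the loop-rotation equivariance of the Feigin-Frenkel isomorphism, and smoothness of the module. First, by the Feigin-Frenkel construction (cf.\ \cite{FrenkelBook}), $S_{i,j}$ is the image of $v_{i,j}$ under \eqref{eq:FF isom} and lies in the $d_i$-th piece of the PBW filtration on $\Ug$; this immediately gives (ii). Expanding $S_{i,j}$ in the topological PBW basis \eqref{eq:Ug PBW basis} built from $\prod$, every contributing monomial has length at most $d_i$ in the chosen basis, and since each element of $\prod$ lies in some $\fg_a u^a = \fg(\!(t)\!)_a$, property (i) is automatic from the choice of basis.

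Next I would compute the loop-rotation weight. The $\bGm$-action on $F=\bC(\!(t)\!)$ by $t\mapsto \mu t$ lifts to compatible actions on both $\Op_{\Lg}(D^\times)$ and $\Ug$, and \eqref{eq:FF isom} is equivariant. On the $\hg$-side, $At^n$ carries weight $n$; since $u=t^{1/m}$ has weight $1/m$, the summand $\fg(\!(t)\!)_a\simeq\fg_a u^a$ carries weight $a/m$. On the oper side, combining the rescaling of $t$ with the gauge transformation by $\crho(\mu)$ needed to restore the canonical form \eqref{eq:oper form}, a short calculation using $\Ad_{\crho(\mu)}p_{-1}=\mu^{-1}p_{-1}$ and $\Ad_{\crho(\mu)}p_i=\mu^{d_i-1}p_i$ shows that $v_{i,j}$ has weight $j-d_i+1$. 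Hence every PBW monomial $X_{a_1}\otimes\cdots\otimes X_{a_k}$ appearing in $S_{i,j}$ satisfies $\sum_r a_r/m=j-d_i+1$, which rearranges to (iii).

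For the finite-sum clause, I would invoke smoothness: pick $M\gg 0$ such that $\fg(\!(t)\!)_{\geq M}$ annihilates the chosen vector. In each monomial, put the factors in PBW order with the highest-$u$-degree factor on the right, using the bracket \eqref{eq:hg_c Lie bracket}; each commutation reduces the PBW length while preserving the total loop weight, so the constraints (i)--(iii) are stable under this reordering. Any monomial whose rightmost factor lies in $\fg(\!(t)\!)_{\geq M}$ then acts as zero. Among the surviving monomials, each $a_r$ is bounded above by $M$; since $\sum a_r=m(j-d_i+1)$ is fixed and $k\leq d_i$, the $a_r$ are also bounded below. Only finitely many monomials contribute.

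The principal obstacle is pinning down the loop-rotation weight convention on the oper side, in particular the precise contribution of the $\crho(\mu)$ gauge transformation that restores oper canonical form after rescaling $t\mapsto\mu t$, and then reconciling that with the $u$-grading coming from \eqref{eq:g((t)) to g((u))}. A secondary technicality is that the reordering step above might \emph{a priori} generate new infinite sums through the central cocycle in \eqref{eq:hg_c Lie bracket}; however, that cocycle only appears between pairs of factors whose $u$-degrees sum to zero and is scalar, so it cannot increase the number of non-annihilated terms. Once these normalizations are fixed, the remaining verification is bookkeeping.
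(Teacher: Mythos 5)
Your skeleton matches the paper's proof in spirit: the paper also gets (ii) from the PBW length bound $k\le d_i$, gets the loop-rotation weight $\sum_r n_r=j-d_i+1$ (citing \cite[Lemma 54]{KXY}), and then converts to the $u$-grading via $\psi$. But your step (iii) has a genuine gap. You claim that ``the summand $\fg(\!(t)\!)_a\simeq\fg_a u^a$ carries weight $a/m$'' under loop rotation because $u=t^{1/m}$ has weight $1/m$. This is false: the identification \eqref{eq:g((t)) to g((u))} is not merely the substitution $t=u^m$ but $X(t)\mapsto\Ad_{\lambda(u)}X(u^m)$, so a root vector $E_\beta t^n$ lands in $\fg_a u^a$ with $a=mn+\langle\lambda,\beta\rangle$, while its loop-rotation weight is $n$. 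Thus $a/m=n+\langle\lambda,\beta\rangle/m\neq n$ in general, and pure loop-rotation equivariance only yields $\sum_r n_r=j-d_i+1$, i.e.\ $\sum_r a_r=m(j-d_i+1)+\langle\lambda,\sum_r\beta_r\rangle$. You flagged exactly this reconciliation as your ``principal obstacle,'' but it is not a normalization issue to be fixed by bookkeeping --- it requires an additional input.

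The missing ingredient is that each PBW monomial of $S_{i,j}$ has total weight zero for the constant Cartan $\ft\subset\fg\subset\hg$, i.e.\ $\sum_r\beta_r=0$; this follows from centrality of $S_{i,j}$ (it commutes with $\ft$, hence is $\mathrm{ad}(\ft)$-invariant, and the PBW basis respects the $\ft$-weight decomposition), and it kills the discrepancy $\langle\lambda,\sum_r\beta_r\rangle$. Equivalently, you may replace pure loop rotation by the twisted $\bGm$-action $\mu\mapsto(\mathrm{rot}_{\mu^m},\Ad_{\lambda(\mu)})$, whose weight grading on $\fg(\!(t)\!)$ \emph{is} the Moy--Prasad grading by $a$; since $S_{i,j}$ is fixed by the inner action of constants, its weight under the twisted action is $m(j-d_i+1)$, giving (iii). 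This is precisely the content of the paper's computation $\sum_r a_r=m\sum_r n_r+\sum_r\langle\lambda,\beta_r\rangle=m(j-d_i+1)$. The rest of your argument --- the PBW bound for (ii), (i) being automatic from the basis, and the finiteness via smoothness (the basis \eqref{eq:Ug PBW basis} is already normal-ordered, factors of degree $\ge M$ on the right annihilate the vector, and the constraint on $\sum_r a_r$ with $k\le d_i$ then leaves finitely many terms) --- is sound and consistent with the paper.
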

	\begin{proof}
		As in \cite[Lemma 54]{KXY},
		$S_{i,j}$ acts as a finite linear combination of tensors
		\[
		E_{\beta_1}t^{n_1}\otimes\cdots\otimes E_{\beta_k}t^{n_k}
		\]
		where $k\leq d_i$, 
		$E_{\beta_r}$ is a basis of $\fg_{\beta_r}$ with respect to $\ft$,
		$\sum_{r=1}^kn_r=j-d_i+1$, $\sum_{r=1}^k\beta_r=0$.
		Rewriting each $E_{\beta_1}t^{n_1}$
		in terms of basis $\prod$,
		we obtain a linear combination of tensors satisfying (i), (ii) of the lemma.
		Moreover, note that the isomorphism $\psi$ is by replacing $t=u^m$
		and applying $\Ad_{\lambda(u)}$.
		We obtain
		\[
		\sum_{r=1}^k a_r
		=m\sum_{r=1}^k n_r+\sum_{r=1}^k\langle\lambda,\beta_r\rangle
		=m(j-d_i+1).
		\]
	\end{proof}

    \subsection{Central characters}
    Recall we defined a subspace $\fj'\subset\fj$ \eqref{eq:j'}.
    Explicitly, in the current situation it is given by
    \[
    \fj'=
    \begin{cases}
    	\bigoplus_{i=\frac{N+1}{2}}^N\tau_iu^i\oplus\fp(N+1),\hspace{1.75cm} 
    	N\text{ odd,}\\
    	\bigoplus_{i=\frac{N}{2}+1}^N\tau_iu^i
    	\oplus\fm u^{\frac{N}{2}}
    	\oplus\fp(N+1),\quad 
    	N\text{ even.}
    \end{cases}
    \]
    
    \begin{lem}\label{l:Lagrangian}
    	The subspace $\fj'\subset\fj$ is an ideal, 
    	so that $\fj'=\fj^+$.
    \end{lem}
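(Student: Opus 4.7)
The plan is to verify $[\fj,\fj']\subset\fj'$ directly, working with the explicit graded descriptions at hand. Since $\fj\subset\fp\subset\fg[\![t]\!]$ is a Lie subalgebra of $\fg(\!(t)\!)$ under the ordinary bracket (the central term in \eqref{eq:hg_c Lie bracket} plays no role inside $\fj$), it is enough to check the inclusion piece by piece on the graded summands of $\fj$ and $\fj'$ coming from the isomorphism \eqref{eq:g((t)) to g((u))}. The reverse inclusion $\fj'\subset\fj^+$ is built into the definition.

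A first reduction is the observation that $\fp(N+1)$ is an ideal of the full parahoric $\fp$ and hence of $\fj$; any bracket that factors through $\fp(N+1)$ automatically lands in $\fp(N+1)\subset\fj'$. This leaves the brackets between the ``non-$\fp(N+1)$'' parts. Concretely, in the odd case I pair the generators $\ft_{Y,i}u^i$ ($i\geq 1$) and $\tau_i u^i$ ($i\geq\tfrac{N+1}{2}$) of $\fj$ with $\tau_j u^j$ ($\tfrac{N+1}{2}\leq j\leq N$); in the even case I add the summand $\fm u^{N/2}$ on both sides. For a bracket $[\ft_{Y,i}u^i,\tau_j u^j]$ the result sits in $\tau_{i+j}u^{i+j}$ because $\ft_Y$ normalizes every root space $\fg_\alpha$, and $i+j>j$ forces it into either a permitted $\tau_{i+j}u^{i+j}$-summand of $\fj'$ or into $\fp(N+1)$. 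For a bracket $[\tau_i u^i,\tau_j u^j]$ with $i,j\geq\tfrac{N+1}{2}$ (respectively $i,j>N/2$ in the even case) the degree $i+j\geq N+1$ pushes the result into $\fp(N+1)$ by sheer grading. The cases $[\ft_{Y,i}u^i,\fm u^{N/2}]$ and $[\tau_i u^i,\fm u^{N/2}]$ ($i>N/2$) are handled by the same grading argument.

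The genuine obstruction is the self-bracket $[\fm u^{N/2},\fm u^{N/2}]$ in the even case. A priori this lies in $\fg_N u^N=(\ft_{Y,N}\oplus\tau_N)u^N$, and only the $\tau_N$-part is known to be in $\fj'$; a nonzero $\ft_{Y,N}$-component would spoil the ideal property. This is precisely the point where the specific Lagrangian \eqref{eq:Lagrangian} enters: the partition $\Theta\backslash\Phi(\fg,\ft_Y)=A\sqcup B$ was arranged so that for any two orbits $O,O'\in A$ and any $\alpha\in O$, $\beta\in O'$ one has $\alpha\neq -\beta$. Expanding $[X_{O,N/2},X_{O',N/2}]$ via \eqref{eq:tau_N/2 basis} yields a sum of brackets $[\Theta^j X_{\alpha_O},\Theta^k X_{\alpha_{O'}}]$, each of which lies in $\fg_{\Theta^j\alpha+\Theta^k\beta}$ with $\Theta^j\alpha+\Theta^k\beta\neq 0$, so no $\ft_Y$-component appears. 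This is exactly the statement $[X_{O,N/2},X_{O',N/2}]\in\tau\cap\fg_N=\tau_N$ recorded during the construction of $\fm$, and thus $[\fm u^{N/2},\fm u^{N/2}]\subset\tau_N u^N\subset\fj'$. Combining the cases gives $[\fj,\fj']\subset\fj'$, so $\fj'$ is already a $\fj$-ideal and equals $\fj^+$.
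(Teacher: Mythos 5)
Your argument is correct and follows essentially the same route as the paper's proof: the brackets involving the toral part $\fs(1)$ preserve $\fj'$ because $\ft_Y$ normalizes the root spaces, the remaining brackets land in $\fp(N+1)$ by depth, and the only delicate point is $[\fm,\fm]\subset\tau_N$, which both you and the paper deduce from the construction of the Lagrangian \eqref{eq:Lagrangian}. Your version merely spells out the graded bookkeeping that the paper compresses into ``it is easy to see'' and ``from the depth consideration.''
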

    \begin{proof}
    	We need to show $[\fj,\fj']\subset\fj'$.
    	Observe $\fj=\fj'+\fs(1)$.
    	It is easy to see that the adjoint action of
    	$\fs(1)$ preserves $\fj'$.
    	To show $[\fj',\fj']\subset\fj'$,
    	from the depth consideration
    	it suffices to show 
    	$[\fm,\fm]\subset\tau_N$
    	when $N$ is even.
    	This follows from the construction of $\fm$ \eqref{eq:Lagrangian}. 
    \end{proof}

    Therefore,
    the quotient \eqref{eq:bj} becomes
    $\bj=\fj/\fj^+\simeq\fs(1)/\fs(N+1)
    \simeq\bigoplus_{1\leq i\leq N}\ft_{Y,i}u^i$.
    
    As in \S\ref{sss:central support},
    $\fj^+\subset\fj\subset\fg[\![t]\!]$,
    $\fj^++\bC\bone\subset\fj+\bC\bone$ are subalgebras of $\hg$.
    Consider the $\hg$-module $\Vac_{\fj^+}$
    defined in \eqref{eq:Vac_j+}.

    We have
    \[
    \End\Vac_{\fj^+}
    \simeq\Hom_{\fj}(U(\bj),\Ug\otimes_{U(\fj+\bC\bone)}U(\bj))
    \simeq\Vac_{\fj^+}^{J^+}.
    \]
    
    Since $\bj\simeq\fs(1)/\fs(N+1)$ is abelian,
    $U(\bj)\hookrightarrow\End\Vac_{\fj^+}$.
    Recall $\fZ_{\fj^+}=\mathrm{Im}(\fZ\rightarrow\End\Vac_{\fj^+})$.
    Define
    \begin{equation}
    	A':=\bC[S_{i,j}\mid (i,j)\in\tilde{A}_\nu]
    \end{equation}
    where $\tilde{A}_\nu$ is as in \eqref{eq:A_nu tA_nu}.
    
    \begin{prop}\label{p:local quant diagram}
    	The image of $A'\to \fZ_{\fj^+}$ is contained in $U(\bj)$, 
    	so that we have commutative diagram
    	\begin{equation}\label{eq:local quant diagram}
    		\begin{tikzcd}
    			A' \arrow[r] \arrow[d] &\fZ_{\fj^+} \arrow[d,hook]\\
    			U(\overline{\fj}) \arrow[r,hook] &\End(\Vac_{\fj^+}) \arrow[r,hook] &\Vac_{\fj^+}
    		\end{tikzcd}
    	\end{equation}
    \end{prop}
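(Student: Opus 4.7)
The plan is to show that for each $(i,j)\in\tilde{A}_\nu$, the vector $S_{i,j}\cdot 1\in\Vac_{\fj^+}$ lies in the subspace $U(\bj)\cdot 1$; by the identification $\End(\Vac_{\fj^+})\simeq\Vac_{\fj^+}^{J^+}$ and the embedding $U(\bj)\hookrightarrow\End(\Vac_{\fj^+})$ via left multiplication, this is equivalent to the proposition. Fix a PBW decomposition $\hg=\fj^+\oplus\bj\oplus\mathfrak{d}\oplus\bC\bone$ with $\mathfrak{d}$ a vector-space complement to $\fj$ inside $\fg(\!(t)\!)$, giving $\Vac_{\fj^+}\simeq U(\bj\oplus\mathfrak{d})$ graded by the $u$-loop grading of \eqref{eq:g((t)) to g((u))}. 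The generators of $\bj\oplus\mathfrak{d}$ all sit in loop degrees at most $N$: the $\ft_Y$-Cartan generators $A_{a,k}u^a$ of $\bj$ in degrees $[1,N]$, while the $\tau$-generators $X_{O,a}u^a$ of $\mathfrak{d}$ sit in degrees $\leq 0$ together with $[1,\lfloor(N-1)/2\rfloor]$, plus a Lagrangian half of $\tau_{N/2}u^{N/2}$ in the even case.

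By Lemma \ref{l:SS operator}, $S_{i,j}\cdot 1$ is a finite sum of at most $d_i$-fold tensors of total loop degree $D:=m(j-d_i+1)=(d_i-1)N-\ell_{i,j}$. For $(i,j)\in\tilde{A}_\nu$, so $-N\leq\ell_{i,j}\leq -1$, we have $D\in[(d_i-1)N+1,\,d_iN]$. Since every generator of $\bj\oplus\mathfrak{d}$ has loop degree at most $N$ and $D\geq(d_i-1)N+1$, every PBW monomial contributing to $S_{i,j}\cdot 1$ requires at least $d_i$ factors; combined with the upper bound from Lemma \ref{l:SS operator}, exactly $d_i$ factors appear, each necessarily of positive loop degree in $[1,N]$ (the case of a factor of degree $\leq 0$ is ruled out because then the remaining factors would have maximum sum strictly less than $D$). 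A further accounting, using that a factor in $[(N+1)/2,N]$ of $\tau$-type would lie in $\fj^+$, shows that at most one factor can have loop degree strictly less than $\lceil(N+1)/2\rceil$ (resp.\ $N/2+1$ in the even case), and hence at most one factor can be of $\tau$-type from $\mathfrak{d}$.

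The main obstacle and technical heart of the proof is ruling out the remaining possibility of a PBW monomial with exactly one $\tau$-type factor from $\mathfrak{d}$, the other $d_i-1$ factors lying in $\bj$. I plan to exploit the $\ft_Y$-adjoint invariance of $S_{i,j}$: since $S_{i,j}$ is central in $\Ug$, and since $\ft_Y\subset\fs(1)\subset\fj$ via the Moy--Prasad embedding while $\fj^+$ is an ideal of $\fj$ by Lemma \ref{l:Lagrangian}, the $\ft_Y$-adjoint action descends to $\Vac_{\fj^+}$ and fixes $S_{i,j}\cdot 1$. Expanding each orbit-basis generator $X_{O,a}$ of $\tau_a$ as a sum of $\ft_Y$-root vectors $E_{\tilde\alpha}$ over $\tilde\alpha$ in the $\Theta$-orbit of $\alpha_O$, each $\bj$-factor has $\ft_Y$-weight zero while each $\tau$-factor carries a nonzero $\ft_Y$-root; hence any monomial containing exactly one $\tau$-factor has nonzero $\ft_Y$-weight and cannot contribute to the $\ft_Y$-invariant element $S_{i,j}\cdot 1$. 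The subtlety I expect to address is that $\ft_Y$ as constants in $\fg$ spreads across several $u$-degrees under $\psi$, so the $\ft_Y$-adjoint action mixes loop degrees and does not give a simple weight decomposition on a fixed $u$-graded piece; I plan to resolve this by first working at the classical limit in the associated graded $\Sym(\bj\oplus\mathfrak{d})$, where the $\fg(\!(t)\!)$-invariance of the symbol $\sigma(S_{i,j})$ provides the needed $\ft_Y$-invariance constraint compatibly with the loop grading on symmetric monomials, and then lifting the conclusion back to the quantum level through the PBW filtration.
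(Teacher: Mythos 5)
Your proposal is correct and follows essentially the same route as the paper: the degree count from Lemma \ref{l:SS operator} forces exactly $d_i$ factors, all of loop degree in $[1,N]$ with at most one low-degree $\tau$-type factor, and that last factor is killed by the invariance of $S_{i,j}$ under the Cartan together with $\tau^{\ft_Y}=0$; the paper carries out this final step precisely in the associated graded $\gr_{d_i}\Ug$, which is the ``classical limit'' you propose, and the length-$<d_i$ corrections are harmless because the same degree count shows they annihilate $1$. One small repair: the constant Cartan $\ft_Y$ is not contained in $\fs(1)$ (under $\psi$ it is not $u$-homogeneous), so rather than an adjoint action descending to $\Vac_{\fj^+}$ you should use, as the paper does, the commutators $[\ft_{Y,d}u^d,S_{i,j}]=0$ with the graded pieces of the loop Cartan $\fs$ for all $d$ (these span $\ft_Y$ modulo $u$ since $\ft_{Y,0}=0$), which is exactly what makes the weight argument compatible with the loop grading.
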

    \begin{proof}
    	The action of central operators $S_{i,j}$
    	on $\Vac_{\fj^+}$
    	is determined by their actions on $1\in\Vac_{\fj^+}$.
    	By Lemma \ref{l:SS operator},
    	$S_{i,j}\cdot 1$
    	can be written as a finite linear combination of tensors
    	\[
    	X_{a_1}\otimes X_{a_2}\otimes\cdots\otimes X_{a_k},
    	\]
    	where $k\leq d_i$, $X_{a_r}\in\fg_{a_r}u^{a_r}\cap\prod$, $\sum_{r=1}^ka_r=m(j-d_i+1)$,
    	$a_1\leq a_2\leq\cdots\leq a_k$.
    	
    	For $(i,j)\in\tilde{A}_\nu$, 
    	\[
    	(d_i-1)N+1\leq\sum_{r=1}^k a_r=m(j-d_i+1)=(d_i-1)N-\ell_{i,j}\leq d_iN.
    	\]
    	
    	Note that for $a\geq N+1$, $\fg_au^a\subset\fp(N+1)\subset\fj^+$
    	annihilates $1\in\Vac_{\fj^+}$.
    	Thus if $a_k\geq N+1$, the tensor has zero coefficient in $S_{i,j}\cdot 1$.
    	We can assume $a_1\leq\cdots\leq a_k\leq N$.
    	Then
    	\[
    	a_1\geq\sum_{r=1}^ka_r-(k-1)N\geq(d_i-1)N+1-(d_i-1)N=1.
    	\] 
    	
    	Suppose $k\leq d_i-1$.
    	Then
    	\[
    	a_1\geq\sum_{r=1}^ka_r-(k-1)N\geq(d_i-1)N+1-(d_i-2)N
    	=N+1,
    	\]
    	contradicting with assumption $a_1\leq a_k\leq N$.
    	So $k=d_i$.
    	
    	Assume 
    	\[
    	1\leq a_1\leq a_2\leq\cdots\leq a_p\leq\frac{N}{2}
    	<\frac{N+1}{2}\leq a_{p+1}\leq\cdots\leq a_{d_i}.
    	\]
    	
    	If $p=0$, then all $a_r>\frac{N}{2}$, 
    	$X_{a_r}\in\fg_{a_r}u^{a_r}\subset\fj^+$.
    	The tensor is contained in $U(\bj)$.
    	
    	If $p>0$, we have inequality
    	\[
    	(d_i-1)N+1\leq\sum_{r=1}^{d_i}a_r\leq p\lfloor\frac{N}{2}\rfloor+(d_i-p)N.
    	\]
    	We get
    	\[
    	p\leq\frac{N-1}{N-\lfloor\frac{N}{2}\rfloor}
    	=\begin{cases}
    		2\frac{N-1}{N+1},\quad N\text{ odd}\\
    		2\frac{N-1}{N},\quad N\text{ even}
    	\end{cases}
        <2.
    	\]
    	We obtain $p=1$ and $a_2,...,a_{d_i}\geq\frac{N+1}{2}$,
    	$X_{a_r}\in\fj$ for $r\geq 2$.
    	
    	If $X_{a_{d_i}}\in\tau_{a_{d_i}}u^{a_{d_i}}$,
    	then the tensor acts as $0$ on $1$.
    	Thus we can assume $X_{a_{d_i}}\in\ft_{Y,a_{d_i}}u^{a_{d_i}}$.
    	We show $X_{a_r}\in\ft_{Y,a_r}u^{a_r}$ for all $2\leq r\leq d_i$
    	by induction.
    	Given $2\leq q<d_i$,
    	suppose $X_{a_r}\in\ft_{Y,a_r}u^{a_r}$ for all $q<r\leq d_i$.
    	If $X_{a_q}\in\tau_{a_q}u^{a_q}$,
    	we claim the tensor acts as $0$ on $1$.
    	In fact,
    	we can move $X_{a_q}$ to the right most term,
    	after which the tensor acts as $0$ on $1$.
    	In this process, 
    	two kinds of shorter tensors are produced by \eqref{eq:hg_c Lie bracket}:
    	replacing two adjacent terms in the tensor 
    	either by a multiple of unit $\bone$, which gives a length $d_i-2$ tensor,
    	or by the Lie bracket of the two terms, which gives a length $d_i-1$ tensor.
    	However, since the Killing form between $\ft_Y$ and $\tau$ vanishes,
    	only the second case can contribute nonzero tensors,
    	where the sum $\sum_{r=1}^k a_r$ is the same as that of the original tensor.
    	We can repeat this process to rewrite these shorter tensors
    	as a linear combination of basis in $\prod$.
    	From the procedure, these tensors all satisfy the conditions on
    	$X_{a_1}\otimes X_{a_2}\otimes\cdots\otimes X_{a_k}$,
    	and moreover $k<d_i$.
    	From previous discussion, we know such shorter tensors all act as $0$ on $1$.
    	We conclude that $X_{a_r}\in\ft_{Y,a_r}u^{a_r}$ for all $2\leq r\leq d_i$.
    	
    	It remains to show $X_{a_1}\in\ft_{Y,a_1}u^{a_1}$.
    	Suppose $X_{a_1}\in\tau_{a_1}u^{a_1}$ instead.
    	Note that the central element $S_{i,j}$
    	commutes with (the preimage in $\Ug$ of) $\ft_{Y,d}u^d$ for all $d$.
    	Let $\Ug_{\leq e}$ be the subspace of tensors in $\Ug$
    	with length $\leq e$,
    	and $\gr_e\Ug:=\Ug_{\leq e}/\Ug_{\leq e+1}$.
    	Note that $\gr_e\Ug$ has a basis by images of tensors 
    	as in Lemma \ref{l:SS operator}
    	with $k=e$.
    	Denote by $\gr_e^d\Ug$
    	the subspace of $\gr_e\Ug$
    	generated by tensors satisfying $\sum_{r=1}^ka_r=d$,
    	and denote its basis as above by $C(e,d)$.
    	The action of $\ft_{Y,d}u^d$ sends 
    	$\gr_{d_i}^{m(j-d_i+1)}\Ug$ to $\gr_{d_i}^{m(j-d_i+1+d)}\Ug$.
    	Moreover, this action sends the subspace of $\gr_{d_i}^{m(j-d_i+1)}\Ug$
        spanned by
        \[
        \tau_{a_1}u^{a_1}\otimes X_{a_2}\otimes\cdots\otimes X_{a_{d_i}}
        \]
        into the subspace spanned by
        \[
        \tau_{a_1+d}u^{a_1+d}\otimes X_{a_2}\otimes\cdots\otimes X_{a_{d_i}}.
        \]
        It also sends those basis in $C(d_i,m(j-d_i+1))$ not contained in 
        $\tau_{a_1}u^{a_1}\otimes X_{a_2}\otimes\cdots\otimes X_{a_{d_i}}$
        into the subspace generated by those basis in $C(d_i,m(j-d_i+1+d))$ 
        not contained in
        $\tau_{a_1+d}u^{a_1+d}\otimes X_{a_2}\otimes\cdots\otimes X_{a_{d_i}}$.
        
        Let $Zu^{a_1}\otimes X_{a_2}\otimes\cdots\otimes X_{a_{d_i}}$,
        $Z\in\tau_{a_1}$
        be the component of the image of $S_{i,j}$ in $\gr_{d_i}^{m(j-d_i+1)}\Ug$
        inside $\tau_{a_1}u^{a_1}\otimes X_{a_2}\otimes\cdots\otimes X_{a_{d_i}}$
        in terms of the fixed basis.
        From the above discussion together with 
        $[\ft_{Y,d}u^d,S_{i,j}]=0$, 
        we obtain
        \[
        [\ft_{Y,d}u^d,Zu^{a_1}]=0,\quad \forall d.
        \]
        Thus $[\ft_{Y,d},Z]=0,\forall d$, so that $[\ft_Y,Z]=0$.
        Since $Z\in\tau$, we conclude $Z=0$.
        This completes the proof.
    \end{proof}

    \subsection{Local Hitchin image at level $\fj^+$}\label{ss:local Hit}\mbox{}
    
    We prepare some results on local Hitchin images demanded 
    for a more detailed study of \eqref{eq:local quant diagram}.
    
    \subsubsection{}
    We first fix a Kostant section of $\fg$ as follows.
    Recall in \S\ref{ss:SS operators}
    we have fixed a principal $\mathfrak{sl}_2$-triple $\{p_{-1},2\crho_{\LG},p_1\}$
    in $\Lg$ together with a basis of Kostant section
    $p_{-1}+\Lg^{p_1}=p_{-1}+\sum_{i=1}^n\bC p_i$.
    Consider the dual principal $\mathfrak{sl}_2$-triple
    $\{p_{-1},2\crho_G,p_1\}$ in $\fg$ where we abuse of notations.
    We have isomorphisms
    \[
    p_{-1}+\Lg^{p_1}\simeq\Lg/\!\!/\LG\simeq\ct/\!\!/W
    \simeq\ft^*/\!\!/W\simeq\ft/\!\!/W
    \simeq\fg/\!\!/G
    \simeq p_{-1}+\fg^{p_1}
    \]
    where $\ft\simeq\ft^*$ is induced from Killing form.
    The above morphisms are compatible with $\bGm$-action,
    where on $p_{-1}+\Lg^{p_1}$ the action is given by $\crho_{\LG}$ on $\Lg^{p_1}$
    and on $p_{-1}+\fg^{p_1}$ the action is given by $\crho_G$ on $\fg^{p_1}$.
    The weight subspaces of $\Lg^{p_1}$ are isomorphic to those of $\fg^{p_1}$,
    and the homogeneous basis $p_i$ of $\Lg^{p_1}$
    are sent to a homogeneous basis of $\fg^{p_1}$
    which we still denote by $p_i$.
    
    Consider the local Hitchin map 
    \begin{equation}\label{eq:local Hithin map}
    	h^{cl}:\fg^*(\!(t)\!)\td t\simeq\fg(\!(t)\!)\td t
    	\ra\Hit(D^\times)\simeq\bigoplus_{i=1}^n\bC(\!(t)\!)(\td t)^{d_i},
    \end{equation}
    where the last isomorphism is via invariant polynomials defined by 
    Kostant section $p_{-1}+\sum_i\bC p_i\simeq\fg/\!/G$. 
    Let $h_{i,j}$ be the coordinate of $\bC t^{-j-1}(\td t)^{d_i}$.
    Under the isomorphism $\gr\fZ\simeq\Fun\Hit(D^\times)$,
    the image of $S_{i,j}$ is $h_{i,j}$.

    \subsubsection{}
    Let $\fj^{+,\perp}\subset\fg^*(\!(t)\!)\td t$ 
    be the $\cO$-lattice that is orthogonal to $\fj^+$,
    i.e.
    \[
    \fj^{+,\perp}=
    \{X\td t\in\fg^*(\!(t)\!)\td t\mid
    \Res\langle X,Y\rangle\td t=0,\ \forall\ Y\in\fj^+\}.
    \]
    With respect to the filtration induced from $\Ug$,
    $\gr\Vac_{\fj^+}\simeq\Fun\fj^{+,\perp}$.
    Identifying $\fg^*\simeq\fg$ using Killing form
    and applying $\psi$,  we have
    \begin{equation}\label{eq:j^+,perp}
    	\fj^{+,\perp}\simeq
    	\begin{cases}
    		(\bigoplus_{-N\leq i\leq-\frac{N+1}{2}}\ft_{Y,i}u^i
    		\oplus\prod_{i\geq-\frac{N-1}{2}}\fg_iu^i)\frac{\td u}{u},\hspace{2.7cm}
    		N\text{ odd,}\\[0.2cm]
    		
    		(\bigoplus_{-N\leq i\leq-\frac{N}{2}-1}\ft_{Y,i}u^i
    		\oplus\fm^\perp u^{-\frac{N}{2}}\oplus\prod_{i\geq-\frac{N}{2}+1}\fg_iu^i)\frac{\td u}{u},\qquad
    		N\text{ even,}
    	\end{cases}
    \end{equation}
    where $\ft_{Y,-\frac{N}{2}}\subset\fm^\perp\subset\fg_{-\frac{N}{2}}$ 
    is the orthogonal complement of $\fm$
    with respect to Killing form.
    
    Similarly define lattice $\fj^\perp$ with isomorphism
    \begin{equation}
    	\fj^\perp\simeq
    	\begin{cases}
    		(\bigoplus_{-\frac{N-1}{2}\leq i\leq-1}\tau_iu^i\oplus
    		\prod_{i\geq0}\fg_iu^i)\frac{\td u}{u},\hspace{4.1cm}
    		N\text{ odd,}\\[0.2cm]
    		
    		((\tau_{-\frac{N}{2}}\cap\fm^\perp)u^{-\frac{N}{2}}\oplus
    		\bigoplus_{-\frac{N}{2}+1\leq i\leq-1}\tau_iu^i\oplus
    		\prod_{i\geq0}\fg_iu^i)\frac{\td u}{u},\qquad
    		N\text{ even.}
    	\end{cases}
    \end{equation}
    Note that 
    $\fj^{+,\perp}/\fj^\perp
    \simeq(\bigoplus_{-N\leq i\leq -1}\ft_{Y,i}u^i)\frac{\td u}{u}
    \simeq\bj^*$.
    
    Denote by $\Hit(D)_{\fj^+}$ the closure of the image
    $h^{cl}(\fj^{+,\perp})$,
    then $\gr\fZ_{\fj^+}\simeq\Fun\Hit(D)_{\fj^+}$.
    
    Define
    \begin{equation}
    	\Hit(D)_{\fj^+}'
    	:=\bigoplus_{i=1}^nt^{-d_i-\lfloor\frac{d_iN}{m}\rfloor)}\bC[\![t]\!](\td t)^{d_i}.
    \end{equation}
    
    \begin{prop}\label{p:local Hitchin image}
    	$\Hit(D)_{\fj^+}=\Hit(D)_{\fj^+}'$.
    \end{prop}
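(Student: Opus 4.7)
The plan is to prove both inclusions separately. The forward inclusion $\Hit(D)_{\fj^+}\subset\Hit(D)_{\fj^+}'$ is essentially an order estimate, while the reverse requires a surjectivity/density argument for the restricted Hitchin map.

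First, for $\Hit(D)_{\fj^+}\subset\Hit(D)_{\fj^+}'$, I would take $X\in\fj^{+,\perp}$ and transfer to the $u$-coordinate via $\psi$ and $\frac{\td u}{u}=\frac{1}{m}\frac{\td t}{t}$, writing $X=(mt)^{-1}\widetilde X\,\td t$ where by \eqref{eq:j^+,perp} the element $\widetilde X\in\fg(E)$ has $u$-order at least $-N$. For each invariant polynomial $P_i$ of degree $d_i$, one has $P_i(X)=(mt)^{-d_i}P_i(\widetilde X)$, and $P_i(\widetilde X)$ has $u$-order at least $-Nd_i$. Because $P_i$ is $\Theta$-invariant ($\Theta$ being inner) and $\widetilde X$ is $\sigma$-invariant, $P_i(\widetilde X)$ is invariant under $u\mapsto\zeta_m^{-1}u$, hence lies in $\bC(\!(t)\!)$ with $t$-order at least $\lceil -Nd_i/m\rceil=-\lfloor Nd_i/m\rfloor$. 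Dividing by $t^{d_i}$ yields the desired bound $-d_i-\lfloor Nd_i/m\rfloor$ on the $t$-order of $P_i(X)$.

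For the reverse inclusion, it suffices to show that the restrictions of the coordinates $h_{i,j}$ (for $j\le d_i-1+\lfloor d_iN/m\rfloor$) to $\fj^{+,\perp}$ are algebraically independent, equivalently that no nonzero function on $\Hit(D)_{\fj^+}'$ vanishes identically on the image $h^{cl}(\fj^{+,\perp})$. I would fix a base point $X_0=Yu^{-N}\frac{\td u}{u}\in\fj^{+,\perp}$ with $Y\in\ft_{Y,-N}^{\rs}$, whose existence follows from \cite[Lemma 2.5]{JYDeligneSimpson} together with the fact that $m$ is a regular elliptic number. Analyzing the family $X_0+V$ with $V$ ranging over $\fj^{+,\perp}$, the differential $dP_i|_{X_0}(V)=\langle\nabla P_i(X_0),V\rangle$ uses the gradient $\nabla P_i(X_0)$. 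Kostant's theorem for the regular semisimple $Y$ gives that $\nabla P_1(Y),\ldots,\nabla P_n(Y)$ span $\ft_Y$; combined with the $\crho$-weight grading (so $\nabla P_i$ has weight $d_i-1$), these differentials land in prescribed graded pieces of $\ft_Y$ determined precisely by $\ell_{i,j}$ in \eqref{eq:ell_ij}. Pairing with the directions $\ft_{Y,k}u^k\frac{\td u}{u}\subset\fj^{+,\perp}$ produces explicit contributions to the coefficients $h_{i,j}$ of $h^{cl}(X_0+V)$, and the dimension count in Lemma \ref{l:dim match conn-oper}—which asserts $|A_{\nu,\ell}|=\dim\ct_{X,\ell}$—ensures that these contributions exactly match the degrees of freedom available in $\Hit(D)_{\fj^+}'$.

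The main obstacle is the bookkeeping in the reverse inclusion, particularly handling the indices $i$ with $m\nmid d_i$. For such $i$, $P_i(Y)=0$ by the grading consideration, so the top-order contribution to $h_{i,j}$ cannot come from the linear term in $V$ alone but must arise from mixed products of $Yu^{-N}$ with higher-order components of $V$. The expectation is that the extra flexibility in $\fj^{+,\perp}$—namely the $\fg_i u^i$ components allowed for $i\ge-\frac{N-1}{2}$ (or $i\ge-\frac{N}{2}+1$, together with the Lagrangian piece $\fm^\perp u^{-N/2}$, when $N$ is even)—provides exactly enough freedom for a nonlinear analysis to produce these contributions, with the precise matching again controlled by Lemma \ref{l:dim match conn-oper}. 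Once surjectivity of the differential onto each graded quotient $\Hit(D)_{\fj^+}'/t^K\Hit(D)_{\fj^+}'$ is established at $X_0$ for every $K$, the Zariski density of the image follows by the standard submersion argument applied level by level, completing the proof.
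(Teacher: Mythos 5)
Your forward inclusion is fine, and is in effect a self\-contained proof of the input that the paper simply imports: the paper notes $\fj^{+,\perp}\subset\fp(N+1)^\perp$ and cites \cite[Proposition 10]{Zhu} for $h^{cl}(\fp(N+1)^\perp)\subset\Hit(D)_{\fj^+}'$. For the reverse inclusion your route is genuinely different from the paper's. The paper argues backwards: it takes an arbitrary point of $\Hit(D)_{\fj^+}'$, writes down its Kostant-section representative \eqref{eq:Kostant section conn}, observes that for generic leading coefficients the leading term $Z\in\fg_{-N}$ of that representative is regular semisimple, conjugates $Z$ into the Cartan subspace $\ft_{Y,-N}$, and then conjugates the entire form into $\prod_{i\geq -N}\ft_{Y,i}u^i\frac{\td u}{u}\subset\fj^{+,\perp}$ by the recursive centering procedure of Lemma \ref{l:isoclinic can remove regular part}. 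This exhibits an actual preimage of every generic point and requires no differential computation; your tangent-space argument, by contrast, would also yield the \'etale local structure near the base point, which is more than the proposition needs.

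As written, however, your reverse inclusion has a genuine gap: the surjectivity of the truncated differential for the indices $i$ with $m\nmid d_i$ is exactly the content of the inclusion, and you defer it to an unexecuted ``nonlinear analysis,'' stating only an ``expectation.'' The good news is that the obstacle you flag is illusory; it comes from conflating the vanishing of the \emph{value} $P_i(Y)$ with a degeneration of the \emph{gradient} of $P_i$ at $Y$. Let $\td P_i|_Y\in\ft_Y$ denote that gradient (via the Killing form); by Kostant it is nonzero for every $i$ when $Y$ is regular semisimple, it lies in the graded piece $\ft_{Y,-N(d_i-1)}$, and the $\td P_i|_Y$ with $N(d_i-1)\equiv -a \bmod m$ form a basis of $\ft_{Y,a}$. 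At $X_0=Yu^{-N}\frac{\td u}{u}$ the derivative of the $i$-th Hitchin component in the direction $Zu^{\ell}\frac{\td u}{u}$ is $\langle \td P_i|_Y,Z\rangle\,u^{\ell-N(d_i-1)}(\frac{\td u}{u})^{d_i}$ up to a nonzero constant, and $u$-degree $\ell-N(d_i-1)$ is precisely the coordinate $h_{i,j}$ with $\ell_{i,j}=\ell$. Since $\ft_{Y,\ell}u^{\ell}\subset\fj^{+,\perp}$ for every $\ell\geq -N$ by \eqref{eq:j^+,perp} (including $\ell=-N/2$ in the even case, as $\ft_{Y,-N/2}\subset\fm^\perp$), since $\ft_Y$ pairs to zero with $\tau$, and since the Killing pairing $\ft_{Y,\ell}\times\ft_{Y,-\ell}\to\bC$ is perfect, the differential is block-diagonal with each block $\ft_{Y,\ell}\to\{h_{i,j}\mid \ell_{i,j}=\ell\}$ an isomorphism by the count of Lemma \ref{l:dim match conn-oper}. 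No nonlinear terms are needed. With this observation supplied, your level-by-level dominance argument closes the proof; without it, the argument is incomplete.
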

    \begin{proof}
    	Since $\fj^+\supset\fp(N+1)$, $\fj^{+,\perp}\subset\fp(N+1)^\perp$.
    	By \cite[Proposition 10]{Zhu}, 
    	\begin{equation}
    		h^{cl}(\fp(N+1)^\perp)\subset\Hit(D)_{\fj^+}'.
    	\end{equation}
    	It remains to show the closure of $h^{cl}(\fj^{+,\perp})$ 
    	contains $\Hit(D)_{\fj^+}'$.
    	
    	Since $\Theta$ is $\bZ$-regular elliptic \cite[\S2]{RLYG},
    	we know by \cite[Proposition 8]{RLYG} that
    	$\Ad_{\crho(\zeta_m)}$ and $\Ad_{\crho(\zeta_m^N)}$
    	are conjugated,
    	so are $\Theta=\Ad_{\lambda(\zeta_m)}$
    	are $\Theta^N=\Ad_{\lambda(\zeta_m^N)}$.
    	Thus we can find a $\Theta$-adapted principal $\mathfrak{sl}_2$-triple 
    	$\{e,h,f\}$ with $e\in\fg_N,f\in\fg_{-N},h\in\fg_0$.
    	Let $g_0\in G$ be the unique element up to center
    	that conjugates $\{p_1,2\crho,p_{-1}\}$ to $\{e,h,f\}$.
    	Denote $p_i'=g_0\cdot p_i\in\fg_{(d_i-1)N}$.
    	
    	An element
    	$\sum_{i=1}^n\sum_{j\leq d_i-1+\lfloor\frac{d_iN}{m}\rfloor}h_{i,j}t^{-j-1}(\td t)^{d_i}
    	=\sum_{i=1}^nt^{-\lfloor\frac{d_iN}{m}\rfloor}c_i(t)(\frac{\td t}{t})^{d_i}
    	\in\Hit(D)_{\fj^+}'$,
    	rewritten in terms of $u$,
    	is the image of
    	\[
    	(p_{-1}+\sum_{i=1}^nu^{-d_i-m\lfloor\frac{d_iN}{m}\rfloor}c_i(u^m)p_i)\td u.
    	\]
    	
    	If $N$ is odd, applying conjugation by $g_0$, we get
    	\[
    	(f+\sum_{i=1}^nu^{-d_i-m\lfloor\frac{d_iN}{m}\rfloor}c_i(u^m)p_i')\td u.
    	\]
    	Applying conjugation by $h(u^{\frac{N+1}{2}})$, 
    	we get
    	\begin{equation}\label{eq:Kostant section conn}
    	mu^{-N}(f+
    	\sum_{i=1}^nu^{d_iN-m\lfloor\frac{d_iN}{m}\rfloor}c_i(u^m)p_i')\frac{\td u}{u}
    	\in\prod_{i\geq-N}\fg_iu^i\frac{\td u}{u},
    	\end{equation}
    	where $c_i(t)\in\bC[\![t]\!]$ with
    	$c_i(0)=h_{i,d_i-1+\lfloor\frac{d_iN}{m}\rfloor}$.
    	
    	If $N$ is even, we first apply conjugation by 
    	$\crho(u)\in G_{\mathrm{ad}}(\!(u)\!)$ to 
    	$(p_{-1}+\sum_{i=1}^nu^{-d_i-m\lfloor\frac{d_iN}{m}\rfloor}c_i(u^m)p_i)\td u$,
    	then apply $h(\frac{N}{2})$,
    	we arrive at \eqref{eq:Kostant section conn} as well. 
    	
    	The leading term of \eqref{eq:Kostant section conn} is
    	\begin{equation}\label{eq:Z}
    		Z=m(f+\sum_{m|d_iN}c_i(0)p_i')\in\fg_{-N}.
    	\end{equation}
    	
    	It suffices to show that for generic $c_i(0)$,
    	the form \eqref{eq:Kostant section conn}
    	can be conjugated into $\fj^{+,\perp}$.
    	Since $\fg_{-N}$ contains regular semisimple element $Y$,
    	its regular semisimple locus is open and nonempty,
    	so that we can assume $Z$ is regular semisimple.
    	Observe that $\ft_{Y,-N}$ is a Cartan subspace of $\fg_{-N}$.
    	Thus $Z$ can be conjugated into $\ft_{Y,-N}$.
    	Without loss of generality, 
    	we assume $Z\in\ft_{Y,-N}$.
    	Applying conjugation by $\exp(\tau_ku^k)$ for $k\geq 1$
    	in the same way as in the first step of the proof of 
    	Lemma \ref{l:isoclinic can remove regular part},
    	we conjugate \eqref{eq:Kostant section conn}
    	into $\prod_{i\geq-N}\ft_{Y,i}u^i\frac{\td u}{u}\subset\fj^{+,\perp}$.
    	This completes the proof.
    \end{proof}

    Denote 
    \begin{equation}\label{eq:Hit(D)_bj}
    	\Hit(D)_{\bj}:=\Spec\bC[h_{i,j}\mid-N\leq\ell_{i,j}\leq-1].
    \end{equation}
    Note that $\gr A\simeq\Fun\Hit(D)_{\bj}$.
    We have projection $\Hit(D)_{\fj^+}\twoheadrightarrow\Hit(D)_{\bj}$
    using basis $h_{i,j}$.
    
    \begin{prop}\label{p:local classical diagram}
    	\begin{itemize}
    		\item [(i)]
    		The composition
    		$\fj^{+,\perp}\rightarrow\Hit(D)_{\fj^+}\rightarrow\Hit(D)_{\bj^*}$
    		factors through quotient
    		\[
    		\fj^{+,\perp}\rightarrow\fj^{+,\perp}/\fj^\perp\simeq\bj^*.
    		\]
    		The associated graded of the commutative diagram 
    		\eqref{eq:local quant diagram}
    		is
    		\begin{equation}\label{eq:local classical diagram}
    			\begin{tikzcd}
    				\fj^{+,\perp} \arrow[r] \arrow[d] &\bj^* \arrow[d]\\
    				\Hit(D)_{\fj^+} \arrow[r] &\Hit(D)_{\bj}
    			\end{tikzcd}
    		\end{equation}
    	
    	    \item [(ii)]
    	    Let $\bj^{*,\rs}\subset\bj^*$ be the open locus consisting of elements
    	    whose components in $\ft_{Y,N}^*\simeq\ft_{Y,-N}$ are regular semisimple.
    	    Let $\Hit(D)_{\bj}^\rs\subset\Hit(D)_{\bj}$ be the open locus
    	    whose coefficients $h_{i,j}$ satisfy that 
    	    the vector \eqref{eq:Z}
    	    $Z=m(f+\sum_{\ell_{i,j}=-N}h_{i,j}p_i')$ 
    	    is regular semisimple.
    	    Then $h^{cl}(\bj^{*,\rs})=\Hit(D)_{\bj}^\rs$.
    	    
    	    \item [(iii)]
    	    The restriction 
    	    $h^{cl}:\bj^{*,\rs}\rightarrow\Hit(D)_{\bj}^\rs$
    	    is \'etale
    	    with finite fibers that are isomorphic to the 
    	    free conjugacy classes of 
    	    the regular semisimple coefficient in $\ft_{Y,-N}$
    	    under the finite little Weyl group $W_0:=N_{G_0}(\ft_{Y,-N})/C_{G_0}(\ft_{Y,-N})$.
    	\end{itemize}
    \end{prop}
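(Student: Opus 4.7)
For part (i), the commutative square is obtained by passing to associated gradeds in Proposition \ref{p:local quant diagram}. Since $\gr S_{i,j} = h_{i,j}$ and these are algebraically independent coordinates on $\Hit(D)_{\bj}$, the graded map of $A' \to \fZ_{\fj^+}$ is the coordinate inclusion $\Fun(\Hit(D)_{\bj}) \hookrightarrow \Fun(\Hit(D)_{\fj^+})$. The containment inside $U(\bj)$ supplied by Proposition \ref{p:local quant diagram} yields, upon taking $\gr$, a factorization $\Fun(\Hit(D)_{\bj}) \to \Fun(\bj^*) \hookrightarrow \Fun(\fj^{+,\perp})$; dualizing recovers the diagram in (i).

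For part (ii), I would run the construction from the proof of Proposition \ref{p:local Hitchin image} in reverse. Given $(h_{i,j}) \in \Hit(D)_{\bj}^{\rs}$, that construction produces via the Kostant section an element of $\fj^{+,\perp}$ whose canonical-form leading coefficient is $G$-conjugate to the regular semisimple element $Z = m(f + \sum_{m \mid d_i N} h_{i,j} p_i')$. Conjugating into $\ft_{Y,-N}$ (all Cartan subspaces of $\fg_{-N}$ are $G_0$-conjugate by Vinberg's theorem) and stripping off regular contributions as in Lemma \ref{l:isoclinic can remove regular part} lands the result in $\bj^{*,\rs}$. For the reverse inclusion, for $X \in \bj^{*,\rs}$ with leading coefficient $Y_{-N}$, the only tuple $(Y_{i_1}, \ldots, Y_{i_{d_i}})$ with $\sum i_k = -d_i N$ and each $i_k \in \{-N, \ldots, -1\}$ is the constant one $(Y_{-N},\ldots,Y_{-N})$; this forces the depth-$(-N)$ component of $h^{cl}(X)$ to equal the Chevalley invariants of $Y_{-N}$, so $Z$ is $G$-conjugate to $m Y_{-N}$ and hence regular semisimple.

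For part (iii), $W_0 \subset G_0$ commutes with $\Theta$ and so preserves the grading on $\ft_Y$, acting on $\bj^*$ componentwise; $W_0$-invariance of $h^{cl}$ is automatic from $W_0 \subset G$, and freeness on $\bj^{*,\rs}$ reduces to the well-known freeness on $\ft_{Y,-N}^{\rs}$. The dimension match $\dim \bj^* = \dim \Hit(D)_{\bj}$ follows from applying Panyushev's theorem to the $\Theta$-grading on $\fg$, paralleling Lemma \ref{l:dim match conn-oper}.(i). Surjectivity of $\bj^{*,\rs}/W_0 \to \Hit(D)_{\bj}^{\rs}$ is supplied by (ii); to upgrade this to an isomorphism I would exploit the upper-triangular structure of $h^{cl}|_{\bj^*}$ with respect to depth. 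At depth $-N$ the map is the restriction of the Chevalley map to the Vinberg slice $\ft_{Y,-N}$, a $W_0$-Galois \'etale cover on the regular semisimple locus; at depth $-N+k$ for $1 \leq k \leq N-1$, the coordinate $Y_{-N+k}$ enters the relevant $h_{i,j}$ linearly via the polarization $P_{d_i}(Y_{-N}, \ldots, Y_{-N}, -)$ evaluated at the reg ss $Y_{-N}$, which coincides with the Chevalley differential and is therefore nondegenerate on $\ft_{Y,-N+k}$. Combined with the dimension match at each graded piece, this produces a linear isomorphism at every sub-leading depth, so $h^{cl}: \bj^{*,\rs} \to \Hit(D)_{\bj}^{\rs}$ is \'etale with fibers the free $W_0$-orbits. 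The main obstacle is the careful verification of this upper-triangular/nondegeneracy structure at sub-leading depths, which should follow from the graded Chevalley--Kostant theory for Vinberg gradings.
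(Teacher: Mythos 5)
Your proposal is correct and reaches all three parts; part (ii) essentially follows the paper's route, but for (i) and (iii) you argue differently. For (i) the paper works purely on the classical side: it conjugates a form in $\fj^{+,\perp}$ with regular semisimple leading term into the Kostant section \eqref{eq:Kostant section conn} and observes that the first $N$ coefficients of that section are exactly the projection of the form to $\bj^*$, which gives the factorization directly. You instead take the associated graded of the quantum containment $\mathrm{Im}(A')\subset U(\bj)$ from Proposition \ref{p:local quant diagram}: since $\gr U(\bj)=\Fun\bj^*$ sits inside $\gr\Vac_{\fj^+}=\Fun\fj^{+,\perp}$ as the functions pulled back from $\bj^*$, the symbols $h_{i,j}|_{\fj^{+,\perp}}$ automatically factor through $\bj^*$. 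This is legitimate and non-circular in the paper's logical order, but note that the explicit Kostant-section description which the paper's proof of (i) produces is still needed for your (ii) and (iii), so the computation is not actually avoided. For (iii) the paper outsources the key step (``elements of the fiber are determined by their $\ft_{Y,-N}$-component'') to the proof of Proposition \ref{p:oper fiber over Loc}.(iii), i.e.\ an upper-triangular map between affine spaces of equal dimension whose surjectivity forces the diagonal blocks to be invertible; you instead identify the sub-leading diagonal blocks explicitly as $v\mapsto\langle dP_{d_i}(Y_{-N}),v\rangle$ and claim nondegeneracy from graded Kostant theory. That claim is correct: the $dP_{d_i}(Y_{-N})$ form a basis of $\ft_Y$ adapted to the grading, the Killing form pairs $\ft_{Y,a}$ perfectly with $\ft_{Y,-a}$, and the number of indices $i$ with $d_iN\equiv k\pmod m$ equals $\dim\ft_{Y,-N+k}$ by the same Panyushev computation as in Lemma \ref{l:dim match conn-oper}.(i); your fallback via surjectivity from (ii) plus the dimension match is exactly the paper's softer argument. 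Both routes yield the same conclusion; yours makes the étale structure explicit at the cost of the deferred graded-Kostant verification you flag, while the paper's is shorter because it reuses the oper-side proposition and Vinberg--Panyushev for the $W_0$-orbit description.
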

    \begin{proof}
    	(i):
    	The forms in $\fj^{+,\perp}$ whose leading terms 
    	in $\ft_{Y,-N}u^{-N}\frac{\td u}{u}$
    	are regular semisimple
    	make up an open subspace of $\fj^{+,\perp}$.
    	For such a form $\omega$, 
    	it can be conjugated into Kostant section \eqref{eq:Kostant section conn},
    	where the leading $N$ coefficients 
    	$\prod_{-N\leq i\leq-1}\fg_iu^i\frac{\td u}{u}$ 
    	of \eqref{eq:Kostant section conn}
    	are exactly given by the projection of $\omega$ to $\bj\simeq\bj^*$.
    	This concludes the desired factorization
    	and the commutative diagram.
    	
    	(ii):
    	This is clear from the discussion in part (i).
    	
    	(iii):
    	Let $\phi=(h_{i,j})_{-N\leq\ell_{i,j}\leq-1}\in\Hit(D)_{\bj}^\rs$.
    	As in part (i), 
    	the fiber $h^{cl,-1}(\phi)$
    	consists of elements $\tphi=\sum_{i=1}^N\tphi_iu^i\in\bj$
    	and are conjugated to the Kostant section \eqref{eq:Kostant section conn}
    	of $\phi$.
    	By the same proof as Proposition \ref{p:oper fiber over Loc}.(iii),
    	elements in $h^{cl,-1}(\phi)$ are determined by their components in $\ft_{Y,-N}$.
    	Thus $h^{cl,-1}(\phi)$ is isomorphic to the elements in $\ft_{Y,-N}$
    	that are conjugated to the regular semisimple leading term $Z$ in part (ii).
    	By \cite[Theorem 4.6.(i)]{Panyushev} and \cite[Theorem 2]{Vinberg},
    	$h^{cl,-1}(\phi)$ is isomorphic to $W_0\cdot Z$.
    	Since the action of $W_0$ on $\ft_{Y,N}$
    	is given by Weyl group \cite[Theorem 4.6.(ii)]{Panyushev},
    	it has free action on regular semisimple elements.
    	Note that $W_0$ is finite by \cite[Proposition 8]{Vinberg}.
    	
    	We conclude that $h^{cl}:\bj^{*,\rs}\rightarrow\Hit(D)_{\bj}^\rs$
    	is isomorphic via Kostant section with a free quotient by $W_0$.
    \end{proof}

    \begin{cor}\label{c:A intersection}
    	\begin{itemize}
    		\item [(i)]
    		The maps $A'\rightarrow\fZ_{\fj^+}$ and $A'\rightarrow U(\bj)$
    		are injections.
    		
    		\item [(ii)]
    		The algebra $A'$ is isomorphic to 
    		the intersection of $U(\bj)$ and $\fZ_{\fj^+}$
    		in $\End\Vac_{\fj^+}$,
    		i.e. $A'\simeq A$ in \eqref{eq:A intersection}.
    		
    		\item [(iii)]
    		There is an isomorphism $A\simeq\Fun\Hit(D)_{\bj}$
    		that identifies $\Spec U(\bj)\rightarrow\Spec A$
    		with $\bj\rightarrow\Hit(D)_{\bj}$.
    		
    		\item [(iv)]
    		Let $(\Spec A)^\rs\subset\Spec A$ be the open subscheme
    		of elements whose coefficients $S_{i,j}$
    		correspond under Feigin-Frenkel isomorphism
    		to those coefficients $v_{i,j}$
    		such that $p_{-1}+\sum_{\ell_{i,j}=-N}v_{i,j}p_i$
    		is regular semisimple.
    		Then $(\Spec A)^\rs\simeq\Hit(D)_{\bj}^\rs$
    		under the isomorphism in (iii).
    	\end{itemize}
    \end{cor}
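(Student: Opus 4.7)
My plan is to handle all four parts via the associated graded filtration induced from $\Ug$, reducing everything to the classical picture of Proposition \ref{p:local classical diagram}. First I would establish (i): passing to associated graded, the principal symbol $\gr S_{i,j}$ is the Hitchin coordinate $h_{i,j}$ in $\gr\fZ_{\fj^+}=\Fun\Hit(D)_{\fj^+}$, and by Proposition \ref{p:local Hitchin image} this is a polynomial ring in the free generators $h_{i,j}$ for $j\leq d_i-1+\lfloor d_iN/m\rfloor$. Since $\tilde{A}_\nu$ selects a subset of these index pairs, the corresponding generators remain algebraically independent in $\gr\fZ_{\fj^+}$, whence $A'\hookrightarrow\fZ_{\fj^+}$ is injective; injectivity of $A'\hookrightarrow U(\bj)$ then follows because, by Proposition \ref{p:local quant diagram}, the composite $A'\to U(\bj)\hookrightarrow\End\Vac_{\fj^+}$ coincides with the injection $A'\hookrightarrow\fZ_{\fj^+}\hookrightarrow\End\Vac_{\fj^+}$.

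Granted (ii), parts (iii) and (iv) will be essentially formal. Under the identification $A\simeq A'$, the graded identification $\gr A'=\Fun\Hit(D)_{\bj}$ (which holds by construction, \eqref{eq:Hit(D)_bj}) lifts to $A\simeq\Fun\Hit(D)_{\bj}$, and the identification of $\Spec U(\bj)\to\Spec A$ with $\bj\to\Hit(D)_{\bj}$ is the content of \eqref{eq:local classical diagram}. Part (iv) then matches the two regular semisimple loci via Proposition \ref{p:local classical diagram}.(ii), as both are cut out by the condition that the Kostant-section leading term be regular semisimple.

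The substantive step is (ii). The inclusion $A'\subseteq A$ is Proposition \ref{p:local quant diagram}. For the reverse, I would compare associated gradeds inside $\gr\Vac_{\fj^+}=\Fun\fj^{+,\perp}$:
\[
\gr A \;\subseteq\; \gr U(\bj)\cap\gr\fZ_{\fj^+} \;=\; \Fun\bj^*\cap\Fun\Hit(D)_{\fj^+},
\]
where the two subrings on the right are embedded via pullback along $\fj^{+,\perp}\twoheadrightarrow\bj^*$ and along the classical Hitchin map, respectively. It therefore suffices to prove
\[
\Fun\bj^*\cap\Fun\Hit(D)_{\fj^+} \;=\; \Fun\Hit(D)_{\bj},
\]
since the right-hand side is exactly $\gr A'$. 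The inclusion $\supseteq$ is the commutativity of \eqref{eq:local classical diagram}. For the converse, an element $f$ of the intersection descends to a polynomial $\tilde f$ on $\bj^*$, and I would show $\tilde f|_{\bj^{*,\rs}}$ is $W_0$-invariant: by Proposition \ref{p:local classical diagram}.(iii) this is equivalent to $\tilde f|_{\bj^{*,\rs}}$ factoring through $\Hit(D)_{\bj}^\rs$, and polynomial $W_0$-invariance on a dense open extends automatically to all of $\bj^*$. To verify the $W_0$-invariance, for $W_0$-conjugate points $y,y'\in\bj^{*,\rs}$ I would exhibit lifts $\tilde y,\tilde y'\in\fj^{+,\perp}$ with $h^{cl}(\tilde y)=h^{cl}(\tilde y')$; constancy of $f$ on Hitchin fibers then gives $\tilde f(y)=\tilde f(y')$.

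The main obstacle is producing such a matched pair of lifts, or equivalently verifying that as $\omega$ ranges over $\fj^\perp$, the assignment $\omega\mapsto h^{cl}(y'+\omega)$ sweeps out the entire fiber of $\Hit(D)_{\fj^+}\twoheadrightarrow\Hit(D)_{\bj}$ over $h^{cl}(y')$. The proof of Proposition \ref{p:local Hitchin image} already exhibits a Kostant-section normal form with arbitrary lower coefficients achievable by $J^+$-gauge conjugation, so I expect a parametric refinement of that argument—tracking how the lower Hitchin coefficients of a Kostant-form representative depend on the $\fj^\perp$-component of the lift—to supply the required surjectivity on the regular semisimple locus. With that in hand the associated-graded argument closes, and parts (i), (iii), (iv) follow as indicated.
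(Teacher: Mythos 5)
Your overall strategy---reduce all four parts to associated graded algebras and invoke Propositions \ref{p:local Hitchin image} and \ref{p:local classical diagram}---is exactly the paper's, and your treatments of (i), (iii), (iv) match it (for (iii) you should still justify the lift from $\gr A'$ to $A'$: the paper's reason is that each $S_{i,j}$ acts on $1\in\Vac_{\fj^+}$ purely through its length-$d_i$ component, so $A'$ is canonically identified with its associated graded). Your route through (ii) diverges: the paper never descends to $\bj^*$, but works with an element of $\gr A$ as a polynomial $P$ in \emph{all} the Hitchin coordinates $h_{i,j}$, $\ell_{i,j}\geq -N$, and shows via the explicit Kostant-section computation of $h^{cl}$ that $\fj^\perp$-invariance forces $P$ to be constant on the fibers of the coordinate projection $\Hit(D)_{\fj^+}\to\Hit(D)_{\bj}$ over the regular semisimple locus; since those fibers are affine spaces, $P$ involves only the $h_{i,j}$ with $-N\leq\ell_{i,j}\leq-1$. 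Also, your ``main obstacle'' is not one: matched lifts of $W_0$-conjugate $y,y'\in\bj^{*,\rs}$ are obtained by taking a lift $\tilde y\in\prod_{i\geq-N}\ft_{Y,i}u^i$ of $y$ and setting $\tilde y'=w\cdot\tilde y$ for $w\in N_{G_0}(\ft_{Y,-N})$; this preserves $\prod_{i\geq -N}\ft_{Y,i}u^i\subset\fj^{+,\perp}$ and the Hitchin map is conjugation-invariant, so no fiber-sweeping surjectivity is needed.

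The genuine loose end is your final step in (ii). Knowing that $\tilde f$ is a $W_0$-invariant polynomial on $\bj^*$ does not yield $\tilde f\in\Fun\Hit(D)_{\bj}$: that would require $\Fun(\bj^*)^{W_0}=\bC[h_{i,j}]_{-N\leq\ell_{i,j}\leq-1}$, i.e., that the \emph{diagonal} $W_0$-invariants of $\bigoplus_{1\leq i\leq N}\ft_{Y,i}u^i$ form a polynomial ring freely generated by the Hitchin coordinates. This is a nontrivial Chevalley-restriction-type assertion you have not established; what Proposition \ref{p:local classical diagram} gives is only that $\bj^*/W_0\to\Hit(D)_{\bj}$ is a generically \'etale, birational map of varieties of the same dimension, which does not identify the function rings. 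The fix is to use the information you set aside: $f$ already lies in $\gr\fZ_{\fj^+}=\Fun\Hit(D)_{\fj^+}$, hence is a polynomial in the $h_{i,j}$, and the $W_0$-invariance you prove (equivalently, constancy of $f$ on the fibers of $\fj^{+,\perp,\rs}\to\Hit(D)_{\bj}^{\rs}$) shows this polynomial is constant on a dense open subset of each affine-space fiber of $\Hit(D)_{\fj^+}\to\Hit(D)_{\bj}$, hence lies in $\gr A'$. With that replacement your argument closes and agrees with the paper's.
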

    \begin{proof}
    	(i):
    	The injectivity of $A'\rightarrow\fZ_{\fj^+}$ and $A'\rightarrow U(\bj)$
    	follows from the surjectivity of 
    	$\Hit(D)_{\fj^+}\rightarrow\Hit(D)_{\bj}$
    	and $\bj\rightarrow\Hit(D)_{\bj}$
    	of the $\Spec$ of associated graded algebras.
    	
    	(ii):
        It suffices to prove for associated graded algebras,
        i.e. any regular function on $\fj^{+,\perp}$
        that factors through both $\Hit(D)_{\fj^+}$ and $\bj$
        must factor through $\Hit(D)_{\bj}$.
        Moreover, it suffices to prove over the open locus of $\fj_0^{+,\perp}$
        with regular semisimple leading term.
        
        Indeed, let $f$ be a regular function on $\fj_0^{+,\perp}$
        that factors through the local Hitchin map
        and is constant over cosets of $\fj^\perp$.
        The local Hitchin map on $\fj_0^{+,\perp}$ 
        can be computed by
        first conjugate the form into $\prod_{i\geq-N}\ft_{Y,i}u^i\frac{\td u}{u}$,
        next conjugate it into Kostant section \eqref{eq:Kostant section conn},
        then just take the coefficients of Kostant section 
        to be the values of $h_{i,j}$.
        By $\fj^\perp$-invariance, 
        the values of $f$ depend on the first $N$ leading terms 
        of the Kostant section.
        Thus $f$ factors through $\Hit(D)_{\bj}$. 
        
        (iii):
        We just need to show that $A$ is isomorphic to its associated graded algebra.
        In fact, we have seen in the proof of Proposition \ref{p:local quant diagram}
        that the action of $S_{i,j}$ on $1\in\Vac_{\fj^+}$
        factors through the projection to the length $d_i$ tensors in $S_{i,j}$.
        Since the action gives an embedding of $A$ by (i),
        we get $A\simeq\gr A$ as algebras.
        
        (iv):
        Recall the Kostant sections in $\Lg$ and $\fg$
        are identified via $\Lg/\!\!/\LG\simeq\fg^*/\!\!/G\simeq\fg/\!\!/G$.
        Also note that the tuples $(f,p_i')$ and $(p_{-1},p_i)$ are conjugated.
        The statement follows from Proposition \ref{p:local classical diagram}.(ii).
    \end{proof}

    \subsection{Central support of $\Vac_{\fj^+}$}\mbox{}
    
    Let $\Op_{\Lg}(D)_{\leq\nu}$ be the space of opers on $D^\times$
    with slope bounded above by $\nu=\frac{N}{m}$.
    By the oper slope formula \eqref{eq:oper slope},
    \[
    \Fun\Op_{\Lg}(D)_{\leq\nu}=\bC[S_{i,j}\mid \ell_{i,j}\geq-N].
    \]
    Denote $\Op(D)_{\fj^+}=\Spec\fZ_{\fj^+}\hookrightarrow\Op_{\Lg}(D^\times)$.
    
    \begin{lem}\label{l:Op_j+=Op_=<nu}
    	$\Op_{\Lg}(D)_{\fj^+}=\Op_{\Lg}(D)_{\leq\nu}$.
    \end{lem}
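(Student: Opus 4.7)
The plan is to prove the two inclusions separately and then deduce that the induced map between filtered algebras is in fact an isomorphism. First I would establish $\Op_{\Lg}(D)_{\fj^+}\subseteq\Op_{\Lg}(D)_{\leq\nu}$, which, via $\Fun\Op_{\Lg}(D)_{\leq\nu}\simeq\bC[S_{i,j}\mid\ell_{i,j}\geq-N]$, reduces to showing that $S_{i,j}$ annihilates $1\in\Vac_{\fj^+}$ whenever $\ell_{i,j}<-N$. By Lemma \ref{l:SS operator}, $S_{i,j}\cdot 1$ is a finite sum of ordered tensors $X_{a_1}\otimes\cdots\otimes X_{a_k}$ with $k\leq d_i$, $a_1\leq\cdots\leq a_k$, and $\sum_r a_r=m(j-d_i+1)$. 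The identity $m(j-d_i+1)=(d_i-1)N-\ell_{i,j}$ then gives $\sum_r a_r>d_iN\geq kN$, forcing $a_k\geq N+1$, so $X_{a_k}\in\fp(N+1)\subseteq\fj^+$ annihilates $1\in\Vac_{\fj^+}$; hence the whole tensor does. Consequently the surjection $\fZ\twoheadrightarrow\fZ_{\fj^+}$ factors through $\Fun\Op_{\Lg}(D)_{\leq\nu}$.

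Next, to upgrade this surjection to the desired equality, I would pass to associated graded algebras. By Proposition \ref{p:local Hitchin image}, $\gr\fZ_{\fj^+}\simeq\Fun\Hit(D)_{\fj^+}'$, and unpacking the definition of $\Hit(D)_{\fj^+}'$ gives coordinates $h_{i,j}$ with $-j-1\geq-d_i-\lfloor d_iN/m\rfloor$; since $j$ is an integer, this is equivalent to $\ell_{i,j}\geq-N$, identifying $\gr\fZ_{\fj^+}$ with $\bC[h_{i,j}\mid\ell_{i,j}\geq-N]$. On the other hand, $\fZ\simeq\bC[S_{i,j}]$ is a topological polynomial algebra with $\gr\fZ\simeq\bC[h_{i,j}]$, so quotienting by the ideal generated by the $S_{i,j}$ with $\ell_{i,j}<-N$ yields an associated graded isomorphic to $\bC[h_{i,j}\mid\ell_{i,j}\geq-N]$ as well. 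The induced graded map sends $h_{i,j}$ to $h_{i,j}$ and is therefore an isomorphism, and a standard completeness/separation argument for the natural filtration lifts this to an isomorphism $\Fun\Op_{\Lg}(D)_{\leq\nu}\simeq\fZ_{\fj^+}$.

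The crux of the argument is Proposition \ref{p:local Hitchin image}, which has already been proved; the remaining work is the index-level bookkeeping that matches the condition $\ell_{i,j}\geq-N$ coming from the slope formula with the condition $-j-1\geq-d_i-\lfloor d_iN/m\rfloor$ describing $\Hit(D)_{\fj^+}'$, together with the PBW estimate that makes the $S_{i,j}$ with $\ell_{i,j}<-N$ vanish on $\Vac_{\fj^+}$. The most delicate point is verifying the associated graded identification $\gr\Fun\Op_{\Lg}(D)_{\leq\nu}\simeq\bC[h_{i,j}\mid\ell_{i,j}\geq-N]$ cleanly from the polynomial structure on $\fZ$ and its filtration; once this is in hand, the equality of two polynomial algebras on matching generators makes the final identification automatic.
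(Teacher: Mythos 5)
Your proposal is correct and follows essentially the same route as the paper: first the PBW estimate $\sum_r a_r=(d_i-1)N-\ell_{i,j}>d_iN$ forces $a_k>N$ so that $S_{i,j}\cdot 1=0$ for $\ell_{i,j}<-N$, and then Proposition \ref{p:local Hitchin image} identifies the associated graded of the resulting surjection $\Fun\Op_{\Lg}(D)_{\leq\nu}\twoheadrightarrow\fZ_{\fj^+}$ as an isomorphism. The index bookkeeping matching $\ell_{i,j}\geq-N$ with $j\leq d_i-1+\lfloor d_iN/m\rfloor$ is exactly what the paper's appeal to $\Fun\Hit(D)_{\fj^+}'$ amounts to.
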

    \begin{proof}
    	For any $S_{i,j}$ with $\ell_{i,j}<-N$,
    	write $S_{i,j}$ as linear combination of tensors as in 
    	the proof of Proposition \ref{p:local quant diagram}.
    	We have
    	\[
    	\sum_{r=1}^ka_r=m(j-d_i+1)=(d_i-1)N-\ell_{i,j}>d_iN.
    	\]
    	Since $k\leq d_i$, $a_1\leq\cdots\leq a_k$,
    	we have $a_k\geq\frac{\sum_{r=1}^ka_r}{d_i}>N$.
    	Therefore $S_{i,j}\cdot 1=0$ in $\Vac_{\fj^+}$.
    	The action of $\fZ$ on $\Vac_{\fj^+}$ factors as
    	\[
    	\fZ\rightarrow\Fun\Op_{\Lg}(D)_{\leq\nu}
    	\twoheadrightarrow\fZ_{\fj^+}\hookrightarrow\End\Vac_{\fj^+}.
    	\]
    	
    	By Proposition \ref{p:local Hitchin image},
    	the associated graded of the middle surjection in the above
    	is an isomorphism:
    	\[
    	\gr\Fun\Op_{\Lg}(D)_{\leq\nu}=\Fun\Hit(D)_{\fj^+}'
    	=\Fun\Hit(D)_{\fj^+}=\gr\fZ_{\fj^+}.
    	\]
    	The lemma follows.
    \end{proof}

	\section{Isoclinic local geometric Langlands}\label{s:isoclinic local Langlands}
	We construct a correspondence between 
	irreducible isoclinic formal connections
	and 
	toral supercuspidal representations
	in the sense of Frenkel-Gaitsgory \cite{FGLocal}.
	
    \subsection{Matching canonical forms with $K$-types}\mbox{}
    
    We first match the irreducible isoclinic formal connections
    with pairs $(J,\tphi)$.
    
    \subsubsection{}
    Firstly, the matching of the 
    leading terms of the isoclinic formal connection 
    and $\tphi$
    are given by \cite[Lemma 3]{CYTheta}.
    Explicitly,
    recall $X\in\Lg_{-N}$ and $Y\in\fg_N^*$ 
    are regular semisimple elements.
    Denote by $\LG_0$(resp. $G_0$)
    the subgroup of $\LG$(resp. $G$) with Lie algebra $\Lg_0$(resp. $\fg_0$).
    \begin{lem}
    	There exists an isomorphism 
    	\begin{equation}
    		\fg_N^*/\!\!/G_0\simeq\Lg_{-N}/\!\!/\LG_0
    	\end{equation}
        that restricts to a bijection between regular semisimple conjugacy classes
        \begin{equation}\label{eq:match rs leading terms}
        	\fg_N^{*,\rs}/G_0\simeq\Lg_{-N}^{\rs}/\LG_0.
        \end{equation}
    \end{lem}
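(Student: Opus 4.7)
The plan is to use Vinberg's theory of graded Lie algebras to reduce each side to a quotient of a Cartan subspace by a little Weyl group, and then identify these combinatorially via Langlands duality.

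First I would use the Killing form to identify $\fg_N^*\simeq\fg_{-N}$ as $G_0$-modules, which reduces the claim to an isomorphism $\fg_{-N}/\!\!/G_0\simeq\Lg_{-N}/\!\!/\LG_0$ restricting to regular semisimple loci. Applying Vinberg's theorem on periodic automorphisms together with Panyushev's refinement already cited in the paper, the subspace $\ft_{Y,-N}=\ft\cap\fg_{-N}$ is a Cartan subspace of $\fg_{-N}$ and
\begin{equation*}
\fg_{-N}/\!\!/G_0\simeq\ft_{Y,-N}/W_0,\qquad W_0=N_{G_0}(\ft_{Y,-N})/C_{G_0}(\ft_{Y,-N}).
\end{equation*}
Exactly the same argument on the dual side produces a Cartan subspace $\ct\cap\Lg_{-N}$ and an isomorphism $\Lg_{-N}/\!\!/\LG_0\simeq(\ct\cap\Lg_{-N})/W_0'$ with a dual little Weyl group $W_0'$.

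Next I would match these two quotients under Langlands duality. The restriction of $\Theta$ to $\ft$ is a regular elliptic element of $W_G$ of order $m$, since $\lambda=mx$ is conjugate to a principal coweight and $m$ is a regular elliptic number; the same holds for the restriction of $\theta$ to $\ct$ as an element of $W_{\LG}$. By Springer's theorem, regular elements of a fixed order in a Weyl group form a single conjugacy class, so under the canonical isomorphism $W_G\simeq W_{\LG}$ combined with the Killing-form identification $\ft\simeq\ct$, the two gradings correspond. This yields a linear isomorphism $\ft_{Y,-N}\simeq\ct\cap\Lg_{-N}$ intertwining $W_0$ with $W_0'$, and composing the four identifications gives the desired isomorphism $\fg_{-N}/\!\!/G_0\simeq\Lg_{-N}/\!\!/\LG_0$.

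Finally, the bijection on regular semisimple loci is deduced from the Vinberg--Panyushev description: a regular semisimple element of $\fg_{-N}$ is $G_0$-conjugate to a regular element of $\ft_{Y,-N}$, namely one on which no restriction of a root of $(\fg,\ft)$ vanishes, and the analogous statement holds on the dual side. Since the linear isomorphism constructed above is induced by a root-data-level matching under Langlands duality, it carries regular elements to regular elements, and the constructed isomorphism restricts to the required bijection $\fg_N^{*,\rs}/G_0\simeq\Lg_{-N}^{\rs}/\LG_0$. The main obstacle I expect is making precise the Langlands-duality identification of the two gradings: although $\Theta$ and $\theta$ are defined respectively from a coweight of $G$ and the half-sum of positive coroots of $\LG$, which are formally different data, one must verify that after choosing a Weyl-group-equivariant identification $\ft\simeq\ct$ the two corresponding Weyl elements are actually conjugate. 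This is essentially the content of \cite[Lemma 3]{CYTheta} in a dual formulation, and I would either cite that lemma directly or rework its proof in the present setting.
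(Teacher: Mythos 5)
Your proposal is correct and follows essentially the same route as the paper: the paper's proof simply reduces (via $w_0$ and the conjugacy of $\lambda$ with $\crho$) to a citation of \cite[Lemma 3]{CYTheta}, whose content is exactly the Vinberg--Panyushev--Springer argument you spell out — Cartan subspace modulo little Weyl group on both sides, matched by the conjugacy of regular Weyl elements of order $m$ under the canonical identification $W_G\simeq W_{\LG}$. You correctly identify this cited lemma as the crux, so the only difference is that you unfold its proof rather than invoking it.
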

    \begin{proof}
    	Note that the longest element of Weyl group $w_0\in W$
    	conjugates $\crho$ to $-\crho$.
    	Thus $\theta=\Ad_{\crho(\zeta_m)}$ and $\theta^{-1}$
    	are conjugated by $\Ad_{w_0}$.
    	This induces an isomorphism 
    	$\Lg_{-N}\simeq\Lg_N$.
    	The isomorphism follows from 
    	replacing $\zeta_m$ with $\zeta_m^N$ in \cite[Lemma 3]{CYTheta},
    	where regular semisimple eigenvectors are the same as stable vectors
    	by Lemma 9 of \emph{loc. cit.}.
    	Here although the grading on $\fg$ is defined by $\Theta=\Ad_{\lambda(\zeta_m)}$
    	rather than $\Ad_{\crho(\zeta_m)}$ in \emph{loc. cit.},
    	since $\lambda$ is conjugated to $\crho$,
    	the $\zeta_m^N$-eigenspaces of 
    	the gradings by $\Theta$ and $\Ad_{\crho(\zeta_m)}$ 
    	are conjugated.
    \end{proof}
    
    \subsubsection{}\label{sss:matching}
    Fix $X\in\Lg_{-N}^{\rs}$
    and pick any irreducible isoclinic formal connection
    $\nabla\in\Loc(X,\nu)$.
    Let $\chi\in\Op_{\Lg}(X,\nu)$
    be any oper above $\nabla$.
    By Proposition \ref{p:oper fiber over Loc}.(iii),
    the values of $\chi$ on $S_{i,j}$ for $(i,j)\in\tilde{A}_\nu$
    are uniquely determined by $\nabla$,
    independent of the choice of $\chi$.
    This determines a character $\phi$ of $A$ 
    that lands inside $(\Spec A)^\rs$.
    By (iii) and (iv) of Corollary \ref{c:A intersection},
    we can regard $\phi$ as a point of $\Hit(D)_{\bj}^\rs$.
    By Proposition \ref{p:local classical diagram}.(ii),
    $\phi$ can be extended to a character $\tphi$ of $\bj$
    that corresponds to a point in $\bj^{*,\rs}\simeq\bj^\rs$,
     i.e. $\tphi\in h^{cl,-1}(\phi)$.
    Inflated by $J\rightarrow J/J^+\simeq\bj$ and composed with exponential,
    we obtain a character of $J$ of depth $\nu$,
    which we still denote by $\tphi$.
    
    Denote $\Vac_{\fj,\tphi}=\Ind^{\hg}_{\fj+\bC\bone}\tphi$
    and $\fZ_{\fj,\tphi}=\mathrm{Im}(\fZ\rightarrow\End\Vac_{\fj,\tphi})$.
    Let $\Op_{\Lg}(\fj,\tphi):=\Spec\fZ_{\fj,\tphi}$.
    Denote by $\Op_{\Lg}(\nabla)=p^{-1}(\nabla)$
    the opers with underlying connection $\nabla$.
    
    \begin{lem}\label{l:central support of toral sc repn}\mbox{}
    	\begin{itemize}
    		\item [(i)]
    		Let $\psi$ be the character on 
    		$A_{-N}:=\bC[S_{i,j},\ell_{i,j}=-N]\subset A'\simeq A$
    		determined by the coefficients of the unique element
    		$p_{-1}+\sum_{\ell_{i,j}=-N}v_{i,j}p_i\in\LG\cdot X$.
    		Then $\Op_{\Lg}(X,\nu)$ is the central support of
    		$\Vac_{\fj^+}/\ker \psi$.
    		
    		\item [(ii)]
    		For any $\tphi_1,\tphi_2\in h^{cl,-1}(\phi)$,
    		$\Op_{\Lg}(\fj,\tphi_1)=\Op_{\Lg}(\fj,\tphi_2)$.
    		
    		\item [(iii)]
    		$\Op_{\Lg}(\nabla)
    		=\bigcup_{\tphi\in h^{cl,-1}(\phi)}\Op_{\Lg}(\fj,\tphi)
    		=\Op_{\Lg}(\fj,\tphi)$ 
    		for any $\tphi\in h^{cl,-1}(\phi)$.
    	\end{itemize}
    	
    \end{lem}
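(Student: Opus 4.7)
For part (i), the plan is to reduce to scheme-level information about opers. First I would invoke Lemma \ref{l:Op_j+=Op_=<nu} to identify $\Op_{\Lg}(D)_{\fj^+}$ with $\Op_{\Lg}(D)_{\leq\nu}$. Since $\ker\psi\subset A_{-N}\subset\fZ_{\fj^+}$ is a central ideal, the central support of the quotient $\Vac_{\fj^+}/\ker\psi$ is the vanishing locus $V(\ker\psi)\subset\Op_{\Lg}(D)_{\leq\nu}$. In the oper canonical-form coordinates $v_{i,j}$, this imposes $v_{i,j}=\psi(S_{i,j})$ exactly when $\ell_{i,j}=-N$; by construction of $\psi$, these values are the coefficients making $p_{-1}+\sum_{\ell_{i,j}=-N}v_{i,j}p_i$ lie in $\LG\cdot X$. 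Proposition \ref{p:oper fiber over Loc}.(i) then identifies this subscheme with $\Op_{\Lg}(X,\nu)$.

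For part (ii), I would use Corollary \ref{c:A intersection}.(iii): the isomorphism $A\simeq\Fun\Hit(D)_{\bj}$ identifies the inclusion $A\hookrightarrow U(\bj)$ with the pullback along $h^{cl}|_{\bj^*}$. Therefore the characters $\tphi_1|_A$ and $\tphi_2|_A$ agree whenever $\tphi_1,\tphi_2\in h^{cl,-1}(\phi)$, so $\ker(\tphi_1|_A)=\ker(\tphi_2|_A)$ and hence $\fZ_{\fj,\tphi_1}=\fZ_{\fj^+}/\ker(\tphi_1|_A)=\fZ_{\fj,\tphi_2}$. Taking $\Spec$ yields (ii).

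For part (iii), by (ii) the union of the $\Op_{\Lg}(\fj,\tphi)$ collapses to a single subscheme, so only the equality with $\Op_{\Lg}(\nabla)$ remains. Viewed inside $\Op_{\Lg}(D)_{\fj^+}=\Op_{\Lg}(D)_{\leq\nu}$, the subscheme $\Op_{\Lg}(\fj,\tphi)$ is defined by the ideal generated by $\ker(\tphi|_A)$; concretely, this imposes $v_{i,j}=\phi(S_{i,j})$ for every $(i,j)\in\tilde A_\nu$. Combining with part (i), I would first place $\Op_{\Lg}(\fj,\tphi)\subset\Op_{\Lg}(X,\nu)$ and then apply the isomorphism \eqref{eq:Op=Loc(X,nu)} of Proposition \ref{p:oper fiber over Loc}.(iii), which shows that fixing the coefficients $v_{i,j}$ for $-N\leq\ell_{i,j}\leq-1$ pins down the underlying connection uniquely. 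Since the constructed $\phi$ was read off from an oper above $\nabla$ in \S\ref{sss:matching}, the unique connection is exactly $\nabla$, giving $\Op_{\Lg}(\fj,\tphi)=p^{-1}(\nabla)=\Op_{\Lg}(\nabla)$.

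The main technical hurdle underlying parts (i) and (iii) is the passage from algebraic statements about ideals in $\fZ_{\fj^+}$ to the corresponding statements about central supports of the relevant modules: one needs that $\Vac_{\fj^+}$ and its quotients by central ideals remain faithful over the corresponding quotients of $\fZ_{\fj^+}$. This should follow from the local Hitchin picture of \S\ref{ss:local Hit}: Proposition \ref{p:local Hitchin image} yields $\gr\fZ_{\fj^+}\simeq\Fun\Hit(D)_{\fj^+}$, and the analogous freeness of $\gr\Vac_{\fj^+}\simeq\Fun\fj^{+,\perp}$ over $\Fun\Hit(D)_{\fj^+}$, coupled with the projection $\fj^{+,\perp}\twoheadrightarrow\bj^*$ from Proposition \ref{p:local classical diagram}, ensures that quotienting by $\ker\psi$ or $\ker(\tphi|_A)$ commutes with taking central supports, making the scheme-theoretic equalities in (i) and (iii) rigorous.
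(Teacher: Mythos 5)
Your part (i) is fine and coincides with the paper's (equally terse) argument via Lemma \ref{l:Op_j+=Op_=<nu} and Proposition \ref{p:oper fiber over Loc}.(i). The problem is in parts (ii) and (iii), where you repeatedly use the identity $\fZ_{\fj,\tphi}=\fZ_{\fj^+}/\ker(\tphi|_A)$, equivalently $\Op_{\Lg}(\fj,\tphi)=V(\ker(\tphi|_A))$ inside $\Op_{\Lg}(D)_{\leq\nu}$. This is precisely the hard content of the lemma (it is the toral case of Conjecture \ref{c:local conj}.(ii)), and only one inclusion is available at this stage: Proposition \ref{p:local quant diagram} shows that $A$ acts on $\Vac_{\fj,\tphi}$ through $\tphi|_A$, giving $\Op_{\Lg}(\fj,\tphi)\subset V(\ker(\tphi|_A))$. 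The reverse inclusion --- that no further central elements die in $\End\Vac_{\fj,\tphi}$ --- is what needs proof, and your closing paragraph does not supply it: the faithfulness-after-$\gr$ argument you sketch applies to quotients of $\Vac_{\fj^+}$ by ideals generated by \emph{central} elements, whereas $\Vac_{\fj,\tphi}=\Vac_{\fj^+}/\ker\tphi$ is the quotient by the ideal generated by $\ker\tphi\subset U(\bj)$, and $\bj$ is not central in $\hg$. At the graded level you would have to show that the closure of $h^{cl}((\ker\tphi)^\perp)$ fills out the whole fiber of $\Hit(D)_{\fj^+}\to\Hit(D)_{\bj}$ over $\phi$, which is not among the results you cite. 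As written, (ii) and (iii) are therefore circular: each rests on the unproved identification.

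The paper closes this gap by two different devices. For (ii) it computes $\ker(\fZ\to\fZ_{\fj,\tphi})=\fZ\cap(\Ug\ker\tphi)$ and uses Proposition \ref{p:local classical_diagram}.(iii) --- more precisely, that any two $\tphi_1,\tphi_2\in h^{cl,-1}(\phi)$ satisfy $\tphi_2=\Ad_w\tphi_1$ for some $w\in N_{G_0}(\ft_{Y,-N})$; since $\Ad_w$ fixes $\fZ$ pointwise, the two kernels coincide. (Your observation that $\tphi_1|_A=\tphi_2|_A$ is true but too weak.) For (iii) it quotients $\Vac_{\fj^+}$ by the central ideal generated by $\ker\phi\subset A$ only, obtaining $\Vac_\phi$ whose central support is $\Op_{\Lg}(\nabla)$, and then uses $U(\bj)/\ker\phi\simeq\Fun h^{cl,-1}(\phi)\simeq\bC^{|W_0|}$ to split $\Vac_\phi\simeq\bigoplus_{\tphi\in h^{cl,-1}(\phi)}\Vac_{\fj,\tphi}$ as a finite direct sum; the central support of the sum is the union of the supports of the summands, and (ii) collapses that union to a single $\Op_{\Lg}(\fj,\tphi)$. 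You should adopt this decomposition (or prove the graded surjectivity statement above) rather than assert the equality of $\Op_{\Lg}(\fj,\tphi)$ with $V(\ker(\tphi|_A))$ outright.
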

    \begin{proof}
    	(i):
    	This follows immediately from Lemma \ref{l:Op_j+=Op_=<nu}
    	and Proposition \ref{p:oper fiber over Loc}.(i).
    	
    	(ii):
    	We first exhibit the central support in a more explicit form.
    	For $\tphi=\tphi_1$ or $\tphi_2$,
    	\[
    	\End\Vac_{\fj,\tphi}
    	\simeq\Hom_{\fj}(\bC_{\tphi},\Vac_{\fj,\tphi})
    	\simeq\Vac_{\fj,\tphi}^{\ker\tphi}
    	\subset\Vac_{\fj,\tphi}
    	=\Ug/\Ug\ker\tphi.
    	\]
    	Thus 
    	\[
    	\ker(\fZ\rightarrow\fZ_{\fj,\tphi})
    	=\ker(\fZ\rightarrow\Ug/\Ug\ker\tphi)
    	=\fZ\cap(\Ug\ker\tphi).
    	\]
    	
    	By Proposition \ref{p:local classical diagram}.(iii)
    	and its proof,
    	any two elements $\tphi_1,\tphi_2\in h^{cl,-1}(\phi)$
    	satisfy $\tphi_2=\Ad_w\tphi_1$ 
    	for some element $w\in N_{G_0}(\ft_{Y,-N})$,
    	so that $\ker\tphi_2=\Ad_w\ker\tphi_1\subset U(\fj)$.
    	Since $\Ad_w$ acts trivially on $\fZ$, we obtain 
    	\[
    	\fZ\cap(\Ug\ker\tphi_1)
    	=\Ad_w(\fZ\cap(\Ug\ker\tphi_1))
    	=\fZ\cap(\Ug\ker\tphi_2).
    	\]
    	
    	From the above discussion, 
    	$\Vac_{\fj,\tphi_1}$ and $\Vac_{\fj,\tphi_2}$ have the same central supports.
    	
    	(iii):
        The second equality follows from (ii).
        To prove the first equality,
    	we first show inclusion
    	$\Op_{\Lg}(\fj,\tphi)\subset\Op_{\Lg}(\nabla)$
    	for any $\tphi\in h^{cl,-1}(\phi)$.
    	Let $\chi\in\Op_{\Lg}(\fj,\tphi)$.
    	Note that we have surjection
    	\[
    	q:\Vac_{\fj^+}=\Ind^{\hg}_{\fj+\bC\bone}U(\bj)
    	\rightarrow
    	\Vac_{\fj,\tphi}=\Ind^{\hg}_{\fj+\bC\bone}\tphi.
    	\]
    	Thus $\chi\in\Op_{\Lg}(D)_{\fj^+}=\Op_{\Lg}(D)_{\leq\nu}$.
    	Also, by Proposition \ref{p:local quant diagram}
    	$A$ acts on $\Vac_{\fj,\tphi}$ via $\tphi$.
    	By Proposition \ref{p:oper fiber over Loc}.(iii),
    	the underlying connection of $\chi$ is $\nabla$.
    	
    	Conversely, let $\phi$ be the character of $A$
    	that is determined by $\nabla$ via Proposition \ref{p:oper fiber over Loc}.(iii).
    	By Lemma \ref{l:Op_j+=Op_=<nu} and Proposition \ref{p:local quant diagram},
    	$\Op_{\Lg}(\nabla)$ is the central support of
    	\[
    	\Vac_\phi:=\Ind^{\hg}_{\fj+\bC\bone}(U(\bj)/\ker\phi).
    	\]
    	
    	Regard $\phi\in\Hit(D)_{\bj}^\rs$.
    	Then $U(\bj)/\ker\phi\simeq\Fun h^{cl,-1}(\phi)$.
    	By Proposition \ref{p:local classical diagram}.(iii),
    	$|h^{cl,-1}(\phi)|=|W_0|$ is finite.
    	We obtain
    	\[
    	U(\bj)/\ker\phi\simeq\bigoplus_{\tphi\in h^{cl,-1}(\phi)}U(\bj)/\ker\tphi
    	\]
    	and
    	\[
    	\Vac_\phi\simeq\bigoplus_{\tphi\in h^{cl,-1}(\phi)}\Ind^{\hg}_{\bj+\bC\bone}\bC_{\tphi}.
    	\]
    	
    	Thus the central support of $\Vac_\phi$
    	is the union of central supports of $\Vac_{\fj,\tphi}$'s.
    \end{proof}

    \subsection{The $K$-type of the central block}
    For an oper $\chi\in\Op_{\Lg}(D^\times)$,
    denote by $\hg-\mathrm{mod}_\chi$
    the category of smooth $\hg$-modules
    on which the center $\fZ$ acts via $\chi$.
    We say a $\hg$-module is $(J,\tphi)$-equivariant if
    it contains an object mapped into by a nonzero map from
    $\Vac_{\fj,\tphi}$.
    Since $J$ is pro-unipotent,
    this is equivalent to that it contains a nonzero $\ker\tphi$-equivariant object.
    
    We have the following refinement of Corollary \ref{c:K-type and L-parameter}
    in the case of isoclinic formal connections.
    \begin{thm}\label{t:main}
    	Let $Y\in\fg_N^{*,\rs}$ and $X\in\Lg_{-N}^\rs$ 
    	match under \eqref{eq:match rs leading terms}.
        The category $\hg-\mathrm{mod}_\chi$ contains 
        a nonzero $(J,\tphi)$-equivariant object
        for some $\tphi\in\bj^{*,\rs}$ with component $Y$ in $\ft_{Y,N}$ 
        if and only if
        $\chi\in\Op_{\Lg}(X,\nu)$,
        i.e. the underlying connection $\nabla$ of $\chi$
        is isoclinic of depth $\nu$
        with leading term $X$.
        
        Moreover, $\tphi$ can be any preimage of $\phi\in\Spec A$
        for the $\phi$ associated to $\nabla$.
        Conversely, $\nabla$ has a unique equation of the form \eqref{eq:Op=Loc(X,nu)}
        where $v_{i,j}=\tphi(\overline{S_{i,j}})$
        for $\overline{S_{i,j}}$ the image of $S_{i,j}$
        in $A\subset U(\bj)$, $-N\leq\ell_{i,j}\leq-1$.
    \end{thm}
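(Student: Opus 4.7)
The plan is to assemble the theorem from the preparatory material essentially as a refined, unconditional version of Corollary \ref{c:K-type and L-parameter} for the toral/isoclinic case, with the matching of leading terms supplied by \eqref{eq:match rs leading terms} and Lemma \ref{l:central support of toral sc repn}.

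For the \emph{only if} direction, suppose $\hg-\mathrm{mod}_\chi$ contains a nonzero $(J,\tphi)$-equivariant object with $\tphi\in\bj^{*,\rs}$ having component $Y$ in $\ft_{Y,N}^*$. The nonvanishing gives a nonzero map $\Vac_{\fj,\tphi}\to M$ for some $M\in\hg-\mathrm{mod}_\chi$, so by the standard argument (essentially that of \cite[Lemma 10.3.2]{FrenkelBook} invoked in the proof of Corollary \ref{c:K-type and L-parameter}), $\chi$ lies in the central support $\Op_{\Lg}(\fj,\tphi)$. By Lemma \ref{l:central support of toral sc repn}.(iii) this central support equals $\Op_{\Lg}(\nabla)$ where $\nabla$ is the unique irreducible isoclinic formal connection attached to $\tphi$ through \S\ref{sss:matching}. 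Since the image of $\tphi$ under the local Hitchin map fixes the coefficients $v_{i,j}$ for $\ell_{i,j}=-N$ through Corollary \ref{c:A intersection}.(iv), and these in turn match the regular semisimple leading term of $\nabla$ in the Kostant-section form \eqref{eq:Kostant section conn}, the bijection \eqref{eq:match rs leading terms} forces the leading term of $\nabla$ to be $X$. Hence $\chi\in\Op_{\Lg}(X,\nu)$.

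For the \emph{if} direction, fix $\chi\in\Op_{\Lg}(X,\nu)$ and set $\nabla=p(\chi)$. Via \S\ref{sss:matching} and Proposition \ref{p:oper fiber over Loc}.(iii), $\chi$ determines a well-defined character $\phi$ of $A$ landing in $(\Spec A)^\rs$; pick any $\tphi\in h^{cl,-1}(\phi)\subset\bj^{*,\rs}$, which by construction has component $Y$ in $\ft_{Y,N}^*$ (up to the free $W_0$-action of Proposition \ref{p:local classical diagram}.(iii)). By Lemma \ref{l:central support of toral sc repn}.(iii), $\chi\in\Op_{\Lg}(\nabla)=\Op_{\Lg}(\fj,\tphi)$, so it remains to verify that $\Vac_{\fj,\tphi}/\ker\chi$ is nonzero. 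This is where I would reuse the associated-graded argument from Corollary \ref{c:K-type and L-parameter}: $\gr\Vac_{\fj,\tphi}\simeq\Fun((\ker\tphi)^\perp)$ and $\gr\fZ_{\fj,\tphi}\simeq\Fun\Hit_\tphi$, so $\gr(\Vac_{\fj,\tphi}/\ker\chi)\simeq\Fun H_0$ where $H_0$ is the (nonempty) fiber of the local Hitchin map over the image of $\chi$ on the classical side; Proposition \ref{p:local Hitchin image} and Proposition \ref{p:local classical diagram}.(iii) guarantee this fiber is nonempty.

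Finally, the explicit formula $v_{i,j}=\tphi(\overline{S_{i,j}})$ for $-N\leq\ell_{i,j}\leq -1$ comes by chasing the definitions: Proposition \ref{p:oper fiber over Loc}.(iii) shows that the canonical-form data of $\nabla$ is uniquely encoded by the coefficients $v_{i,j}$ with $-N\leq\ell_{i,j}\leq -1$; these coefficients are the values of $\chi$ on $S_{i,j}$; and by Proposition \ref{p:local quant diagram} together with Corollary \ref{c:A intersection}.(ii)--(iii), the restriction of $\chi$ to $A\subset\fZ_{\fj^+}$ factors through $\tphi:U(\bj)\to\bC$, yielding $v_{i,j}=\tphi(\overline{S_{i,j}})$. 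The main subtle step is the nonvanishing of $\Vac_{\fj,\tphi}/\ker\chi$; it is cleanest to handle it once at the classical level via the local Hitchin image calculation of \S\ref{ss:local Hit} rather than trying to track things directly on $\Vac_{\fj,\tphi}$.
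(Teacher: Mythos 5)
Your proposal is correct and follows essentially the same route as the paper: the \emph{only if} direction via the argument of \cite[Lemma 10.3.2]{FrenkelBook} combined with the chain of inclusions $\Op_{\Lg}(\fj,\tphi)\subset\Op_{\Lg}(\nabla)\subset\Op_{\Lg}(X,\nu)$ from Lemma \ref{l:central support of toral sc repn}, and the \emph{if} direction by producing $\tphi\in h^{cl,-1}(\phi)$ and showing $\Vac_{\fj,\tphi}/\ker\chi\neq0$ by the associated-graded/local Hitchin computation as in Corollary \ref{c:K-type and L-parameter}. The paper's proof is terser but relies on exactly the same ingredients, including deducing the explicit formula $v_{i,j}=\tphi(\overline{S_{i,j}})$ from Lemma \ref{l:central support of toral sc repn} and Proposition \ref{p:oper fiber over Loc}.(iii).
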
    
    \begin{proof}
    	If part: 
    	Let $\nabla$ be the underlying connection of $\chi\in\Op_{\Lg}(X,\nu)$.
    	Let $\phi\in\Spec A$ correspond to $\nabla$.
    	By Lemma \ref{l:central support of toral sc repn},
    	$\chi\in\Op_{\Lg}(\fj,\tphi)$ for every $\tphi\in h^{cl,-1}(\phi)$.
    	As in Corollary \ref{c:K-type and L-parameter},
    	\[
    	\Vac_{\fj,\tphi}/\ker\chi
    	\]
    	is nonzero $(J,\tphi)$-equivariant.
    	
    	Only if part:
    	by the same argument as the proof of \cite[Lemma 10.3.2]{FrenkelBook},
    	there exists a module $M\in\hg-\mathrm{mod}_\chi$
    	with a nonzero map $\Vac_{\fj,\tphi}\rightarrow M$.
    	Then
    	\[
    	\chi\in\Op_{\Lg}(\fj,\tphi)\subset\Op_{\Lg}(\nabla)\subset\Op_{\Lg}(X,\nu).
    	\]
    	
    	The description of $\tphi$ and $\nabla$ in terms of each other
    	is clear from Lemma \ref{l:central support of toral sc repn} and its proof.
    \end{proof}
    
    \begin{rem}\label{r:local conj for toral}
    	To sum up, 
    	for toral supercuspidal representations and irreducible isoclinic connections,
    	Conjecture.(i)-(iii) \ref{c:local conj} has been established
    	in Lemma \ref{l:central support of toral sc repn}
    	and Theorem \ref{t:main}.
    	The part (iv) of the conjecture follows from the same proof as 
    	Lemma \ref{l:central support of toral sc repn}.(ii).
    \end{rem}

    \section{Application: Airy connections}\label{s:Airy conn}
    As an example and application of our local results,
    we prove that
    the Langlands parameters of Hecke eigensheaves 
    constructed in \cite{JKY}
    are Airy connections for reductive groups
    \cite{KSRigid,HJRigid}.
    
    \subsection{Airy connections and Airy automorphic data}
    \subsubsection{Airy connections}
    Following \cite[Definition 5.5.1]{HJRigid},
    an \emph{Airy $\LG$-connection}
    is a $\LG$-connection on $\bP^1-\{0\}$\footnote{We let the singularity be at $0$ instead of $\infty$ for convenience of notations.}
    that is isoclinic of slope $\nu=\frac{1+h}{h}$ at $0$.
    Here $h$ is the Coxeter number of $\LG$.
    
    For an Airy connection $\nabla$,
    its restriction $\nabla_0:=\nabla|_{D_0^\times}$
    is an isoclinic formal connection.
    Moreover, since $h$ is a regular elliptic number,
    by the same proof as \cite[Lemma 7]{YiFG}
    we know $\nabla_0$ is irreducible.
    By \cite[Theorem 1.8, Theorem 4.1]{JYDeligneSimpson},
    any isoclinic formal connection $\nabla_0$ of slope $\nu$
    is the formal type at $0$ of an Airy connection $\nabla$.
    By \cite[Theorem 5.5.2]{HJRigid},
    Airy connections are \emph{physically rigid},
    i.e. for any Airy connection $\nabla$ and another $\LG$-connection $\nabla'$
    on $\bP^1-\{0\}$,
    $\nabla|_{D_0^\times}\simeq\nabla'|_{D_0^\times}$ implies
    $\nabla\simeq\nabla'$.
    Therefore Airy connections are in bijection with
    isoclinic formal connections of slope $\nu$.
    Denote by $\mathrm{Ai}_{\LG}(X)$ the isomorphism classes 
    of Airy connections with leading term $X$ in its canonical form at $0$.
    
    Since $\nu>1$,
    combining the above with \eqref{eq:Op=Loc(X,nu)}
    and Proposition \ref{p:global ext nu>=1},
    we obtain
    \begin{prop}\label{p:Airy eq}
    	We have isomorphisms
    	\[
    	\mathrm{Ai}_{\LG}(X)
    	\simeq\Loc(X,\frac{1+h}{h},\bP^1-\{0\})
    	\simeq\Loc(X,\frac{1+h}{h}),
    	\]
    	where each such Airy connection has an unique equation of the form
    	\[
    	\td+(t^{-2}p_{-1}+v_{n,2h}t^{-3}p_n+\sum_{i=1}^n v_{i,2d_i-1}t^{-2}p_i)\td t,
    	\quad	p_{-1}+v_{n,2h}p_n\in\LG\cdot X.
    	\]
    \end{prop}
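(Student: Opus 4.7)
The plan is to chain two isomorphisms — physical rigidity on one side and the global-extension parametrization from Proposition \ref{p:global ext nu>=1} on the other — and then carry out a bookkeeping computation of the index set $\{(i,j) : -N \leq \ell_{i,j} \leq -1\}$ in the Coxeter case $N = h+1$, $m = h$.

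First I would invoke the discussion immediately preceding the proposition: by \cite[Theorem 5.5.2]{HJRigid} together with \cite[Theorem 1.8, Theorem 4.1]{JYDeligneSimpson}, restriction to $D_0^\times$ is a bijection from $\mathrm{Ai}_{\LG}(X)$ to the set of isomorphism classes of isoclinic formal connections of slope $(h+1)/h$ with leading term $X$, i.e.\ to $\Loc(X, (h+1)/h)$. Proposition \ref{p:global ext nu>=1}.(i) then supplies the second isomorphism $\Loc(X, (h+1)/h, \bP^1-\{0\}) \simeq \Loc(X, (h+1)/h)$ via the same restriction map, with each class uniquely represented by an equation of the form \eqref{eq:global ext}. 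These two isomorphisms together give the displayed chain.

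It remains to specialize the shape of \eqref{eq:global ext} to the Coxeter case. Substituting $N = h+1$ and $m = h$ into \eqref{eq:ell_ij} yields $\ell_{i,j} = (d_i-1)(2h+1) - hj$. Since $\ell_{i,j}$ decreases by $h$ as $j$ increases by $1$, for each fixed $i$ the window $[-N, -1]$ of length $h+1$ contains at most two admissible values. A congruence mod $h$ pins down $j = 2d_i - 1$ as the unique solution with $\ell_{i, 2d_i - 1} = d_i - h - 1 \in [-h, -1]$. The boundary value $\ell_{i,j} = -N$ requires $h \mid d_i(h+1)$, hence $d_i = h$, so the only such pair is $(n, 2h)$, in agreement with the constraint $m \mid d_iN$ appearing in \eqref{eq:global ext}. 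Evaluating the exponent $-j + 2d_i - 3$ of $t$ in \eqref{eq:global ext} then gives $-2$ for every pair $(i, 2d_i - 1)$ and $-3$ for $(n, 2h)$, which is exactly the equation displayed in the proposition; the normalization $p_{-1} + v_{n, 2h} p_n \in \LG \cdot X$ is the constraint of \eqref{eq:global ext} specialized to the unique pair with $\ell_{i,j} = -N$.

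I expect no serious obstacle in this argument: the heavy lifting — physical rigidity and the uniqueness of the global extension — is already in place, and what remains is a direct indexing calculation. The one point that deserves care is the uniqueness clause of the proposition, but this is inherited from the uniqueness in Proposition \ref{p:global ext nu>=1}.(i). A minor sanity check is that $(h+1, h) = 1$, so $N = h+1$ and $m = h$ are indeed coprime as required for the slope $\nu = (h+1)/h$ to be in lowest terms.
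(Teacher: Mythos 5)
Your proposal is correct and follows the same route as the paper, which derives the proposition directly by combining physical rigidity (\cite[Theorem 5.5.2]{HJRigid} plus the existence statement from \cite{JYDeligneSimpson}) with Proposition \ref{p:global ext nu>=1} and \eqref{eq:Op=Loc(X,nu)}. Your index bookkeeping for $N=h+1$, $m=h$ (yielding $j=2d_i-1$ with exponent $-2$, and the extra pair $(n,2h)$ with exponent $-3$) is the correct specialization of \eqref{eq:global ext}.
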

    
    Comparing the above with \cite[equation (8)]{KSMon},
    we conclude that every Airy $\LG$-connection is
    \emph{framable}.
    Observe that \eqref{eq:KS Airy} is a special case of the above equation.

    \subsubsection{Airy automorphic data}
    The Airy automorphic data \cite[Definition 8]{JKY}
    are special cases of toral $K$-types $(J,\tphi)$
    of depth $\nu=\frac{1+h}{h}$
    defined in \S\ref{ss:toral sc repn}\footnote{The use of notations $\tphi,\phi$ are slightly different from \cite{JKY}.}.
    Here $m=h$, $N=1+h$.
    We recall the particular choice of $\tphi$ in the \emph{loc. cit.},
    which we will see being necessary.
    Recall $\tphi\in\bj^*\simeq\bigoplus_{-1-h\leq i\leq -1}\ft_{Y,i}u^i\frac{\td u}{u}$,
    where $Y\in\fg_{-1-h}^\rs$ is the component of $\tphi$ in $\fg_{-1-h}$.
    We require $\tphi$ to have zero components in
    $\ft_{Y,i}$ for $-h\leq i\leq-(1+\frac{h}{2})$ when $h$ is even
    (resp. for $-h\leq i\leq-(n+1)$ when $h=2n+1$ is odd, $\fg=\mathfrak{sl}_{2n+1}$).
    We call such $\tphi$ \emph{special}.
    
    Denote by $\cG'$ the group scheme on $\bP^1$
    satisfying $\cG'|_{\bP^1-\{0\}}\simeq G\times(\bP^1-\{0\})$
    and $\cG(\cO_0)=J^+$.
    Denote by $\Bun_{\cG'}$
    the moduli stack of $\cG'$-bundles.
    The group $J$ acts on $\Bun_{\cG'}$ at $0$.
    Let $\cL_{\tphi}=\tphi^*(\td-\td t)$ be a character $D$-module on $J$.
    
    \begin{prop}\cite[Lemma 21]{JKY}\label{p:rigid datum}
    	For $\tphi$ special, 
    	there exists unique $(J,\cL_{\tphi})$-equivariant irreducible holonomic
    	$D$-module $\cA_{\tphi}$ on $\Bun_{\cG'}$ up to isomorphism.
    \end{prop}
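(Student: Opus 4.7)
The plan is to prove this rigidity statement following the framework of rigid automorphic data of Yun and Heinloth--Ngô--Yun. The first step is to analyze the action of $J$ on $\Bun_{\cG'}$ via the uniformization theorem: since $\cG'$ is trivial away from $0$ and $\cG'(\cO_0) = J^+$, the stack $\Bun_{\cG'}$ is uniformized by $G[t,t^{-1}]\backslash LG/J^+$. Thus $J$-orbits on $\Bun_{\cG'}$ correspond to double cosets $G[t,t^{-1}]\backslash LG/J$, which can be stratified using the Cartan--Iwahori--Bruhat decomposition. One isolates the unique open dense $J$-orbit $\cU \subset \Bun_{\cG'}$ corresponding to the trivial $\cG'$-bundle.

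Next, I would compute the stabilizer $\Stab_J(x_0)$ of a point $x_0 \in \cU$. By the uniformization, this stabilizer is isomorphic to $J \cap G[t,t^{-1}]$, which is a pro-unipotent subgroup whose Lie algebra is determined by the pairing between $\fj \subset \fg[\![t]\!]$ and the polar part $t^{-1}\fg[t^{-1}]$ coming from $G[t,t^{-1}]$. The critical technical input, and the reason for the "special" condition on $\tphi$, is that under the grading induced by $\Theta$, the Lie algebra of $\Stab_J(x_0)$ is forced to project into exactly those graded pieces $\ft_{Y,i}$ (for the middle range of indices) where $\tphi$ is prescribed to vanish. Once $\tphi|_{\Stab_J(x_0)}$ is trivial, the existence follows: the $(J,\cL_\tphi)$-equivariant rank one local system on $\cU$ is unique by pro-unipotence of the stabilizer, and its middle extension to $\Bun_{\cG'}$ yields an irreducible holonomic $D$-module $\cA_\tphi$.

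For uniqueness, I would show that on every other $J$-orbit $\cU'$, the restriction $\tphi|_{\Stab_J(x')}$ is nontrivial for $x' \in \cU'$. This forces any $(J,\cL_\tphi)$-equivariant irreducible holonomic $D$-module on $\Bun_{\cG'}$ to be supported on the closure $\overline{\cU}$ and hence to coincide with $\cA_\tphi$. Combined with the existence above, this gives the claimed unique object up to isomorphism.

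The main obstacle is the orbit-by-orbit stabilizer analysis. The non-open $J$-orbits are parametrized by certain nontrivial elements of the affine Weyl group, and at each such orbit the stabilizer can be described as the intersection of $J$ with a conjugate of $G[t,t^{-1}]$, giving a different graded slice of $\fj$. Showing that $\tphi$ is nontrivial on every such stabilizer amounts to verifying that for every nontrivial affine Weyl translate, the resulting slice contains a graded component where $\tphi$ is prescribed to be nonzero. The vanishing of $\tphi$ in the middle-degree range—forced by the "special" condition—is calibrated precisely so that the open orbit is the only one where full triviality occurs, whereas any shift of the slice hits a nonvanishing component of $\tphi$. This combinatorial balancing between the Moy--Prasad filtration at $0$ and the polar lattice at infinity is the technical heart of the argument.
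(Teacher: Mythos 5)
The paper does not prove this statement; it is quoted verbatim from \cite[Lemma 21]{JKY}, and your outline is essentially the strategy used there and echoed in the paper's follow-up discussion: classify $J$-orbits via uniformization, show that the only \emph{relevant} orbit (one on which $\cL_{\tphi}$ is trivial on stabilizers) is the orbit of the trivial $\cG'$-bundle, and take the clean extension of the unique equivariant rank-one local system on it. So the route is right. Two of your assertions, however, are not: first, the uniformization is $\Bun_{\cG'}\simeq[L^-G\backslash LG/J^+]$ with $L^-G=G[\bP^1\setminus\{0\}]$ (there is only one puncture), not $G[t,t^{-1}]\backslash LG/J^+$; with $G[t,t^{-1}]$ the double cosets and the stabilizer $J\cap G[t,t^{-1}]$ come out wrong (the latter is dense in $J$). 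Second, the $J$-orbit of the trivial bundle is only locally closed, not open dense: the open stratum is the $I(1)$-orbit $[L^-G\backslash L^-G\,I(1)/J^+]$, of which the $J$-orbit is a proper closed substack (this is \cite[Theorem 13, Proposition 19]{JKY}, as used in the proof of Theorem \ref{t:Airy Langlands}). The existence/uniqueness argument survives this correction — clean extension from a locally closed relevant orbit still works — but the openness claim as written is false.

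One logical point is also worth straightening out. You present the ``special'' condition as simultaneously guaranteeing relevance of the trivial-bundle orbit and irrelevance of all the others. In fact the two halves have different sources: the irrelevance of every nontrivial orbit is proved in \cite[\S4.2.1--4.2.2]{JKY} \emph{without} assuming $\tphi$ special, using only the regular semisimplicity of the leading term $Y\in\fg_N^{*,\rs}$; the special condition (vanishing of the components of $\tphi$ in $\ft_{Y,i}$ for the middle range of $i$) is needed precisely and only so that $\cL_{\tphi}$ is trivial on the stabilizer of the trivial bundle, i.e.\ so that the residue pairing $\Res\,\kappa(\tphi,\bigoplus_{i=1+h/2}^{h-1}\fg(i))\frac{\td t}{t}$ vanishes. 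This is exactly the content of the proposition the paper proves immediately after the quoted statement, via the injectivity of the projection $p_-\colon\ft_{Y,-i}\to\fg(-i)$ and nondegeneracy of the Killing form.
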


    We explain below 
    why the assumption of $\tphi$ being special is necessary
    in Proposition \ref{p:rigid datum}.
    	
    Recall that relevant orbits are $J$-orbits on $\Bun_{\cG'}$
    over which $(J,\cL_{\tphi})$-equivariant sheaves can support.
    In other words,
    these are orbits over which 
    $\cL_{\tphi}$ is trivial on the stabilizer of $J$.
    The proof in \cite[\S4.2.1,\S4.2.2]{JKY} actually goes through
    without assuming $\tphi$ being special,
    which shows that there is at most one relevant orbit
    (assuming $G$ is simply-connected):
    the orbit of trivial $\cG'$-bundle.
    However, we have the following observation:
    \begin{prop}
    	The unit orbit is relevant if and only if $\tphi$ is special.
    \end{prop}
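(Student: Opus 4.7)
The plan is to identify the stabilizer of the trivial $\cG'$-bundle under the $\bj$-action on $\Bun_{\cG'}$ with an explicit subspace of $\bj$ and to verify that triviality of $\cL_{\tphi}$ on this stabilizer corresponds exactly to the specialness condition on $\tphi$. Using the Beauville--Laszlo description $\Bun_{\cG'}\simeq G(\bC[t^{-1}])\backslash LG/J^+$ and the fact that $J$ is pro-unipotent, the stabilizer is computed at the Lie-algebra level as the image of $\fj\cap(\fg[t^{-1}]+\fj^+)$ in $\bj$. Since $\tphi|_{\fj^+}=0$, relevance of the unit orbit is equivalent to $\tphi|_{\fj\cap\fg[t^{-1}]}=0$.

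Next, I would transfer to the $u$-coordinate via $\psi$ in \eqref{eq:g((t)) to g((u))} and compute the $u$-graded pieces of $\psi(\fg[t^{-1}])$. Decomposing $\fg=\bigoplus_{|w|\leq h-1}\fg^{(w)}$ under the principal grading by $\crho$, one has $\psi(\fg[t^{-1}])_{u^\ell}=\bigoplus_{w\geq\ell,\,w\equiv\ell\bmod h}\fg^{(w)}$, which collapses to the single summand $\fg^{(\ell)}$ for $1\leq\ell\leq h-1$ and vanishes for $\ell\geq h$. Combined with the $u$-graded description of $\fj,\fj^+$ from \S\ref{ss:toral sc repn}, this reduces the problem to computing, at each $u$-degree $1\leq\ell\leq h-1$, the image of $\fj_\ell\cap\fg^{(\ell)}u^\ell$ in $\bj_\ell=\ft_{Y,\ell\bmod h}u^\ell$.

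The key Lie-algebraic input consists of two facts about the Coxeter Cartan $\ft_Y$: (a) $\ft_Y\cap\fg^{(\ell)}=0$ for $1\leq|\ell|\leq h-1$; and (b) the projection $\pi_{\ft_Y}\colon\fg^{(\ell)}\to\ft_{Y,\ell\bmod h}$ along the root decomposition $\fg=\ft_Y\oplus\tau$ is surjective for $\lceil N/2\rceil\leq\ell\leq h-1$. For $\fg=\mathfrak{sl}_h$ both can be proved explicitly using the basis $Y,Y^2,\ldots,Y^{h-1}$ of $\ft_Y$: each $Y^k$ splits as a nonzero sum of a $\fg^{(k)}$-component and a $\fg^{(k-h)}$-component, which gives (a); and the Killing-form identities $\mathrm{tr}(E_{i,i+k}Y^{h-k})=1$ and $\mathrm{tr}(Y^jY^k)=h\,\delta_{j+k\equiv 0\bmod h}$ show that $\pi_{\ft_Y}(E_{i,i+k})$ is a nonzero multiple of $Y^k$, giving (b). For general simply-laced $\fg$ both statements follow from Kostant's description of $\ft_Y$ as the Coxeter Cartan together with the principal $\mathfrak{sl}_2$-theory. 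Below the Moy--Prasad jump, at $\ell<\lceil N/2\rceil$, we have $\fj_\ell=\ft_{Y,\ell}u^\ell$ and $\fj^+_\ell=0$, so (a) gives a vanishing contribution; above it, $\fj_\ell=\fg_\ell u^\ell$ and $\fj^+_\ell=\tau_\ell u^\ell$, so (b) gives the full $\ft_{Y,\ell}u^\ell$. Altogether this shows $\mathrm{Stab}=\bigoplus_{\lceil N/2\rceil\leq\ell\leq h-1}\ft_{Y,\ell}u^\ell$.

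Finally, dualizing via the Killing pairing $\ft_{Y,\ell}\simeq\ft_{Y,-\ell}^*$, the condition $\tphi|_{\mathrm{Stab}}=0$ amounts to the vanishing of the $\tphi$-components in $\ft_{Y,i}u^i$ for $i\in[-(h-1),-\lceil N/2\rceil]$, which matches exactly the specialness range $[-h,-(1+h/2)]$ (even $h$) or $[-h,-(n+1)]$ (odd $h=2n+1$, $\fg=\mathfrak{sl}_{2n+1}$) once we observe $\ft_{Y,-h}=\ft_{Y,0}=0$ by ellipticity. I expect the main obstacle to be the edge case $\ell=N/2$ when $N$ is even (i.e.\ $h$ odd), where $\fj_\ell$ is modified by the Lagrangian $\fm$ of \eqref{eq:Lagrangian}; surjectivity of the analogue of (b) in that degree depends on the specific $\fm$ and is precisely where the hypothesis $\fg=\mathfrak{sl}_{2n+1}$ enters.
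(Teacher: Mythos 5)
Your argument is, at its core, the Killing--dual of the paper's: the paper quotes \cite[\S4.4]{JKY} for the criterion that the unit orbit is relevant iff $\Res\,\kappa(\tphi,\bigoplus_{i=1+h/2}^{h-1}\fg(i)u^i)\frac{\td u}{u}=0$, and then deduces $Y_{-i}=0$ from the injectivity of the projection $p_-\colon\ft_{Y,-i}\to\fg(-i)$ (\cite[Lemma 15]{JKY}); your fact (b), surjectivity of $\pi_{\ft_Y}\colon\fg(i)\to\ft_{Y,i}$, is exactly equivalent to that injectivity via non-degeneracy of $\kappa$ on $\fg(-i)\times\fg(i)$ together with $\kappa(\ft_Y,\tau)=0$. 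What you do differently is re-derive the stabilizer of the unit coset from the uniformization rather than import the criterion, and that is where the proof is incomplete.

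The concrete gap is the degree $\ell=N/2$ when $N$ is even ($h$ odd, hence type $A_{2n}$), which you flag but do not resolve. There $\fj_{N/2}=(\ft_{Y,N/2}\oplus\fm)u^{N/2}$ and $\fj^+_{N/2}=\fm u^{N/2}$, so the contribution of this degree to the image of $\fj\cap(\fg[t^{-1}]+\fj^+)$ in $\bj$ is $\pi_{\ft_Y}\bigl(\fg(N/2)\cap(\ft_{Y,N/2}\oplus\fm)\bigr)$, which is not obviously all of $\ft_{Y,N/2}$. Since specialness explicitly requires $Y_{-(n+1)}=Y_{-N/2}=0$, the ``only if'' direction of the proposition in the odd case is precisely the claim that this image equals $\ft_{Y,N/2}$; leaving it ``dependent on the specific $\fm$'' leaves the proposition unproved for $\fg=\mathfrak{sl}_{2n+1}$. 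The paper sidesteps this entirely by quoting the relevance condition of \cite[\S4.4]{JKY}, which already accounts for the Lagrangian and produces the full $\fg(i)$ for every $i$ in the range $n+1\le i\le h-1$. (Relatedly, your reduction of relevance to $\tphi|_{\fj\cap\fg[t^{-1}]}=0$ is slightly too coarse: the stabilizer is $\fj\cap(\fg[t^{-1}]+\fj^+)$, which can strictly contain $(\fj\cap\fg[t^{-1}])+\fj^+$, and the discrepancy occurs exactly at $\ell=N/2$.) A smaller issue: (a) and (b) are proved only for $\mathfrak{sl}_h$, and ``simply-laced'' is not the right generality for the remaining cases -- the proposition covers all types with $h$ even, including $B$, $C$, $F_4$, $G_2$; for these you should cite \cite[Lemma 15]{JKY} (whose statement is the dual of (b) and whose proof also yields (a)) rather than gesture at Kostant theory.
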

    \begin{proof}
    	We assume $h$ even.
    	The proof for $h$ odd is just replacing all the $\frac{h}{2}$
    	with $n+1$, $h=2n+1$.
    	
    	Denote by $\fg=\bigoplus_{i\in\bZ}\fg(i)$
    	the $\bZ$-grading defined by $\crho_G$.
    	Then $\fg_i=\fg(i)\oplus\fg(i+h)$ for $-h-1\leq i\leq -1$.
    	Regard $\tphi\in\bj^*\simeq\bigoplus_{-1-h\leq i\leq -1}\ft_{Y,i}u^i\frac{\td u}{u}$.
    	By \cite[\S4.4]{JKY}, for the unit orbit to be relevant, we need
    	\begin{equation}\label{eq:2}
    	\Res(\kappa(\tphi,\bigoplus_{i=1+\frac{h}{2}}^{h-1}\fg(i)))\frac{\td t}{t}
    	\simeq\Res(\kappa(\tphi,\bigoplus_{i=1+\frac{h}{2}}^{h-1}\fg(i)u^i))\frac{\td u}{u}
    	=0.
    	\end{equation}
    	
    	Let $\tphi_i=Y_iu^i\frac{\td u}{u}$ 
    	be the component of $\tphi$ in $\ft_{Y,i}u^i\frac{\td u}{u}$.
    	Then \eqref{eq:2} holds if and only if
    	$\kappa(Y_{-i},\fg(i))=0$ for $1+\frac{h}{2}\leq -i\leq h-1$
    	(note $Y_{-h}\in\ft_{Y,-h}=\ft_{Y,0}=0$).
    	Denote by $p_-$ the projection from 
    	$\ft_{Y,-i}\subset\fg_{-i}$ to $\fg(-i)$.
    	Then 
    	\[
    	\kappa(Y_{-i},\fg(i))=\kappa(p_-(Y_{-i}),\fg(i))=\kappa(p_-(Y_{-i}),\fg_i).
    	\]
    	By \cite[Lemma 15]{JKY}, $p_-$ is injective.
    	Since $\kappa$ is non-degenerate on $\fg_{-i}\times\fg_i$,
    	$\kappa(p_-(Y_{-i}),\fg_i)=0$
    	if and only if $p_-(Y_{-i})=0$
    	if and only if $Y_{-i}=0$.
    \end{proof}

    \subsection{The Langlands correspondence}
    \begin{thm}\label{t:Airy Langlands}
    	Let $(J,\tphi)$ be such that $\tphi$ is special.
    	Let $\nabla_0$ be the isoclinic formal connection 
    	corresponding to $\tphi$ as in Theorem \ref{t:main}.
    	Let $\nabla$ be the Airy connection with formal type $\nabla_0$ at $0$. 
    	\begin{itemize}
    		\item [(i)]
    		$\nabla$ is the eigenvalue of $\cA_{\tphi}$.
    		
    		\item [(ii)]
    		$\nabla$ is physically rigid.
    	\end{itemize}
    \end{thm}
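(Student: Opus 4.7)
The strategy is to combine the local geometric Langlands correspondence for toral supercuspidal representations (Theorem \ref{t:main}) with Beilinson-Drinfeld localization and the uniqueness of $\cA_{\tphi}$ from Proposition \ref{p:rigid datum}. For part (i), I would choose a global oper structure $\chi$ on $\bP^1-\{0\}$ whose underlying connection is $\nabla$: such $\chi$ exists because Proposition \ref{p:Airy eq} provides an explicit equation for $\nabla$ that can be upgraded to an oper, and by Lemma \ref{l:central support of toral sc repn}.(iii) one can arrange $\chi|_{D_0^\times}\in\Op_{\Lg}(\fj,\tphi)$. Applying the Beilinson-Drinfeld localization functor to the critical-level module $\Vac_{\fj,\tphi}/\ker\chi$ produces a $(J,\cL_{\tphi})$-equivariant holonomic $D$-module on $\Bun_{\cG'}$ that is a Hecke eigensheaf with eigenvalue the underlying connection of $\chi$, namely $\nabla$. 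By Proposition \ref{p:rigid datum}, this localization is isomorphic to $\cA_{\tphi}$, so the eigenvalue of $\cA_{\tphi}$ is indeed $\nabla$.

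For part (ii), suppose $\nabla'$ is another $\LG$-connection on $\bP^1-\{0\}$ satisfying $\nabla'|_{D_0^\times}\simeq\nabla_0$. Irreducibility of $\nabla_0$ forces irreducibility of $\nabla'$, so Beilinson-Drinfeld theory applies. Pick a global oper structure $\chi'$ above $\nabla'$; then $\chi'|_{D_0^\times}\in\Op_{\Lg}(\nabla_0)=\bigcup_{\tphi'\in h^{cl,-1}(\phi)}\Op_{\Lg}(\fj,\tphi')$ by Lemma \ref{l:central support of toral sc repn}.(iii). Since $h^{cl,-1}(\phi)$ is a $W_0$-orbit (Proposition \ref{p:local classical diagram}.(iii)) and $W_0$-conjugation yields isomorphic rigid data, localizing $\Vac_{\fj,\tphi'}/\ker\chi'$ produces a nonzero $(J,\cL_{\tphi})$-equivariant Hecke eigensheaf with eigenvalue $\nabla'$. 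Uniqueness in Proposition \ref{p:rigid datum} identifies this sheaf with $\cA_{\tphi}$, whose eigenvalue by part (i) equals $\nabla$. Hence $\nabla'\simeq\nabla$, giving a Langlands-theoretic proof of the physical rigidity.

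The main obstacle is verifying that the Beilinson-Drinfeld localization of $\Vac_{\fj,\tphi}/\ker\chi$ is nonzero, which is essential for invoking Proposition \ref{p:rigid datum}. By the analysis preceding that proposition, nonvanishing amounts to nonvanishing at the unique relevant orbit, the trivial $\cG'$-bundle, and this is precisely where the hypothesis that $\tphi$ be special enters: without it, the unit orbit fails to be relevant and the localization is forced to vanish. A careful local computation of the coinvariants of $\Vac_{\fj,\tphi}/\ker\chi$ with respect to $J^+$, together with the identification of these coinvariants with the fiber of the localization at the trivial bundle, will be required. A related but softer point, namely that the global oper $\chi$ in part (i) can be chosen to restrict at $0$ into the prescribed subscheme $\Op_{\Lg}(\fj,\tphi)$, follows directly from Proposition \ref{p:Airy eq} together with Theorem \ref{t:main} by inspection of the explicit equations.
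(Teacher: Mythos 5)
Your overall strategy is the paper's: localize a quotient of $\Vac_{\fj,\tphi}$ along an oper structure, observe the result is a $(J,\cL_{\tphi})$-equivariant Hecke eigensheaf, and invoke the uniqueness in Proposition \ref{p:rigid datum} to identify it with $\cA_{\tphi}$, deducing (i) and (ii) simultaneously from the arbitrariness of $\nabla'$. However, there are two gaps. First, in part (ii) you ``pick a global oper structure $\chi'$ above $\nabla'$'' on all of $\bP^1-\{0\}$; this does not exist in general. Arinkin's theorem only provides a \emph{generic} oper structure, i.e.\ an oper on $\bP^1-\{0,z_1,\dots,z_q\}$ with extra apparent singularities. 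The paper handles this by taking such a $\chi\in\Op_{\Lg}(U)$, noting that at each $z_i$ the restriction $\chi_i$ is a monodromy-free oper lying in the central support of a Weyl module $\bV_{\lambda_i}$, and localizing $\Vac_{\fj,\tphi}\otimes\bigotimes_i\bV_{\lambda_i}/\ker\chi_i$ rather than $\Vac_{\fj,\tphi}/\ker\chi$ alone. Without these insertions the construction of the eigensheaf with eigenvalue $\nabla'$ breaks down. (For part (i) your shortcut is fine, since the explicit equation of Proposition \ref{p:Airy eq} is itself a global oper form on $\bP^1-\{0\}$.)

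Second, you correctly identify nonvanishing of the localization as the main obstacle, but you defer it (``a careful local computation \dots will be required'') rather than supply it; the same goes for holonomicity, which you do not address at all but which is needed before Proposition \ref{p:rigid datum} applies. These are the substantive content of the paper's proof: holonomicity follows because the unique relevant orbit $O$ is locally closed and $\cA_\chi\simeq j_*j^*\cA_\chi$; nonvanishing is proved by restricting to the open substack $O'=[L^-G\backslash L^-G\, I(1)/J^+]$, passing to the associated graded, and reducing via Proposition \ref{p:local Hitchin image} and a moment-map argument to showing that the image of $T^*O'\to\bj^*$ contains $0$. Your diagnosis that specialness of $\tphi$ enters exactly through the relevance of the unit orbit is correct, but as written the argument is a proof outline rather than a proof.
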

    \begin{proof}
    	The proof is just a simplified version of 
    	the proof of \cite[Theorem 5]{CYTheta}.
    	We sketch it for completeness.
    	We will prove (i) and (ii) together.
    	
    	Let $\nabla'$ be a $G$-connection on $\bP^1-\{0\}$
    	such that $\nabla'|_{D_0^\times}\simeq\nabla_0$.
    	Recall $\nabla_0$ is irreducible,
    	so is $\nabla'$.
    	By \cite[Corollary 1.1]{Arinkin},
    	$\nabla'$ has an oper structure 
    	$\chi\in\Op_{\Lg}(U)$
    	over some open curve
    	$U=\bP^1-S$
    	where $S=\{z_0=0,z_1,...,z_q\}$.
    	Here if $q=0$, we assign $z_1=\infty$.
    	So we always assume $q>0$.
    	
    	At $z=z_i$, $i>0$, $\chi_i:=\chi|_{D_{z_i}^\times}$ is 
    	a monodromy free oper.
    	As in \cite[\S4.2.2]{YiFG}, 
    	$\chi_i$ lands inside the central support
    	$\Op_{\Lg}^\reg(D_{z_i})_{\varpi(-\lambda_i-\rho)}$
    	of Weyl module $\bV_{\lambda_i}=\Ind^{\hg}_{\fg[\![t]\!]+\bC\bone}V_{\lambda_i}$,
    	where $\lambda$ is a dominant integral weight of $\fg$,
    	$V_{\lambda_i}$ is the 
    	irreducible highest weight $\lambda_i$ representation of $\fg$.
    	
    	At $z_0=0$, 
    	by Lemma \ref{l:central support of toral sc repn}.(iii),
    	$\chi_0=\chi|_{D_0^\times}$ lands inside the central support
    	$\Op_{\Lg}(\fh,\tphi)$ of $\Vac_{\fj,\tphi}$
    	for every $\tphi\in h^{cl,-1}(\phi)$
    	where $\phi\in\Spec A$ corresponds to $\nabla_0$
    	under \eqref{eq:Op=Loc(X,nu)}.
    	
    	Let $\Loc$ be the localization functor as in \cite[\S3]{YiFG}.
    	Let $\omega_{\Bun_{\cG'}}^{-1/2}$ be a square root line bundle
    	of the canonical sheaf of $\Bun_{\cG'}$
    	which exists by \cite[Proposition 7]{Zhu}.
    	Consider
    	\begin{equation}\label{eq:A_chi}
    		\cA_\chi:=
    		\omega_{\Bun_{\cG'}}^{-1/2}\otimes\Loc(\Vac_{\fj,\tphi}\otimes_{\bC,i}\bV_{\lambda_i}/\ker\chi_i).
    	\end{equation}
        
        Note $\Vac_{\fj,\tphi}=\Vac_{\fj^+}/\ker\tphi$.
        By construction,
        $\cA_\chi$ is $(J,\tphi)$-equivariant
        that is Hecke eigen with eigenvalue $\nabla'$.
        It only remains to show that $\cA_\chi$
        is nonzero and holonomic,
        so that it is a nonzero complex of copies of the 
        unique irreducible $(J,\tphi)$-equivariant holonomic $D$-module
        in Proposition \ref{p:rigid datum}.
        
        Let $L^-G$ be the opposite loop group,
        then $\Bun_{\cG'}\simeq[L^-G\backslash LG/J^+]$.
        By \cite[Theorem 13, Proposition 19]{JKY},
        the unique relevant orbit 
        $j:O=[L^-G\backslash L^-G J/J^+]\hookrightarrow\Bun_{\cG'}$
        is locally closed.
        As in \S6.5 of \cite{CYTheta},
        $\cA_\chi\simeq j_*j^*\cA_\chi$ is holonomic.
        
        It remains to show $\cA_\chi\neq 0$.
        By \cite[Proposition 19]{JKY},
        we have open embedding
        $j':O'=[L^-G\backslash L^-G I(1)/J^+]\hookrightarrow\Bun_{\cG'}$. 
        To see $\cA_\chi\neq 0$, it suffices to show
        the associated graded module $\gr H^0(j'^*\cA_\chi)$ is nonzero.
        Although here $O'$
        is the quotient by an unipotent group on an affine space
        unlike being an affine space in \cite[Lemma 17]{CYTheta},
        since it suffices to show that at least one stalk is non-vanishing,
        we can still use the proof in \emph{loc. cit.} 
        to reduce the problem to the non -vanishing of the following module:
        \[
        \cO_{T^*O'}\otimes_{\Fun\Hit_{\cG'}\otimes_A\Fun\bj^*}\bC_0.
        \]
        Here $\Hit_{\cG'}:=\Hit(\bP^1-\{0\})\times_{\Hit(D_0^\times)}\Hit(D)_{\fj^+}$.
        By Proposition \ref{p:local Hitchin image},
        $\Hit_{\cG'}\simeq\bigoplus_{i=1}^n\Gamma(\cO(\lfloor\frac{d_i}{h}\rfloor))\simeq\bA^{n+1}$ is an affine space.
        The character $\bC_0$ corresponds to the unique $\bGm$-fixed point
        of $\Hit_{\cG'}\times_{\Spec(A)}\bj^*$.
        
        It remains to show that the image of composition
        \[
        \mu:T^*O'\hookrightarrow T^*\Bun_{\cG'}\rightarrow\bj^*
        \]
        contains $0$. 
        Note $O'=[U\backslash I(1)/J^+]$ where $U=L^-G\cap I(1)$.
        Let $\mu_U:T^*(I(1)/J^+)\rightarrow\fu^*=\Lie(U)^*$
        be the moment map.
        Then $T^*O'\simeq U\backslash\mu_U^{-1}(0)$.
        Note that $\mu_U$ factors through 
        the cotangent map $(I(1)/J^+)^*\rightarrow\fu^*$
        of group morphism $U\rightarrow I(1)/J^+$.
        Thus the quotient of the zero section of $T^*(I(1)/J^+)$ by $U$
        is contained in $T^*O'$
        and maps to $0$ via $(I(1)/J^+)^*\rightarrow\bj^*$.
        This completes the proof.
    \end{proof}

    \begin{rem}
    	For $\LG=\GL_n$, 
    	part (i) of Theorem \ref{t:Airy Langlands} has been proved in 
    	\cite[Corollary 36]{JKY}.
    	The physical rigidity of Airy connections for general $\LG$ and irregular types
    	has been proved in \cite[Theorem 5.5.2]{HJRigid},
    	and here we recover it only for 
    	those Airy connections corresponding to special $\tphi$. 
    	It is still unclear to us how to generalize the Airy automorphic datum
    	to cover all Airy connections in the eigenvalues.
    \end{rem}
    
    We confirm Conjecture 29 of \cite{JKY}.
    
    \begin{cor}
    	Let $(J,\tphi)$ be an Airy automorphic datum
    	where $\tphi$ is special.
    	Denote the eigenvalue $\LG$-connection
    	of the associated Hecke eigensheaf by
    	$\nabla_{\tphi}$.
    	\begin{itemize}
    		\item [(i)]
    		The irregularity of $\nabla_{\tphi}^\Ad$ at $0$ is $n(h+1)$.
    		
    		\item [(ii)]
    		The local differential Galois group $\cI_0\subset\LG$
    		of $\nabla_{\tphi}$ at $0$ satisfies
    		$\Lg^{\cI_0}=0$.
    		
    		\item [(iii)]
    		$\nabla_{\tphi}$ is cohomologically rigid.
    	\end{itemize}
    \end{cor}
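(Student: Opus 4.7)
The plan is to read off all three properties from the explicit description of $\nabla_{\tphi}$ supplied by Theorem \ref{t:Airy Langlands}: it is the Airy $\LG$-connection whose formal type $\nabla_0$ at $0$ is an irreducible isoclinic connection of slope $\nu=(1+h)/h$ with regular semisimple leading term $X\in\Lg_{-1-h}^{\rs}$, and which is regular with trivial monodromy at $\infty$ by Proposition \ref{p:global ext nu>=1}.(ii). Throughout, I set $\cT_X=C_{\LG}(X)$ with Lie algebra $\ct_X$ and use that $\dim\Lg=n+|\Phi|=n(h+1)$.

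For (i), I would observe that because $X$ is regular semisimple and the other coefficients in the canonical form \eqref{eq:isoclinic canonical form} all lie in $\ct_X$, the adjoint connection $\nabla_{\tphi}^{\Ad}|_{D_0^\times}$ respects the root space decomposition $\Lg=\ct_X\oplus\bigoplus_{\alpha\in\Phi(\Lg,\ct_X)}\Lg_\alpha$. It splits as the trivial connection on $\ct_X$ together with $|\Phi(\Lg,\ct_X)|$ rank one pieces, one per root, each of slope $\nu$ since $\alpha(X)\neq 0$. The total irregularity at $0$ is then $|\Phi|\cdot\nu=nh\cdot\frac{1+h}{h}=n(1+h)$.

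For (ii), I would invoke Lemma \ref{l:isoclinic} and the discussion around it: the image of $\cI_0$ in $\LG$ contains the connected torus $\cT_X$ together with a lift of an order $h$ regular elliptic element $w_h\in W(\LG,\cT_X)$. The $\cT_X$-invariants in $\Lg$ are exactly $\ct_X$, and the $w_h$-action on $\ct_X$ has no nonzero fixed vector by ellipticity. Combining the two yields $\Lg^{\cI_0}\subset\ct_X^{w_h}=0$.

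For (iii), I would apply the Euler--Poincar\'e formula to the middle extension $j_{!*}\Lg_{\nabla_{\tphi}^{\Ad}}$ along $j:\bP^1-\{0,\infty\}\hookrightarrow\bP^1$. Plugging in $|S|=2$, rank $n(h+1)$, $\mathrm{Irr}_0=n(h+1)$ from (i), $\mathrm{Irr}_\infty=0$, $\dim\Lg^{\cI_0}=0$ from (ii), and $\dim\Lg^{\cI_\infty}=\dim\Lg$, one gets $\chi(\bP^1,j_{!*}\Lg_{\nabla^{\Ad}})=0$. Moreover, (ii) rules out any $\cI_0$-invariant global section, so $H^0=0$; self-duality of the adjoint local system via the Killing form and Poincar\'e duality on $\bP^1$ then give $H^2=0$, whence $H^1=0$, which is the desired cohomological rigidity. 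No serious obstacle is expected here; the only step requiring a little care is identifying $H^0(\bP^1,j_{!*}\Lg_{\nabla^{\Ad}})$ with the global flat sections that extend across each puncture, which is standard bookkeeping.
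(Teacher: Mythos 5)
Your proposal is correct, but it takes a genuinely different route from the paper. The paper's proof is a two-line citation: having established via Theorem \ref{t:Airy Langlands} that $\nabla_{\tphi}$ is an Airy connection, it simply quotes \cite[Lemma 6.2, Proposition 6.4]{JYDeligneSimpson}, where (i)--(iii) are proved for arbitrary Airy connections. You instead reprove these facts from scratch using the explicit canonical form: for (i) the root-space decomposition of $\nabla^{\Ad}$ over $\overline{F}$ into $\ct_X$ (slope $0$) and $|\Phi|$ lines of slope $\nu=\frac{1+h}{h}$, giving irregularity $|\Phi|\cdot\nu=n(h+1)$ (this matches the paper's own unnumbered lemma on slopes of adjoint connections); for (ii) the description of the differential Galois group from Lemma \ref{l:isoclinic} together with ellipticity of the order-$h$ regular element; and for (iii) the Euler--Poincar\'e computation combined with Proposition \ref{p:global ext nu>=1}.(ii) at $\infty$. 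All three steps are sound, and the self-contained argument is arguably more informative than the citation. One small imprecision: the image of $\cI_0$ need not contain all of $\cT_X$, only the smallest subtorus whose Lie algebra contains the coefficients $X_i$ of the canonical form; but since $X$ itself lies in that Lie algebra, one still gets $\Lg^{\cI_0}\subset\fz_{\Lg}(X)=\ct_X$, and the rest of your argument for (ii) goes through unchanged.
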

    \begin{proof}
    	We know from Theorem \ref{t:Airy Langlands} that
    	$\nabla_{\tphi}$ is an Airy connections.
    	For Airy connections, 
    	(i)-(iii) are proved in 
    	\cite[Lemma 6.2, Proposition 6.4]{JYDeligneSimpson}.    	
    \end{proof}

	\begin{bibdiv}
		\begin{biblist}

			\bib{Adler}{article}
			{
				AUTHOR = {Adler, J.},
				TITLE = {Refined anisotropic {$K$}-types and supercuspidal
					representations},
				JOURNAL = {Pacific J. Math.},
				FJOURNAL = {Pacific Journal of Mathematics},
				VOLUME = {185},
				YEAR = {1998},
				NUMBER = {1},
				PAGES = {1--32},
			}

			\bib{Arinkin}{article}
			{
				title={Irreducible connections admit generic oper structures}, 
				author={Arinkin, D.},
				year={2016},
				eprint={1602.08989},
				archivePrefix={arXiv},
				primaryClass={math.AG}
			}

			\bib{BV}{article}
			{
				AUTHOR = {Babbitt, D. G.},
				Author = {Varadarajan, V. S.},
				TITLE = {Formal reduction theory of meromorphic differential equations:
					a group theoretic view},
				JOURNAL = {Pacific J. Math.},
				FJOURNAL = {Pacific Journal of Mathematics},
				VOLUME = {109},
				YEAR = {1983},
				NUMBER = {1},
				PAGES = {1--80},
			}

			\bib{BD}{article}
			{
				AUTHOR = {Beilinson, A.},
				Author = {Drinfeld, V.},
				TITLE  = {Quantization of Hitchin's integrable system and Hecke eigensheaves},
				Note = {\url{https://www.math.uchicago.edu/\textasciitilde mitya/langlands/hitchin/BD-hitchin.pdf}},
				Year={1997},
			}

		    \bib{BBMY}{article}
		    {
		    	AUTHOR = {Bezrukavnikov, R.},
		    	Author = {Boixeda Alvarez, P.},
		    	Author = {McBreen, M.},
		    	Author = {Yun, Z.},
		    	TITLE = {Non-abelian {H}odge moduli spaces and homogeneous affine
		    		{S}pringer fibers},
		    	JOURNAL = {Pure Appl. Math. Q.},
		    	FJOURNAL = {Pure and Applied Mathematics Quarterly},
		    	VOLUME = {21},
		    	YEAR = {2024},
		    	NUMBER = {1},
		    	PAGES = {61--130},
		    	ISSN = {1558-8599,1558-8602},
		    	MRCLASS = {14},
		    	MRNUMBER = {4847236},
		    	DOI = {10.4310/pamq.241203031355},
		    	URL = {https://doi.org/10.4310/pamq.241203031355},
		    }

		   \bib{CK}{article}
		   {
		   	AUTHOR = {Chen, T.},
		   	Author = {Kamgarpour, M.},
		   	TITLE = {Preservation of depth in the local geometric {L}anglands
		   		correspondence},
		   	JOURNAL = {Trans. Amer. Math. Soc.},
		   	FJOURNAL = {Transactions of the American Mathematical Society},
		   	VOLUME = {369},
		   	YEAR = {2017},
		   	NUMBER = {2},
		   	PAGES = {1345--1364},
		   }

		   \bib{CYTheta}{article}
		   {
		   	title={Geometric Langlands for Irregular Theta Connections and Epipelagic Representations}, 
		   	author={Chen, T.-H.},
		   	author={Yi, L.},
		   	year={2024},
		   	eprint={2407.20593},
		   	archivePrefix={arXiv},
		   	primaryClass={math.RT},
		   	url={https://arxiv.org/abs/2407.20593}, 
		   }

			\bib{FrenkelBook}{article}
			{
				AUTHOR = {Frenkel, E.},
				TITLE = {Langlands correspondence for loop groups},
				SERIES = {Cambridge Studies in Advanced Mathematics},
				VOLUME = {103},
				PUBLISHER = {Cambridge University Press, Cambridge},
				YEAR = {2007},
				PAGES = {xvi+379},
			}

			\bib{FGLocal}{article}
			{
				AUTHOR = {Frenkel, E.},
				Author = {Gaitsgory, D.},
				TITLE = {Local geometric {L}anglands correspondence and affine
					{K}ac-{M}oody algebras},
				BOOKTITLE = {Algebraic geometry and number theory},
				SERIES = {Progr. Math.},
				VOLUME = {253},
				PAGES = {69--260},
				PUBLISHER = {Birkh\"auser Boston, Boston, MA},
				YEAR = {2006},
				ISBN = {978-0-8176-4471-0; 0-8176-4471-7},
			}

		    \bib{FGSpherical}{article}
		    {
		    	AUTHOR = {Frenkel, E.},
		    	Author = {Gaitsgory, D.},
		    	TITLE = {Local geometric {L}anglands correspondence: the spherical case},
		    	BOOKTITLE = {Algebraic analysis and around},
		    	SERIES = {Adv. Stud. Pure Math.},
		    	VOLUME = {54},
		    	PAGES = {167--186},
		    	PUBLISHER = {Math. Soc. Japan, Tokyo},
		    	YEAR = {2009},
		    	ISBN = {978-4-931469-51-8},
		    	MRCLASS = {22E57},
		    	MRNUMBER = {2499556},
		    	MRREVIEWER = {Peter\ Fiebig},
		    	DOI = {10.2969/aspm/05410167},
		    	URL = {https://doi.org/10.2969/aspm/05410167},
		    }
		    
		    \bib{FGFlag}{article}
		    {
		    	AUTHOR = {Frenkel, E.},
		    	Author = {Gaitsgory, D.},
		    	TITLE = {{$D$}-modules on the affine flag variety and representations
		    		of affine {K}ac-{M}oody algebras},
		    	JOURNAL = {Represent. Theory},
		    	FJOURNAL = {Representation Theory. An Electronic Journal of the American
		    		Mathematical Society},
		    	VOLUME = {13},
		    	YEAR = {2009},
		    	PAGES = {470--608},
		    	ISSN = {1088-4165},
		    	MRCLASS = {17B67 (13N10 14F05 14F10 17B10)},
		    	MRNUMBER = {2558786},
		    	MRREVIEWER = {Guy\ Rousseau},
		    	DOI = {10.1090/S1088-4165-09-00360-4},
		    	URL = {https://doi.org/10.1090/S1088-4165-09-00360-4},
		    }

			\bib{FZOper}{article}
			{
				AUTHOR = {Frenkel, E.},
				Author = {Zhu, X.},
				TITLE = {Any flat bundle on a punctured disc has an oper structure},
				JOURNAL = {Math. Res. Lett.},
				FJOURNAL = {Mathematical Research Letters},
				VOLUME = {17},
				YEAR = {2010},
				NUMBER = {1},
				PAGES = {27--37},
			}

		    \bib{FintzenTame}{article}
		    {
		    	AUTHOR = {Fintzen, J.},
		    	TITLE = {On the construction of tame supercuspidal representations},
		    	JOURNAL = {Compos. Math.},
		    	FJOURNAL = {Compositio Mathematica},
		    	VOLUME = {157},
		    	YEAR = {2021},
		    	NUMBER = {12},
		    	PAGES = {2733--2746},
		    }

		    \bib{FintzenType}{article}
		    {
		    	AUTHOR = {Fintzen, J.},
		    	TITLE = {Types for tame {$p$}-adic groups},
		    	JOURNAL = {Ann. of Math. (2)},
		    	FJOURNAL = {Annals of Mathematics. Second Series},
		    	VOLUME = {193},
		    	YEAR = {2021},
		    	NUMBER = {1},
		    	PAGES = {303--346},
		    }

		    \bib{FintzenIHES}{article}
		    {
		    	AUTHOR = {Fintzen, J.},
		    	TITLE  = {Supercuspidal representations: construction, classification, and characters},
		    	Note = {\url{https://www.math.uni-bonn.de/people/fintzen/IHES_Fintzen.pdf}},
		    	Year={2022},
		    }

		    \bib{HM}{article}
		    {
		    	AUTHOR = {Hakim, J.},
		    	Author = {Murnaghan, F.},
		    	TITLE = {Distinguished tame supercuspidal representations},
		    	JOURNAL = {Int. Math. Res. Pap. IMRP},
		    	FJOURNAL = {International Mathematics Research Papers. IMRP},
		    	YEAR = {2008},
		    	NUMBER = {2},
		    	PAGES = {Art. ID rpn005, 166},
		    	ISSN = {1687-3017,1687-3009},
		    	MRCLASS = {22E50},
		    	MRNUMBER = {2431732},
		    	MRREVIEWER = {Shaun\ A. R. Stevens},
		    }

			\bib{HNY}{article}
			{
				Author = {Heinloth, J.},
				Author = {Ng\^{o}, B. C.},
				Author = {Yun, Z.},
				Title={Kloosterman sheaves for reductive groups}, 
				Year={2013}, 
				Journal={Ann. of Math. (2)},
				Volume={177},
				Pages={241--310},
			}

			\bib{HJRigid}{article}
			{
				title={Stokes phenomenon of Kloosterman and Airy connections}, 
				author={Hohl, A.},
				author={Jakob, K.},
				year={2024},
				eprint={2404.09582},
				archivePrefix={arXiv},
				primaryClass={math.AG},
				url={https://arxiv.org/abs/2404.09582}, 
			}

			\bib{JKY}{article}
			{
				AUTHOR = {Jakob, K.},
				Author = {Kamgarpour, M.},
				Author = {Yi, L.},
				TITLE = {Airy sheaves for reductive groups},
				JOURNAL = {Proc. Lond. Math. Soc. (3)},
				FJOURNAL = {Proceedings of the London Mathematical Society. Third Series},
				VOLUME = {126},
				YEAR = {2023},
				NUMBER = {1},
				PAGES = {390--428},
				ISSN = {0024-6115,1460-244X},
				MRCLASS = {14D24 (20G25 22E50 22E67)},
				MRNUMBER = {4535023},
				MRREVIEWER = {Jorge\ A.\ Vargas},
				DOI = {10.1112/plms.12494},
				URL = {https://doi.org/10.1112/plms.12494},
			}

			\bib{JYDeligneSimpson}{article}
			{
				title={A Deligne-Simpson problem for irregular $G$-connections over $\mathbb{P}^{1}$}, 
				author={Jakob, K.},
				Author={Yun, Z.},
				year={2023},
				eprint={2301.10967},
				archivePrefix={arXiv},
				primaryClass={math.AG},
				url={https://arxiv.org/abs/2301.10967}, 
			}

			\bib{KalehtaSimple}{article}
			{
				AUTHOR = {Kaletha, T.},
				TITLE = {Simple wild {$L$}-packets},
				JOURNAL = {J. Inst. Math. Jussieu},
				FJOURNAL = {Journal of the Institute of Mathematics of Jussieu. JIMJ.
					Journal de l'Institut de Math\'ematiques de Jussieu},
				VOLUME = {12},
				YEAR = {2013},
				NUMBER = {1},
				PAGES = {43--75},
				ISSN = {1474-7480,1475-3030},
				MRCLASS = {22E50 (11F70 11S37 20G25)},
				MRNUMBER = {3001735},
				MRREVIEWER = {Erez\ M.\ Lapid},
				DOI = {10.1017/S1474748012000631},
				URL = {https://doi.org/10.1017/S1474748012000631},
			}

		    \bib{KalethaEpipelagic}{article}
		    {
		    	AUTHOR = {Kaletha, T.},
		    	TITLE = {Epipelagic {$L$}-packets and rectifying characters},
		    	JOURNAL = {Invent. Math.},
		    	FJOURNAL = {Inventiones Mathematicae},
		    	VOLUME = {202},
		    	YEAR = {2015},
		    	NUMBER = {1},
		    	PAGES = {1--89},
		    	ISSN = {0020-9910,1432-1297},
		    	MRCLASS = {11S37 (11F66 22E50)},
		    	MRNUMBER = {3402796},
		    	MRREVIEWER = {Anton\ Deitmar},
		    	DOI = {10.1007/s00222-014-0566-4},
		    	URL = {https://doi.org/10.1007/s00222-014-0566-4},
		    }

			\bib{KalethaRegular}{article}
			{
				AUTHOR = {Kaletha, T.},
				TITLE = {Epipelagic {$L$}-packets and rectifying characters},
				JOURNAL = {J. Amer. Math. Soc.},
				FJOURNAL = {Journal of the American Mathematical Society},
				VOLUME = {32},
				YEAR = {2019},
				NUMBER = {4},
				PAGES = {1071--1170},
			}
		
		    \bib{KalethaPacket}{article}
		    {
		    	title={Supercuspidal L-packets}, 
		    	author={Kaletha, T.},
		    	year={2021},
		    	eprint={1912.03274},
		    	archivePrefix={arXiv},
		    	primaryClass={math.RT},
		    	url={https://arxiv.org/abs/1912.03274}, 
		    }

			\bib{KSRigid}{article}
			{
				AUTHOR = {Kamgarpour, M.},
				Author = {Sage, D.S.},
				TITLE = {Rigid connections on {$\Bbb P^1$} via the {B}ruhat-{T}its
					building},
				JOURNAL = {Proc. Lond. Math. Soc. (3)},
				FJOURNAL = {Proceedings of the London Mathematical Society. Third Series},
				VOLUME = {122},
				YEAR = {2021},
				NUMBER = {3},
				PAGES = {359--376},
				ISSN = {0024-6115,1460-244X},
				MRCLASS = {14D24 (14D05 20G25 22E67)},
				MRNUMBER = {4230058},
				MRREVIEWER = {Walter\ D.\ Freyn},
				DOI = {10.1112/plms.12346},
				URL = {https://doi.org/10.1112/plms.12346},
			}

		    \bib{KSMon}{article}
		    {
		    	title={Connections whose differential Galois groups are reductive of maximal degree}, 
		    	AUTHOR = {Kamgarpour, M.},
		    	Author = {Sage, D.S.},
		    	year={2023},
		    	eprint={2309.11742},
		    	archivePrefix={arXiv},
		    	primaryClass={math.AG},
		    	url={https://arxiv.org/abs/2309.11742}, 
		    }

			\bib{KXY}{article}
			{
				title={Hypergeometric sheaves for classical groups via geometric Langlands}, 
				author={Kamgarpour, M.},
				Author={Xu, D.},
				Author={Yi, L.},
				year={2022},
				eprint={2201.08063},
				archivePrefix={arXiv},
				primaryClass={math.AG}
			}

			\bib{Kim}{article}
			{
				AUTHOR = {Kim, J.-L.},
				TITLE = {Supercuspidal representations: an exhaustion theorem},
				JOURNAL = {J. Amer. Math. Soc.},
				FJOURNAL = {Journal of the American Mathematical Society},
				VOLUME = {20},
				YEAR = {2007},
				NUMBER = {2},
				PAGES = {273--320},
				ISSN = {0894-0347,1088-6834},
				MRCLASS = {22E50 (20G25 22E35)},
				MRNUMBER = {2276772},
				MRREVIEWER = {U.\ K.\ Anandavardhanan},
				DOI = {10.1090/S0894-0347-06-00544-3},
				URL = {https://doi.org/10.1090/S0894-0347-06-00544-3},
			}

			\bib{RY}{article}
			{
				Author = {Reeder, M.},
				Author = {Yu, J.},
				TITLE = {Epipelagic representations and invariant theory},
				JOURNAL = {J. Amer. Math. Soc.},
				FJOURNAL = {Journal of the American Mathematical Society},
				VOLUME = {27},
				YEAR = {2014},
				NUMBER = {2},
				PAGES = {437--477},
			}

			\bib{RLYG}{article}
			{
				AUTHOR= {Reeder, M.},
				Author={Levy, P.},
				Author={Yu, J.},
				Author={Gross, B.},
				TITLE = {Gradings of positive rank on simple {L}ie algebras},
				JOURNAL = {Transform. Groups},
				FJOURNAL = {Transformation Groups},
				VOLUME = {17},
				YEAR = {2012},
				NUMBER = {4},
				PAGES = {1123--1190},	
			}

			\bib{OY}{article}
			{
				AUTHOR = {Oblomkov, A.},
				Author = {Yun, Z.},
				TITLE = {Geometric representations of graded and rational {C}herednik
					algebras},
				JOURNAL = {Adv. Math.},
				FJOURNAL = {Advances in Mathematics},
				VOLUME = {292},
				YEAR = {2016},
				PAGES = {601--706},
			}

			\bib{Panyushev}{article}
			{
				AUTHOR = {Panyushev, D. I.},
				TITLE = {On invariant theory of $\theta$-groups},
				JOURNAL = {Journal of Algebra},
				FJOURNAL = {Journal of Algebra},
				volume={283},
				number={2},
				pages={655--670},
				year={2005},
			}

			\bib{Vinberg}{article} 
			{ 
				AUTHOR = {Vinberg, E. B.},
				TITLE = {The {W}eyl group of a graded {L}ie algebra},
				JOURNAL = {Izv. Akad. Nauk SSSR Ser. Mat.},
				FJOURNAL = {Izvestiya Akademii Nauk SSSR. Seriya Matematicheskaya},
				VOLUME = {40},
				YEAR = {1976},
				NUMBER = {3},
				PAGES = {488--526, 709},
			}

			\bib{YiFG}{article}
			{
				TITLE = {On the physical rigidity of {F}renkel-{G}ross connection},
				author={Yi, L.},
			JOURNAL = {Selecta Math. (N.S.)},
			FJOURNAL = {Selecta Mathematica. New Series},
			VOLUME = {30},
			YEAR = {2024},
			NUMBER = {3},
			PAGES = {Paper No. 41, 23},
		    }

			\bib{Yu}{article}
			{
				AUTHOR = {Yu, J.-K.},
				TITLE = {Construction of tame supercuspidal representations},
				JOURNAL = {J. Amer. Math. Soc.},
				FJOURNAL = {Journal of the American Mathematical Society},
				VOLUME = {14},
				YEAR = {2001},
				NUMBER = {3},
				PAGES = {579--622},
			}

		    \bib{YunEpipelagic}{article}
		    {
		    	AUTHOR = {Yun, Z.},
		    	TITLE = {Epipelagic representations and rigid local systems},
		    	JOURNAL = {Selecta Math. (N.S.)},
		    	FJOURNAL = {Selecta Mathematica. New Series},
		    	VOLUME = {22},
		    	YEAR = {2016},
		    	NUMBER = {3},
		    	PAGES = {1195--1243},
		    	ISSN = {1022-1824,1420-9020},
		    	MRCLASS = {22E55 (11L05 14D24 20G44 22E50 22E57)},
		    	MRNUMBER = {3518549},
		    	MRREVIEWER = {Dongwen\ Liu},
		    	DOI = {10.1007/s00029-015-0204-z},
		    	URL = {https://doi.org/10.1007/s00029-015-0204-z},
		    }

			\bib{Zhu}{article}
			{
				AUTHOR = {Zhu, X.},
				TITLE = {Frenkel-{G}ross' irregular connection and
					{H}einloth-{N}g\^{o}-{Y}un's are the same},
				JOURNAL = {Selecta Math. (N.S.)},
				FJOURNAL = {Selecta Mathematica. New Series},
				VOLUME = {23},
				YEAR = {2017},
				NUMBER = {1},
				PAGES = {245--274},
			}
			
		\end{biblist}
	\end{bibdiv} 
	
\end{document}